\newcommand{\R}{\mathbb{R}}
\newcommand{\N}{\mathbb{N}}
\newcommand{\G}{\mathbb{G}}
\newcommand{\cS}{\mathcal{S}}
\newtheorem{theorem}{Theorem}
\newtheorem{corollary}[theorem]{Corollary}
\newtheorem{lemma}[theorem]{Lemma}
\newtheorem{proposition}[theorem]{Proposition}
\theoremstyle{definition}
\newtheorem{definition}[theorem]{Definition}
\newtheorem{remark}[theorem]{Remark}
\numberwithin{theorem}{section}
\numberwithin{equation}{section}
\newcommand{\dd}{\; \mathrm{d}}
\begin{document}

\title[Function Spaces via Fractional Poisson Kernel]{Function Spaces via Fractional Poisson Kernel on Carnot Groups and Applications}

\date{\today}

\author[Ali Maalaoui]{Ali Maalaoui}
\address[Ali Maalaoui]{Department of Mathematics and Natural Sciences, American University of Ras Al Khaimah, PO Box 10021, Ras Al Khaimah, UAE}
\email[Ali Maalaoui]{ali.maalaoui@aurak.ac.ae}

\author[Andrea Pinamonti]{Andrea Pinamonti}
\address[Andrea Pinamonti]{Department of Mathematics, University of Trento, Via Sommarive 14, 38123 Povo (Trento), Italy}
\email[Andrea Pinamonti]{Andrea.Pinamonti@unitn.it}

\author[Gareth Speight]{Gareth Speight}
\address[Gareth Speight]{Department of Mathematical Sciences, University of Cincinnati, 2815 Commons Way, Cincinnati, OH 45221, United States}
\email[Gareth Speight]{Gareth.Speight@uc.edu}

\begin{abstract}
We provide a new characterization of homogeneous Besov and Sobolev spaces in Carnot groups using the fractional heat kernel and Poisson kernel. We apply our results to study commutators involving fractional powers of the sub-Laplacian.
\end{abstract}

\maketitle

\section{Introduction}

Besov and Sobolev spaces measure regularity of functions and are of central importance in the study of PDEs. There has been much work on these spaces and their characterization in different settings. Such alternative characterizations provide flexibility for applications. In this work, we give a new characterization of homogeneous Besov and Sobolev spaces in Carnot groups using the fractional heat kernel and Poisson kernel. We use this to study commutators involving fractional powers of the sub-Laplacian.


A Carnot group is a Lie group whose Lie algebra admits a stratification. This decomposes the Lie algebra as a direct sum of vector subspaces, the first of which is called the horizontal layer and generates the other subspaces via Lie brackets. Carnot groups have a rich geometric structure adapted to the horizontal layer, including translations, dilations, Carnot-Carath\'{e}odory (CC) distance, and a Haar measure \cite{ABB, CDPT07, Gro96, Mon02}. Carnot groups have been studied in contexts such as differential geometry \cite{CDPT07}, subelliptic differential equations \cite{Bon, Fol1, Fol2}, real and complex analysis \cite{S,PS16,PSZ19}. For an introduction to Carnot groups from the point of view of this paper and for further examples, we refer to \cite{Bon, Fol1, S}.

In the Euclidean case there have been characterizations of Besov and Sobolev spaces using multiple tools, mostly relying on the Fourier transform and Littlewood-Paley decompositions \cite{candy, Bu1,Bu2}. In Carnot groups there have been a few characterizations of such spaces, for instance using the heat kernel \cite{Saka} and a spectral multiplier version of Besov spaces \cite{GY}. We also mention the use of a Littlewood-Paley decomposition in the study of the phase space in the Heisenberg group \cite{Ba1,Ba}. This uses the Fourier transform in that setting. We also point out the extension of the characterization in \cite{Saka} to the case of metric measure spaces with heat kernels satisfying a Gaussian bound, \cite{CG}. As we will see later on, the kernels that we will be using do not satisfy this bound.

The heat kernel and fractional heat kernel in Carnot groups have been studied for some time, e.g. see \cite{Fer} and the references therein. The Poisson kernel in Carnot groups was introduced and studied in \cite{FS}, but the fractional one is a recent discovery. It was first introduced and studied in \cite{Fran} to exhibit a Harnack type estimate for the fractional Laplacian. The method of construction follows the classical one introduced by Caffarelli and Silvestre in \cite{CS}, but here using the spectral resolution of the sub-Laplacian. We also point out that there is also another construction for a different fractional Poisson kernel in the Heisenberg group for the conformal fractional sub-Laplacian in \cite{FGMT} and the construction relies mainly on the Fourier transform.

In this paper we start by defining a norm using the fractional heat kernel which ends up being equivalent to the classical homogeneous Besov norm as stated in Theorem \ref{pp}. This procedure is close to the one of \cite{Saka} and it does rely partially on the semi-group property of the fractional heat kernel. Next, we study different properties of the fractional Poisson kernel, allowing us, as stated in Theorem \ref{Oct31}, to provide different equivalent norms to the classical homogeneous Besov spaces. The main challenge in this procedure is to bypass the use of the Fourier transform and still keep certain harmonic analysis properties of the different kernel we are considering. Also, in the same spirit, in Theorem \ref{Oct23}, we provide a lower bound for the fractional Sobolev norms using a square function type quantity involving the convolution with the Poisson kernel and we finish in Proposition \ref{bmo}, by providing a characterization of the BMO norm.

Concerning applications of our results, the second characterization that we provide for Besov and Sobolev spaces appears to be well suited to the study of commutators involving fractional powers of the sub-Laplacian. We recall that, in \cite{M}, the first author provided a family of estimates for the commutator of the fractional sub-Laplacian using a more direct approach in estimating the singular kernel of the operator. In this work we provide an extended result, which generalizes many classical commutator estimates known in the Euclidean setting to the case of Carnot groups. For instance, in Theorems \ref{lp} and \ref{riv} we provide bilinear type estimates for three terms commutators involving the fractional sub-Laplacian. In fact the first result (namely Theorem \ref{lp}) provides $L^{p}$ type estimates and the second result deals with the borderline setting of bounding the Hardy norm. Also, in Theorem \ref{chan}, we provide a proof of the Chanillo type commutator estimates for the Carnot group setting.  We follow closely the ideas provided in the Euclidean setting \cite{lenz} to use the fractional Poisson kernel to simplify the expressions of the commutators. But we point out that in the Euclidean case, the estimates and characterizations of the different spaces was established separately in \cite{candy}. This is why, in our case, we first have to cross the difficulty of characterizing these spaces.

In general, commutator estimates are a fundamental tool in the study of the regularity of PDE, especially in the fractional setting. For instance, in Carnot groups, \cite{M} gives applications to the study of the regularity and decay of solutions to the fractional CR-Yamabe problem, while \cite{MMC} characterizes the asymptotic profile decomposition of Palais-Smale sequences for the same problem. In the Euclidean setting one has even more applications of commutator estimates \cite{Ken,Schik, DR1,DR2}. 

The structure of the paper is as follows. 

In Section \ref{preliminaries} we provide the necessary preliminaries on the structure of Carnot groups, the sub-Laplacian and the heat kernel. 

In Section \ref{fractionalheat} we provide a characterization of Besov spaces using the fractional heat kernel. This is the kernel of the flow generated by the fractional power of the sub-Laplacian. The proof in this section follows the approach in \cite{Saka}, where an analogous characterization of Besov spaces was obtained using the standard (non-fractional) heat kernel and Poisson kernel.

In Section \ref{fractionalpoisson} and Section \ref{bounds} we move to the characterization of Besov, Sobolev and BMO spaces using the fractional Poisson kernel. Here we generalize ideas in the Euclidean setting and avoid notions involving the Fourier transform because that is a tool that we cannot afford in Carnot groups in general. 

In Section \ref{applications} we provide several applications of our results to estimates for commutators of fractional powers of the sub-Laplacian. Such estimates were established and studied in the Euclidean setting in \cite{Chan,DR1,DR2,lenz,Ken,Schik}.

\vspace{.5cm}

\textbf{Acknowledgements:} A. Pinamonti is a member of {\em Gruppo Nazionale per l'Analisi Ma\-te\-ma\-ti\-ca, la Probabilit\`a e le loro Applicazioni} (GNAMPA) of the {\em Istituto Nazionale di Alta Matematica} (INdAM). 

This work was supported by a grant from the Simons Foundation (\#576219, G. Speight).

\section{Preliminaries}\label{preliminaries}

\subsection{Carnot Groups}

\begin{definition}
A connected and simply connected Lie group $(\G,\cdot)$ is a {\it Carnot group  of
step $k$} if its Lie algebra ${\mathfrak{g}}$  admits a {\it step $k$ stratification}. This means that
there exist non-trivial linear subspaces $V_1,\dots ,V_k$ of $\mathfrak{g}$ such that
\begin{equation}\label{stratificazione}
{\mathfrak{g}}=V_1\oplus \dots \oplus V_k
\end{equation}
where $[V_1,V_i]=V_{i+1}$ for $1\leq i<k$ and $[V_{1},V_{k}]=\{0\}$. Here $[V_1,V_i]$ is the subspace of ${\mathfrak{g}}$ generated by
the commutators $[X,Y]$ with $X\in V_1$ and $Y\in V_i$. 
\end{definition}

Let $m_i=\dim(V_i)$ for $i=1,\dots,k$. Define $h_0=0$ and $h_i=m_1+\dots +m_i$ for $i=1,\ldots, k$. We also use the notation $n:=h_k$ and $m:=m_1$.
The {\em homogeneous dimension} of $\G$ is then defined by $Q:=\sum_{i=1}^{k} i \dim(V_i)$. 

Choose a family of left invariant vector fields $X=\{X_1,\dots,X_{n}\}$ adapted to the stratification of $\mathfrak g$, i.e. such that $X_{h_{j-1}+1},\dots, X_{h_j}$ is a basis of $V_j$  for each $j=1,\dots, k$. This identifies $\mathfrak{g}$ with $\mathbb{R}^{n}$. Using exponential coordinates of the first kind we identify $\G$ with $\mathfrak{g}$ and hence with $\mathbb{R}^{n}$. With these coordinates, $X_i(0)=e_i$ for $i=1,\dots,n$. 

\begin{definition}
The sub-bundle of the tangent bundle $T\G$ that is spanned by the
vector fields $X_1,\dots,X_{m}$ plays a particularly important
role in the theory. It is called the {\em horizontal bundle}
$H\G$. The fibers of $H\G$ are 
\[ 
H_x\G=\mbox{span}\{X_1(x),\dots,X_{m}(x)\},\qquad x\in\G .
\] 
We can endow each fiber of $H\G$ with a corresponding inner 
product $\left\langle \cdot, \cdot\right\rangle$ and with a norm
$\vert\cdot\vert$ that make the basis $X_1(x),\dots,X_{m}(x)$
an orthonormal basis. 
The sections of $H\G$ are called {\em horizontal
sections} and a vector of $H_x\G$ a {\em horizontal vector}. 
Each horizontal section is identified by its canonical coordinates
with respect to this moving frame $X_1(x),\dots,X_{m}(x)$. This
way, a horizontal section $\phi$ is identified with a function
$\phi =(\phi_1,\dots,\phi_{m}) :\R^n \rightarrow \R^{m}$.
\end{definition}

\begin{definition}
For any $x\in\G$, the {\em left translation} $\tau_x:\G\to\G$ is defined by $\tau_x z=xz$.

For any $\lambda >0$, the {\em dilation} $\delta_\lambda:\G\to\G$, is defined as
\begin{equation}\label{dilatazioni}
\delta_\lambda(x)=
(\lambda \xi_1,\dots ,\lambda^k\xi_k),
\end{equation} 
where $x=(\xi_1,\dots,\xi_k)\in\R^{m_1}\times\dots \times \R^{m_k}\equiv\G$.

The \emph{Haar measure} of $\G=(\R^n,\cdot)$ is the Lebesgue measure
in $\R^n$. If $A\subset \G$ is Lebesgue measurable, we
write $|A|$ to denote its Lebesgue measure.
\end{definition}

Let $|\cdot|:\G\to [0,\infty)$ denote a \emph{symmetric homogeneous norm} on $\G$ \cite{Bon}, meaning:
\begin{itemize}
\item $|\cdot|$ is continuous,
\item $|\delta_{\lambda}(x)|=\lambda |x|$ for every $\lambda>0$,
\item $|x^{-1}|=|x|$.
\end{itemize}

Note that any two continuous homogeneous norms are equivalent, i.e. within constant multiplicative constant factors of each other. All the estimates we give are the same if the norm is changed up to changes in constants. We denote the ball centered at a point $x\in \G$ with radius $r>0$ by
\[
B(x,r)=\{y\in \G: | y^{-1} x| < r\}.
\]
We denote balls centered at the identity $0$ by $B(r)=B(0,r)$. 

We shall write $f(x)\lesssim g(x)$ if there exists a constant $C$ such that $f(x)\leq C g(x)$ for all $x\in \G$, or equivalently, $\sup_{x\in G}\frac{f(x)}{g(x)}\leq C <\infty$.
Similarly we shall write $f(x)\approx g(x)$ if $f(x) \lesssim g(x)$ and $g(x)\lesssim f(x)$.

\begin{definition}
Suppose $f:\G\to \R$ is a function for which $X_{j} f$ exists for $1\leq j\leq m$. Then we define the \emph{horizontal gradient} of $f$ as the horizontal section whose coordinates are $(X_1f, \dots ,X_{m}f)$:
\begin{equation*}
\nabla_{\G}f:=\sum_{i=1}^{m}(X_if)X_i.
\end{equation*}
We denote by $\Delta_b$ the \emph{positive sub-Laplacian} defined by
\[
\Delta_{b}f:= \sum_{j=1}^m X_jX_j f
\]
whenever $f$ is a function such that $X_jX_j f$ exists for $1\leq j\leq m$.
\end{definition}

If $\Omega \subset \G$ is an open set, we define $C^{\infty}(\Omega)$ as in the classical case when $\Omega$ is a subset of $\R^n$. We will use the inequality
\[\|f(\cdot y)-f(\cdot)\|_{L^{1}}\lesssim |y|\|\nabla_{\G}f\|_{L^{1}}\]
for all $y\in \G$ and sufficiently smooth $f\colon \G \to \R$. This is a consequence of the Fundamental Theorem of Calculus.


\subsection{Heat Kernel}

For every multi-index $\beta=(\beta_1,\ldots,\beta_n)\in \mathbb{N}^n$, we denote
\[|\beta|=\beta_1+\cdots+\beta_n\]
and
\[D^\beta=X_{1}^{\beta_{1}}\cdots X_{n}^{\beta_{n}}\]
where
\[
X_i^{\beta_{i}}=\underbrace{X_iX_i\cdots X_i}_{\beta_i-times}.
\]
We also use the notation $(\partial/\partial x_{i})^{\beta}$ or $\partial^{\beta}$ to denote differentiation with respect to the standard basis of $\R^n$. The Schwartz space and space of distributions are defined as in the classical setting, which we now briefly recall.

\begin{definition}
We define the \emph{Schwartz space} $\mathcal{S}(\mathbb{G})$ by identification of $\G$ with $\R^{n}$:
\begin{align*}
\cS(\G)=& \{ \phi \in C^{\infty}(\G)\colon P(\partial/\partial x_{i})^{\beta}\phi \quad \mbox{is bounded on }\G \\
& \qquad \mbox{ for every polynomial }P \mbox{ and every multi-index }\beta\}.
\end{align*}
We equip $\cS(\G)$ with the following seminorms for multi-indices $\alpha, \beta\in\mathbb{N}^n$:
\[\|\phi\|_{\alpha,\beta}=\sup_{x\in \G} |x^{\alpha}D^{\beta}\phi|.\]
\end{definition}

The convolution of two functions $f,g:\G\to \R$ is defined whenever it makes sense by
\[ (f\ast g)(x)=\int_{\R^n} f(xy^{-1})g(y) \dd y = \int_{\R^n}g(y^{-1} x)f(y)\dd y.\]

\begin{definition}
The continuous dual of $\cS(\G)$ with the family of seminorms $\|\cdot \|_{\alpha, \beta}$ is the space of \emph{distributions} on $\G$, denoted $\cS'(\G)$. 

The action of a distribution $f$ on a Schwartz function $\phi$ is denoted $\langle f, \phi \rangle$. The \emph{convolution} $f\ast \phi$ of $f$ and $\phi$ is defined by $(f\ast \phi)(x)=\langle f, \tilde{\phi}\rangle$ where $\tilde{\phi}(y)=\phi(y^{-1}x)$. If $\alpha$ is a multi-index, the \emph{derivative} $\partial^{\alpha}f$ of a distribution $f$ is defined by
\[\langle \partial^{\alpha}f, \varphi\rangle = (-1)^{|\alpha|}\langle f, \partial^{\alpha}\varphi\rangle \quad \varphi \in \cS(\G).\]
\end{definition}

Define the \emph{parabolic version} of a Carnot group $\G$ by $\hat\G:=\R\times\G$. This is a Carnot group where the group operation in the first coordinate 
is the usual addition and its homogeneous dimension is $Q+2$. We define dilations on $\hat\G$ by $\hat{\delta}_\lambda(t,x)=(\lambda^2 t,\delta_\lambda(x))$. 

\begin{definition}
The \emph{heat operator} is the operator $\mathcal{H}$ on $\hat{\G}$ defined by $\mathcal{H}:=\partial_t + \Delta_b$.
\end{definition}

The heat operator is:
\begin{itemize}
\item translation invariant i.e. for any $g\in\G$, $\mathcal{H}(u \circ \tau_g)=(\mathcal{H}(u))\circ \tau_g$,
\item homogeneous of degree $2$ i.e. for any $\lambda>0$, $\mathcal{H}(u\circ\hat{\delta}_\lambda)= \lambda^2 \mathcal{H}u$,
\item hypoelliptic i.e. if $u$ is a distribution on $\hat {\G}$ such that $\mathcal{H}u$ is $C^{\infty}$ in some open set $\Omega$, then $u$ must be $C^{\infty}$ on $\Omega$.
\end{itemize}

\begin{definition}
The heat operator $\mathcal{H}$ admits a fundamental solution $h$, usually called the \emph{heat kernel} for $\G$.

Write $h_{t}(x):=h(t,x)$ and define for $f$ locally integrable on $\mathbb{R}^{n}$:
\[H_{t}f(x)=(f\ast h_{t})(x)=\int_{\mathbb{R}^{n}} h(t,y^{-1}x)f(y)\dd y\]
whenever the integral exists. Then $\{H_{t}\}_{t>0}$ is called the \emph{heat semigroup} for $\G$.
\end{definition}

We now recall properties of $h$ and $H_{t}$, see for example \cite{Bon, FS} or \cite[Section IV. 4]{VSCC}

\begin{theorem}\label{heatProp}
The heat kernel $h$ satisfies:
\begin{enumerate}
	\item $h\in C^{\infty}(\hat{\G}\setminus \{(0,0)\})$;
	\item $h(\lambda^2 t, \delta_{\lambda} (x))=\lambda^{-Q} h(t,x)$ for every $x\in \G$ and $t,\lambda>0$;
	\item $h(t,x)=0$ for every $t<0$ and $\int_{\G} h(t,x) \dd x=1$ for every $t>0$;
	\item $h(t,x)=h(t,x^{-1})$ for every $t>0$ and $x\in \G$;
	\item there exists $c\geq 1$ (depending only on $\mathbb{G}$) such that 
	for every $x\in\G$ and $t>0$ 
\begin{equation}\label{heat_estimates}
c^{-1}t^{-Q/2}\exp \Big(-\frac{ |x|^2)}{c^{-1}t}\Big)\leq h(t,x)\leq c t^{-Q/2}\exp \Big(-\frac{ |x|^2}{ct}\Big);
\end{equation}
\item For every nonnegative integer $k$ and $\beta\in \mathbb{N}^n$, there exists $c=c(\beta,k)>0$ such that for every $x\in\mathbb{G}$ and $t>0$
\begin{equation}\label{heat_estimates_der}
\left|\frac{\partial^k}{\partial t^k} D^{\beta} h(t,x)\right|\leq c t^{-\frac{Q+j+2k}{2}}e^{-\frac{|x|^2}{t}}
\end{equation}
\end{enumerate}
Further, for any $f\in L^{1}(\mathbb{R}^{n})$ and $t>0$, we have $H_{t}f \in C^{\infty}(\mathbb{R}^{n})$ and $u(t,x)=H_{t}f(x)$ solves $\mathcal{H}u=0$ in $(0,\infty) \times \mathbb{R}^{n}$. Also $u(t,x)\to f(x)$ strongly in $L^{1}(\mathbb{R}^{n})$ as $t\to 0$.
\end{theorem}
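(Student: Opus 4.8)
The statements in Theorem~\ref{heatProp} are classical, and the plan is to separate the ``soft'' properties (1)--(4) and the final limiting assertions, which follow from general principles, from the ``hard'' Gaussian estimates (5)--(6), which rest on the subelliptic heat kernel machinery of \cite{VSCC} (building on work of Jerison--S\'anchez-Calle and Varopoulos). I would quote (5)--(6) from the literature and sketch the rest.

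For (1): the vector fields $X_1,\dots,X_m$ lie in $V_1$ and, by the stratification hypothesis, Lie-generate all of $\mathfrak g$; hence on $\hat\G=\R\times\G$ the family $\{X_1,\dots,X_m,\partial_t\}$ satisfies H\"ormander's bracket condition, so the heat operator $\mathcal{H}=\partial_t+\Delta_b$, which is built from $\partial_t$ and a sum of squares of the $X_j$ (up to the sign convention on $\Delta_b$), is hypoelliptic. Since $\mathcal{H}h=0$ on $\hat\G\setminus\{(0,0)\}$ in the distributional sense, $h$ is $C^\infty$ there.

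For (2): among distributions supported in $\{t\ge 0\}$ the fundamental solution of $\mathcal{H}$ is unique. The operator $\mathcal{H}$ is left-translation invariant and scales by $\lambda^2$ under $\hat\delta_\lambda$, while $\hat\delta_\lambda$ multiplies Haar measure on $\hat\G$ by $\lambda^{Q+2}$; a direct change of variables in $\langle h, \mathcal{H}^{\!*}\varphi\rangle$ then shows that $(t,x)\mapsto\lambda^{Q}h(\lambda^2 t,\delta_\lambda x)$ is again a fundamental solution supported in forward time, so by uniqueness it equals $h$, which is exactly the claimed homogeneity. Property (4) is proved in the same spirit via symmetry: since $\G$ is unimodular one has $X_j^{*}=-X_j$ on $L^2(\G)$ with respect to Haar measure, hence $\Delta_b$ is symmetric and non-negative and $H_t=e^{-t\Delta_b}$ has a symmetric Schwartz kernel, which written out gives $h(t,y^{-1}x)=h(t,x^{-1}y)$, i.e.\ $h(t,z)=h(t,z^{-1})$. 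For (3), $h(t,\cdot)=0$ for $t<0$ is the forward-support property of the fundamental solution, while $\int_\G h(t,x)\dd x=1$ expresses conservativeness of the heat semigroup on $\G$, valid here because of the polynomial volume growth of $\G$ (the Markovian semigroup generated by $\Delta_b$ annihilates constants and loses no mass).

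The estimates (5)--(6) carry the real content. The plan is: first record the exact on-diagonal scaling $h(t,0)=h(1,0)\,t^{-Q/2}$ coming from (2), with $h(1,0)>0$ from positivity of $h$; combine this with a Davies--Gaffney type integrated off-diagonal bound to obtain the Gaussian upper estimate in \eqref{heat_estimates}; and derive the matching lower bound from a parabolic Harnack inequality for $\mathcal{H}$ (proved by Moser iteration in the subelliptic setting) via a chaining argument along horizontal curves, using the comparability of $|x|$ with the Carnot--Carath\'eodory distance from $0$ to $x$. The derivative bounds \eqref{heat_estimates_der} then follow by applying interior parabolic regularity estimates for $\mathcal{H}$ on Carnot--Carath\'eodory cylinders of unit size to the Gaussian upper bound and rescaling through (2). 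Finally, the assertions that $H_tf\in C^\infty$ for $f\in L^1$, that $\mathcal{H}(H_tf)=0$ for $t>0$, and that $H_tf\to f$ in $L^1$ as $t\to0^+$ follow by differentiating under the integral sign using \eqref{heat_estimates_der}, by hypoellipticity applied to $u(t,x)=H_tf(x)$, and by the approximate-identity property of $h_t=t^{-Q/2}\,h_1\circ\delta_{t^{-1/2}}$ guaranteed by (2)--(3). I expect the main obstacle to be the two-sided bound \eqref{heat_estimates}, and specifically the subelliptic parabolic Harnack inequality underlying the lower bound; this is the step that genuinely requires the heavy machinery of \cite{VSCC}, which is why the theorem is quoted rather than reproved here.
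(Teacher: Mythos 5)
Your proposal is consistent with the paper, which offers no proof of this theorem at all: it is recalled from the literature (the text points to \cite{Bon}, \cite{FS} and \cite[Section IV.4]{VSCC}), exactly as you do for the Gaussian bounds (5)--(6). Your sketches of the soft properties (1)--(4) and of the final approximate-identity assertions are the standard arguments and are correct, including the appropriate caution about the sign convention on $\Delta_b$ and the scaling computation behind the homogeneity in (2).
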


We now recall that the fractional sub-Laplacian and its inverse can be expressed using the heat semigroup $H_t$ as in \cite{Fol1}.

\begin{definition}
We define the \emph{fractional sub-Laplacian} by
\begin{align}\label{defLap}
(-\Delta_{b})^{\alpha}f=\lim_{\varepsilon \to 0}\frac{1}{\Gamma(1-\alpha)}\int_{\varepsilon}^{\infty}t^{-\alpha}(-\Delta_{b})H_{t}f  \dd t
\end{align}
and
$$(-\Delta_{b})^{-\alpha}f=\lim_{\eta\to\infty}\frac{1}{\Gamma(\alpha)}\int_{0}^{\eta}t^{\alpha-1}H_{t}f \dd t,$$
where $0<\alpha<1$ and $f\in L^2(\G)$ is any function for which the relevant limit exists in $L^2$ norm. 
\end{definition}

We recall the following proposition \cite{Fol1,FS}.

\begin{proposition}\label{above}
For $0<\alpha<Q$ the integral
$$R_{\alpha}(x)=\frac{1}{\Gamma(\frac{\alpha}{2})}\int_{0}^{\infty}t^{\frac{\alpha}{2}-1}h(t,x)\dd t, \qquad x\in \G,$$
converges absolutely and has the following properties:
\begin{itemize}
\item $R_{\alpha}$ is a kernel of type $\alpha$, i.e. is $C^{\infty}$ away from $0$ and homogeneous of degree $\alpha - Q$;
\item $R_{\alpha}*R_{\beta}=R_{\alpha+\beta}$ for $\alpha, \beta>0$ and $\alpha+\beta<Q$;
\item $R_{2}$ is the fundamental solution of $-\Delta_{b}$, i.e. $(-\Delta_{b})R_{2}=\delta_{0}$;
\item For $f\in L^{p}(\mathbb{G})$ and $1<p<\infty$, we have $(-\Delta_{b})^{-\alpha}f=f*R_{2\alpha}$.
\end{itemize}
\end{proposition}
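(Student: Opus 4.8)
The plan is to treat the four items separately, in each case reducing to an explicit manipulation of the defining integral together with the heat kernel bounds of Theorem~\ref{heatProp}.

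First I would settle absolute convergence and the ``kernel of type $\alpha$'' claim. Splitting $\int_0^\infty t^{\alpha/2-1}h(t,x)\dd t$ into $\int_0^1$ and $\int_1^\infty$, the Gaussian upper bound in \eqref{heat_estimates} makes the integrand superpolynomially small as $t\to 0^+$ for fixed $x\neq 0$, while for $t\to\infty$ it is $\lesssim t^{\alpha/2-1-Q/2}$, which is integrable exactly because $\alpha<Q$; the same estimates show the integral is locally bounded on $\G\setminus\{0\}$. Homogeneity of degree $\alpha-Q$ follows from the scaling $h(\lambda^2 t,\delta_\lambda x)=\lambda^{-Q}h(t,x)$ and the substitution $t\mapsto\lambda^2 t$ in the integral defining $R_\alpha(\delta_\lambda x)$. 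Smoothness away from the origin I would get by differentiating under the integral sign: for $x$ in a compact subset of $\G\setminus\{0\}$, \eqref{heat_estimates_der} dominates $t^{\alpha/2-1}D^\beta h(t,x)$ by a fixed $L^1(0,\infty)$ function of $t$ for every multi-index $\beta$ (the exponential controls the endpoint $t\to 0^+$, and $\alpha<Q\le Q+|\beta|$ controls $t\to\infty$), so dominated convergence licenses the differentiation and $R_\alpha\in C^\infty(\G\setminus\{0\})$.

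For the semigroup identity $R_\alpha\ast R_\beta=R_{\alpha+\beta}$ I would use the semigroup property of the heat kernel, $h_s\ast h_t=h_{s+t}$, write the convolution as $\frac{1}{\Gamma(\alpha/2)\Gamma(\beta/2)}\iint_{(0,\infty)^2} s^{\alpha/2-1}t^{\beta/2-1}h_{s+t}(x)\dd s\dd t$ (Tonelli applies since $h\ge 0$ by \eqref{heat_estimates}), change variables to $u=s+t$, $v=s$, and recognise the inner integral as a Beta integral producing exactly $\Gamma(\alpha/2)\Gamma(\beta/2)/\Gamma((\alpha+\beta)/2)$; finiteness of the resulting integral is the condition $\alpha+\beta<Q$. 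For the Riesz potential identity I would note $R_{2\alpha}=\frac{1}{\Gamma(\alpha)}\int_0^\infty t^{\alpha-1}h_t\dd t$, so the truncated integrals $\frac{1}{\Gamma(\alpha)}\int_0^\eta t^{\alpha-1}H_t f\dd t$ equal $f\ast K_\eta$ with $K_\eta:=\frac{1}{\Gamma(\alpha)}\int_0^\eta t^{\alpha-1}h_t\dd t\uparrow R_{2\alpha}$; since $2\alpha<Q$ the kernel $R_{2\alpha}$ is of type $2\alpha$, $f\ast R_{2\alpha}$ is defined on $L^p$ by the Hardy--Littlewood--Sobolev inequality, and dominated convergence (by $|f|\ast R_{2\alpha}$) together with uniform $L^q$ bounds identifies the limit of $f\ast K_\eta$ with $f\ast R_{2\alpha}$, which matches the definition of $(-\Delta_b)^{-\alpha}f$.

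The step I expect to be the main obstacle is the fundamental solution claim $(-\Delta_b)R_2=\delta_0$. Here the plan is, for $\varphi\in\cS(\G)$, to write $\langle(-\Delta_b)R_2,\varphi\rangle=\int_0^\infty\!\int_\G h(t,x)(-\Delta_b\varphi)(x)\dd x\dd t$ (Fubini is justified because $R_2$ is homogeneous of degree $2-Q<0$, hence locally integrable with mild decay at infinity), integrate by parts in $x$ using the formal self-adjointness of $\Delta_b$, and then invoke the heat equation $\mathcal{H}h=0$ to trade $-\Delta_b h$ for $\partial_t h$, so that the $t$-integral telescopes. The delicate point is the $t\to 0^+$ boundary term: one must not pass to the pointwise (and spuriously vanishing) limit of $h(t,x)$, but keep the integral against $\varphi$ and use that $\{h_t\}$ is an approximation of the identity --- property (3) of Theorem~\ref{heatProp} together with the Gaussian localisation --- to see that this term equals $\varphi(0)$, while the $t\to\infty$ term vanishes by \eqref{heat_estimates}. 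Keeping the sign conventions among $\mathcal{H}$, $\Delta_b$, and $-\Delta_b$ consistent is part of this step.
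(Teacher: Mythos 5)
Your proposal is essentially correct, but there is nothing in the paper to compare it against: Proposition~\ref{above} is explicitly \emph{recalled} from \cite{Fol1,FS} and the paper supplies no proof of its own. What you have written is, in substance, the classical construction of the Riesz kernels by subordination to the heat semigroup that appears in those references: convergence and homogeneity from the Gaussian bounds \eqref{heat_estimates} and the scaling law $h(\lambda^2 t,\delta_\lambda(x))=\lambda^{-Q}h(t,x)$; smoothness by differentiation under the integral via \eqref{heat_estimates_der}; the composition law from $h_s\ast h_t=h_{s+t}$ together with the Beta integral; the fundamental-solution property by trading $\Delta_b h$ for a $t$-derivative, telescoping, and evaluating the $t\to 0^{+}$ boundary term weakly through the approximate-identity property; and the Riesz-potential identity by monotone convergence of the truncated kernels $K_\eta\uparrow R_{2\alpha}$. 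Three small points deserve to be made explicit. First, the semigroup identity $h_s\ast h_t=h_{s+t}$ is not among the properties listed in Theorem~\ref{heatProp}, so it should be cited or derived (uniqueness for the Cauchy problem, or the spectral calculus, gives it). Second, the last item as stated for all $1<p<\infty$ is too generous: $f\ast R_{2\alpha}$ is controlled by the fractional integration estimate \eqref{stimaR} only for $p$ in the Hardy--Littlewood--Sobolev range, and since \eqref{defLap} defines $(-\Delta_b)^{-\alpha}f$ as an $L^2$-norm limit, your a.e.\ identification needs a one-line reconciliation of the two notions of limit; this looseness is inherited from the statement, not introduced by you. Third, your closing caveat about signs is well taken: the paper's conventions for $\Delta_b$, for $\mathcal{H}=\partial_t+\Delta_b$, and for the spectral resolution of $-\Delta_b$ are not mutually consistent as written, and only after fixing a coherent convention does your telescoping computation return $+\varphi(0)$ rather than $-\varphi(0)$; since you flag this rather than silently assert a sign, the argument stands.
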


From Proposition \ref{above} and Theorem \ref{heatProp} it follows that $R_{\alpha}(x)\approx |x|^{-Q+\alpha}$. Also the function $\rho(x)=(R_{\alpha}(x))^{\frac{1}{\alpha-Q}}$ defines a symmetric homogeneous norm which is smooth away from the origin and induces a quasi-distance equivalent to the left-invariant Carnot-Carath\'{e}odory distance.

In a similar way one can define the function $\tilde{R}_{\alpha}$, introduced in \cite{Fran}, for $\alpha<0$ and $\alpha \not \in \{0,-2,-4,\cdots\}$ by
$$\tilde{R}_{\alpha}(x)=\frac{\frac{\alpha}{2}}{\Gamma(\frac{\alpha}{2})}\int_{0}^{\infty}t^{\frac{\alpha}{2}-1}h(t,x)\dd t.$$
Again, $\tilde{R}_{\alpha}$ is homogeneous of degree $\alpha-Q$ and 
\begin{equation}\label{sx}
\tilde{R}_{\alpha}(x)\approx |x|^{\alpha-Q}.
\end{equation}
Using classical interpolation (or what is it called $\lambda-$kernel estimates in \cite{FS}) one has for $0<\alpha<Q$,
\begin{equation}\label{stimaR}
\|R_{\alpha} u\|_{L^p}\lesssim \|u\|_{L^q}
\end{equation}
for $\frac{1}{p}=\frac{1}{q}-\frac{\alpha}{Q}$ and $1<q<Q$.
Using $\tilde{R}_{\alpha}$ one can define another representation for the fractional sub-Laplacian. The following theorem is from \cite[Theorem 3.11]{Fran}.


\begin{theorem}\label{raplap}
If $u\in \mathcal{S}(\G)$, then for $0<\alpha<1:$
\begin{align*}
(-\Delta_{b})^{\alpha}u(x)&=P.V.\int_{\mathbb{G}}(u(y)-u(x))\tilde{R}_{-2\alpha}(y^{-1}x)\dd y\\
&=\lim_{\varepsilon\to 0^{+}}\int_{\mathbb{G}\setminus B(x,\varepsilon)}(u(y)-u(x))\tilde{R}_{-2\alpha}(y^{-1}x)\dd y.
\end{align*}
\end{theorem}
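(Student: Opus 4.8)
The plan is to pass from the spectral definition \eqref{defLap} of $(-\Delta_b)^\alpha u$ to a pointwise integral formula involving only the heat semigroup, and then to recognise the time-average of $h$ appearing there as $\tilde R_{-2\alpha}$. First I would rewrite \eqref{defLap}: since $u(t,x)=H_tu(x)$ solves $\mathcal Hu=0$ we have $(-\Delta_b)H_tu=-\partial_t H_tu$, so $\frac{1}{\Gamma(1-\alpha)}\int_\varepsilon^\infty t^{-\alpha}(-\Delta_b)H_tu\dd t=-\frac{1}{\Gamma(1-\alpha)}\int_\varepsilon^\infty t^{-\alpha}\partial_t(H_tu)\dd t$; integrating by parts in $t$, letting the upper limit go to infinity (the boundary term there vanishes since $|H_tu(x)|\le\|u\|_{L^\infty}$) and rewriting the boundary term at $t=\varepsilon$ with the help of $\int_\G h(t,y^{-1}x)\dd y=1$ so that the divergent pieces cancel, one obtains in the limit $\varepsilon\to0$ that $(-\Delta_b)^\alpha u(x)=\frac{-\alpha}{\Gamma(1-\alpha)}\int_0^\infty\bigl(H_tu(x)-u(x)\bigr)t^{-1-\alpha}\dd t$. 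This integral is absolutely convergent because $\partial_tH_tu=\Delta_bH_tu$ together with the boundedness of $u$ and $\Delta_bu$ gives $|H_tu(x)-u(x)|\lesssim\min(t,1)$, and for $u\in\mathcal S(\G)$ one checks in the standard way that this pointwise expression coincides with the $L^2$-limit in \eqref{defLap}.

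Next I would symmetrise both sides. Writing $H_tu(x)-u(x)=\int_\G h(t,y^{-1}x)\bigl(u(y)-u(x)\bigr)\dd y$, changing variables $z=y^{-1}x$ (the Haar measure on the nilpotent group $\G$ is bi-invariant and invariant under inversion) and using $h(t,z)=h(t,z^{-1})$, I would record the averaged form $H_tu(x)-u(x)=\tfrac12\int_\G h(t,z)\bigl(u(xz)+u(xz^{-1})-2u(x)\bigr)\dd z$. Performing the same change of variables on the right-hand side of the claimed identity and using $\tilde R_{-2\alpha}(z)=\tilde R_{-2\alpha}(z^{-1})$ (immediate from $h(t,z)=h(t,z^{-1})$), the principal value becomes $\tfrac12\int_\G\bigl(u(xz)+u(xz^{-1})-2u(x)\bigr)\tilde R_{-2\alpha}(z)\dd z$, where the integral is now absolutely convergent and the passage to the limit defining the principal value is justified by dominated convergence: a second-order Taylor expansion of $\eta\mapsto u(x\exp\eta)$ about $\eta=0$ (using exponential coordinates and $\exp(-\eta)=(\exp\eta)^{-1}$) shows that the first-order, odd part cancels, so $|u(xz)+u(xz^{-1})-2u(x)|\lesssim\min(|z|^2,1)$, while $\tilde R_{-2\alpha}(z)\approx|z|^{-Q-2\alpha}$ by \eqref{sx} and $|z|^{2-Q-2\alpha}$ is locally integrable since $\alpha<1$.

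Finally I would combine the two by Fubini's theorem. Inserting the averaged form of $H_tu(x)-u(x)$ into the pointwise formula of the first step, the iterated integral $\int_0^\infty\!\int_\G t^{-1-\alpha}h(t,z)\,|u(xz)+u(xz^{-1})-2u(x)|\dd z\dd t$ is finite: splitting into $|z|\le1$ and $|z|>1$ and using the scaling $h(\lambda^2t,\delta_\lambda z)=\lambda^{-Q}h(t,z)$ together with the Gaussian bounds \eqref{heat_estimates} to get $\int_0^\infty t^{-1-\alpha}h(t,z)\dd t\approx|z|^{-Q-2\alpha}$, together with the bound $\min(|z|^2,1)$ on the second difference, makes both pieces integrable. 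Interchanging the order of integration, the inner integral equals $\frac{-\alpha}{\Gamma(1-\alpha)}\int_0^\infty t^{-1-\alpha}h(t,z)\dd t$, which by the definition of $\tilde R_{-2\alpha}$ and the identity $\Gamma(1-\alpha)=-\alpha\Gamma(-\alpha)$ is exactly $\tilde R_{-2\alpha}(z)$; comparing with the symmetrised form of the right-hand side obtained in the previous step gives the claim.

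The main obstacle is the range $\tfrac12\le\alpha<1$. There neither the principal-value integral over $\G\setminus B(x,\varepsilon)$ nor the iterated heat integral converges absolutely before symmetrisation, since near $x$ one only has $|u(y)-u(x)|\,\tilde R_{-2\alpha}(y^{-1}x)\sim|y^{-1}x|^{1-Q-2\alpha}$, which is not locally integrable for these $\alpha$. The symmetrisation, which rests entirely on the inversion-invariance $h(t,z)=h(t,z^{-1})$ (equivalently $\tilde R_{-2\alpha}(z)=\tilde R_{-2\alpha}(z^{-1})$), turns the first difference into a genuine second difference of size $|y^{-1}x|^{2-Q-2\alpha}$, and this is what legitimises all the interchanges of limits uniformly in $0<\alpha<1$. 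The remaining ingredients — the exact constant, the second-order Taylor estimate in exponential coordinates, and the identification of the pointwise formula with the $L^2$-limit in \eqref{defLap} for Schwartz data — are routine.
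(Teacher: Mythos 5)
The paper itself contains no proof of this statement: Theorem \ref{raplap} is imported verbatim from \cite[Theorem 3.11]{Fran}, so there is no internal argument to compare yours against. Your proposal is essentially the standard proof (and, in outline, the one in the cited source): pass from \eqref{defLap} to the Balakrishnan formula $(-\Delta_{b})^{\alpha}u(x)=-\tfrac{\alpha}{\Gamma(1-\alpha)}\int_{0}^{\infty}t^{-1-\alpha}(H_{t}u(x)-u(x))\,\dd t$ --- which the paper actually records as a separate theorem immediately after this one, so you could simply have invoked it instead of re-deriving it by integration by parts --- then symmetrize using $h(t,z)=h(t,z^{-1})$ together with the inversion- and translation-invariance of Haar measure, and conclude by Fubini. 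The individual steps are sound: the boundary terms in the integration by parts are handled correctly, the second-difference bound $|u(xz)+u(xz^{-1})-2u(x)|\lesssim\min(|z|^{2},1)$ does hold in exponential coordinates (the first-order term is odd in $\eta$ and cancels, and the layer weights make all surviving terms at least quadratic in the homogeneous norm), the set $\{|z|\geq\varepsilon\}$ is inversion-invariant so the truncated integrals symmetrize as claimed, and $\int_{0}^{\infty}t^{-1-\alpha}h(t,z)\,\dd t\approx|z|^{-Q-2\alpha}$ follows from scaling and \eqref{heat_estimates}. Your closing remark correctly identifies the symmetrization as the step that makes both the principal value and the Fubini interchange legitimate for $\tfrac12\leq\alpha<1$.

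One caveat concerns the final constant-matching. With the paper's displayed normalization $\tilde{R}_{\alpha}=\tfrac{\alpha/2}{\Gamma(\alpha/2)}\int_{0}^{\infty}t^{\alpha/2-1}h(t,\cdot)\,\dd t$, the kernel your computation produces, namely $\tfrac{1}{\Gamma(-\alpha)}\int_{0}^{\infty}t^{-1-\alpha}h(t,\cdot)\,\dd t$, equals $-\tfrac{1}{\alpha}\tilde{R}_{-2\alpha}$ rather than $\tilde{R}_{-2\alpha}$, so the identity you assert does not literally follow from $\Gamma(1-\alpha)=-\alpha\Gamma(-\alpha)$. The match is exact for the normalization used in \cite{Fran}, where the prefactor is $\tfrac{\alpha/2}{\Gamma(1+\alpha/2)}=\tfrac{1}{\Gamma(\alpha/2)}$; the extra factor $\alpha/2$ in the paper's numerator appears to be a transcription slip (it is also incompatible with the theorem as stated and with the normalization of $R_{\alpha}$). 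Your proof is correct in substance, but you should either flag this discrepancy or carry the constant explicitly through the last step.
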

Moreover, using Balakrishnan's approach (see \cite{BalT,Bal}) and what proved in \cite[Lemma 8.5]{Gar1} (see also \cite{Fer}) we also have the following formula 
\begin{theorem}
If $u\in \mathcal{S}(\G)$, then for $0<\alpha<1:$
\begin{align*}
(-\Delta_{b})^{\alpha}u(x)&=-\frac{\alpha}{\Gamma(1-\alpha)}\int_0^\infty t^{-\alpha-1}(H_t u(x)-u(x))\, dt\\
&=\frac{1}{\Gamma(-\alpha)}\int_0^\infty t^{-\alpha-1}(H_t u(x)-u(x))\, dt.
\end{align*}
The integral in the right-hand side must be interpreted as a Bochner integral in $L^2(\G)$.
\end{theorem}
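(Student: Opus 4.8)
The plan is to derive the Balakrishnan-type formula from the subordination identity already available for $(-\Delta_b)^\alpha$, passing through the analogous formula with $\tilde R_{-2\alpha}$ or, more directly, working from the spectral/semigroup definition \eqref{defLap}. First I would recall the elementary Gamma-function identity
\[
t^{-\alpha} = \frac{1}{\Gamma(\alpha)}\int_0^\infty s^{\alpha-1} e^{-st}\dd s, \qquad t>0,\ \alpha>0,
\]
together with the companion identity, valid for $0<\alpha<1$,
\[
-\frac{\alpha}{\Gamma(1-\alpha)}\int_0^\infty t^{-\alpha-1}\bigl(e^{-t\lambda}-1\bigr)\dd t = \lambda^{\alpha}, \qquad \lambda\geq 0,
\]
which is checked by integration by parts (the boundary terms vanish since $0<\alpha<1$), reducing it to the previous identity. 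The factor $\frac{1}{\Gamma(-\alpha)}=-\frac{\alpha}{\Gamma(1-\alpha)}$ follows from the functional equation $\Gamma(1-\alpha)=-\alpha\,\Gamma(-\alpha)$, which is why the two stated right-hand sides coincide; I would note this at the outset so only one of the two needs to be proved.

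Next I would justify that for $u\in\mathcal S(\G)$ the operator $(-\Delta_b)^\alpha$ acts by applying $\lambda\mapsto\lambda^\alpha$ through the spectral resolution of the (positive, self-adjoint) sub-Laplacian $-\Delta_b$ on $L^2(\G)$, with $H_t=e^{-t(-\Delta_b)}$ the associated semigroup; this is exactly the content of \cite{Fol1} underlying \eqref{defLap}, and for Schwartz $u$ one has $u$ in the domain of every power of $-\Delta_b$ with rapid decay of the spectral measure, so all integrals below are absolutely convergent Bochner integrals in $L^2(\G)$. Then I would write, using the spectral theorem and Fubini–Tonelli,
\[
-\frac{\alpha}{\Gamma(1-\alpha)}\int_0^\infty t^{-\alpha-1}\bigl(H_tu - u\bigr)\dd t
= -\frac{\alpha}{\Gamma(1-\alpha)}\int_0^\infty t^{-\alpha-1}\Bigl(\int_{[0,\infty)}\bigl(e^{-t\lambda}-1\bigr)\dd E_\lambda u\Bigr)\dd t,
\]
and swap the order of integration, which is legitimate because $|e^{-t\lambda}-1|\lesssim \min(t\lambda,1)$ makes $\int_0^\infty t^{-\alpha-1}|e^{-t\lambda}-1|\dd t$ finite and controlled by $\lambda^\alpha$, while $\int_{[0,\infty)}\lambda^{2\alpha}\dd\|E_\lambda u\|^2<\infty$ for Schwartz $u$. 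Applying the scalar identity inside the spectral integral collapses the $t$-integral to $\lambda^\alpha$, giving $\int_{[0,\infty)}\lambda^\alpha\dd E_\lambda u = (-\Delta_b)^\alpha u$.

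The main obstacle is the Fubini step and the attendant domain/convergence bookkeeping: one must control the $t$-integral uniformly near $t=0$ (where $H_tu-u$ is small, $\|H_tu-u\|_{L^2}\lesssim t\|\Delta_b u\|_{L^2}$, taming the singular weight $t^{-\alpha-1}$ since $\alpha<1$) and near $t=\infty$ (where $\|H_tu-u\|_{L^2}$ stays bounded, or decays, and $t^{-\alpha-1}$ is integrable since $\alpha>0$), and then argue that the resulting $L^2$-valued Bochner integral equals the spectrally defined $(-\Delta_b)^\alpha u$. An alternative, perhaps cleaner, route avoiding explicit spectral calculus is to plug the Gamma identity for $t^{-\alpha-1}$ directly, interchange to recognize $-\int_0^\infty\!\int_0^\infty \cdots = \int_0^\infty s^{\alpha}\bigl(\text{resolvent-type average of }H\bigr)$, and match with \eqref{defLap} via an integration by parts in $t$ turning $\int_0^\infty t^{-\alpha}(-\Delta_b)H_tu\dd t$ into $-\alpha\int_0^\infty t^{-\alpha-1}(H_tu - u)\dd t$ using $\partial_t H_tu = -(-\Delta_b)H_tu = \Delta_b H_t u$ and $H_tu\to u$ as $t\to0$; I would present this integration-by-parts derivation as the primary argument since it only uses facts already quoted (the definition \eqref{defLap}, Theorem \ref{heatProp}, and $\partial_t H_t = \Delta_b H_t$), relegating the spectral viewpoint to a remark.
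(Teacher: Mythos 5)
Your proposal is correct, and it is essentially the argument the paper delegates to its references: the theorem is stated without proof, citing Balakrishnan \cite{BalT,Bal} and \cite[Lemma 8.5]{Gar1}, and the classical Balakrishnan derivation is exactly your combination of the scalar identity $-\frac{\alpha}{\Gamma(1-\alpha)}\int_0^\infty t^{-\alpha-1}(e^{-t\lambda}-1)\,dt=\lambda^\alpha$ with the spectral resolution of $-\Delta_b$, together with the absolute convergence of the Bochner integral coming from $\|H_tu-u\|_{L^2}\lesssim t\|\Delta_b u\|_{L^2}$ near $0$ and boundedness at infinity. Your alternative integration-by-parts route from \eqref{defLap} is also sound and arguably the most self-contained option given what the paper has already established.
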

\subsection{Spectral Analysis in Carnot Groups}
We collect here some well-known results in spectral analysis which will be used later in the paper.

Since $-\Delta_b$ is self-adjoint with domain $\{f\in L^2(\mathbb{G}) : -\Delta_b f\in L^2(\mathbb{G})\}$,
we can consider its spectral resolution $\int_0^{\infty} \lambda dE(\lambda)$. Then \cite[(3.12)]{Fol1},
\[
(-\Delta_{b})^{\alpha}=\int_0^{\infty} \lambda^{\alpha} \dd E(\lambda),
\]
with domain
\[
W^{2\alpha,2}(\mathbb{G})=\left\{u\in L^2(\mathbb{G}): \int_0^{\infty} \lambda^{2\alpha} \dd\left\langle E(\lambda)u,u\right\rangle<\infty\right\}.
\]
Any bounded Borel measurable function $m$ on $[0,\infty)$ defines an operator on $L^2(\mathbb{G})$ by
\[
m(-\Delta_b)=\int_0^{\infty} m(\lambda) \dd E(\lambda).
\]
Let $K_m$ denote the \emph{convolution kernel} of the operator $m(-\Delta_b)$, namely $K_{m}$ is a distribution on $\G$ satisfying
\begin{equation}\label{convkernel}
m(-\Delta_b)u=u*K_m \quad \text{for }u\in \mathcal{S}(\mathbb{G}).
\end{equation}
If $m$ is also compactly supported, then $K_m\in L^2(\mathbb{G})$ and there exists a regular Borel measure $\sigma_m$ on $[0,\infty)$, whose support is the $L^2$ spectrum of $-\Delta_b$, such that \cite[Theorem 3.10]{Martini}:
\[
\int_{\mathbb{G}}|K_m(x)|^2 \dd x=\int_0^{\infty} |m(\lambda)|^2 \dd \sigma_m(\lambda).
\]

\begin{remark}
For any function $f$ on $\mathbb{G}$ and any $\lambda >0$, set $d_{\lambda}f(x)=f(\delta_{\lambda}(x))$. We claim:
\begin{equation}
d_{\lambda}^{-1}(-\Delta_b)^{\alpha} d_{\lambda}= \lambda^{2\alpha}(-\Delta_b)^{\alpha}.
\end{equation}
Indeed, by Theorem \ref{raplap} we get
\begin{align*}
d_{\lambda}^{-1}(-\Delta_b)^{\alpha} d_{\lambda}f(x)&=\lim_{\varepsilon\to 0^{+}}\int_{\mathbb{G}\setminus B(\delta_{\frac{1}{\lambda}}x,\varepsilon)}(d_{\lambda} f(y)-d_{\lambda}f(\delta_{\frac{1}{\lambda}}x))\tilde{R}_{-2\alpha}(y^{-1}\delta_{\frac{1}{\lambda}}x)\dd y\\
&=\lim_{\varepsilon\to 0^{+}}\int_{\mathbb{G}\setminus B(\delta_{\frac{1}{\lambda}}x,\varepsilon)}(f(\delta_{\lambda}y)-f(x))\tilde{R}_{-2\alpha}(\delta_{\frac{1}{\lambda}}((\delta_{\lambda}y)^{-1}x))\dd y\\
&=\lambda^{Q+2\alpha}\lim_{\varepsilon\to 0^{+}}\int_{\mathbb{G}\setminus B(\delta_{\frac{1}{\lambda}}x,\varepsilon)}(f(\delta_{\lambda}y)-f(x))\tilde{R}_{-2\alpha}((\delta_{\lambda}y)^{-1}x)\dd y
\end{align*}
where in the last equality we used $\tilde{R}_{-2\alpha}(\delta_{\frac{1}{\lambda}}((\delta_{\lambda}y)^{-1}x)=\lambda^{Q+2\alpha}\tilde{R}_{-2\alpha}((\delta_{\lambda}y)^{-1}x)$. The conclusion follows by a change of variables. 
\end{remark}
Similar to above, one can check that for any $m\in L^{\infty}((0,\infty))$ we have
\begin{equation}\label{identityC}
d_{\lambda}^{-1}m((-\Delta_b)^{\alpha}) d_{\lambda}= m(\lambda^{2\alpha}(-\Delta_b)^{\alpha}).
\end{equation}


\begin{remark}
Given $m\in L^{\infty}((0,\infty))$ and $t>0$, set
\[ \hat{m}(\lambda)=m(\sqrt{\lambda}) \quad \mbox{and} \quad m^t(\lambda)=m(t\sqrt{\lambda}).\] For any $f\in \mathcal{S}(\mathbb{G})$,
\begin{align*}
m^t((-\Delta_b))f&=m(t(-\Delta_b)^{\frac{1}{2}})f\\
&=d_{t}^{-1}m((-\Delta_b)^{\frac{1}{2}})d_{t} f\\
&=d_{t}^{-1}\hat{m}((-\Delta_b))d_{t} f\\
&=d_{t}^{-1}(d_t f* K_{\hat{m}})\\
&=t^{-Q}(f*d_t^{-1}K_{\hat{m}}).
\end{align*}
Hence
\begin{align}\label{stimafond}
K_{m^t}(x)=t^{-Q} K_{\hat{m}}(\delta_{\frac{1}{t}}(x)).
\end{align}
\end{remark}

Now suppose that $\alpha>Q/2$ and fix $\eta\in C^{\infty}_0(0,\infty)$ not identically zero. If $m$ satisfies
\begin{align}\label{condC}
\sup_{t}\|\eta(\cdot)m(t\cdot)\|_{W^{\alpha,2}(\mathbb{R})}<\infty,
\end{align} 
then by \cite[Lemma 6]{Cr}, $K_{m}^{t}\in L^1(\mathbb{G})$ uniformly in $t\in (0,\infty)$. Here $W^{\alpha,2}(\mathbb{R})$ denotes the standard fractional Sobolev space of order $\alpha$.

\subsection{Semigroups}
We recall a few properties of the semigroups generated by fractional powers of generators of 
strongly continuous semigroups. We refer to \cite[Section 11, Chapter IX]{Yosida} for more information and all the missing proofs.

\subsubsection{General Semigroups}
It is well known that $(\Delta_{b})^{\alpha}$ is the generator of a Markovian semigroup $\{e^{t{A}_\alpha}\}_{t>0}$ which is related to $\{e^{tA}\}_{t>0}$ by the subordination formula 
\begin{align}\label{fracheatsgrp1}
e^{t{A}_\alpha}u &=\int_{0}^{\infty}f_{t,\alpha}(s)e^{sA}u  \dd s \\
\label{fracheatsgrp2}
&=\int_{0}^{\infty}f_{1,\alpha}(\tau)e^{\tau t^{1/\alpha}A}u \dd \tau 
\end{align}
where 
\[
f_{t,\alpha}(\lambda)=\left\{\begin{array}{ll} 
{\displaystyle 
\frac{1}{2\pi i}\int_{\sigma-i\infty}^{\sigma+i\infty}e^{z\lambda-tz^\alpha}\dd z} \quad &\mbox{if} \quad  \lambda\geq 0,
\mbox{ } \\
\\
0 & \mbox{if} \quad  \lambda<0.
\end{array}\right.
\]
for $\sigma>0, t>0$ and $0<\alpha<1$.
Thanks to \cite[Proposition 2 of Section 11, Chapter IX]{Yosida}, $f_{t,\alpha}(\lambda)$ is nonnegative for $\lambda \geq 0$ and for $\lambda > 0$
\begin{align}
\int_{0}^\infty f_{t,\alpha}(s) e^{-s\lambda}\dd s=e^{-t\lambda^{\alpha}}.
\end{align}
Moreover,  
\begin{align}\label{stimaz}
f_{t,\alpha}(s)\leq \mathrm{min}\left\{\frac{1}{t^{1/\alpha}}, \frac{t}{s^{1+\alpha}}\right\}
\end{align}
and for $-\infty<\delta<\alpha$
\begin{align}
\int_0^{\infty} f_{t,\alpha}(s) s^{\delta}\dd s=\frac{\Gamma(1-\delta/\alpha)}{\Gamma(1-\delta)}t^{\delta/\alpha}
\end{align}
and if $\delta\geq \alpha$
\begin{align}
\int_0^{\infty} f_{t,\alpha}(s) s^{\delta}\dd s=+\infty.
\end{align}

\subsubsection{Heat Semigroup}
Let us now come to the heat semigroup. In this case, $(-\Delta_b)^\alpha$ is defined in \eqref{defLap} and its domain is $W^{2\alpha,2}(\mathbb{G})$. We may use \eqref{fracheatsgrp1} and the equation $e^{t(-\Delta_{b})}u=H_{t}u$ to write
\[
e^{t(-\Delta_b)^\alpha}u(x)=\int_{0}^{\infty}f_{t,\alpha}(s)\left(\int_{\mathbb{G}}h(s,y^{-1}x)u(y)\dd y\right)\dd s \qquad \mbox{for } u\in L^2(\mathbb{G}).
\]
Hence, using Theorem \ref{heatProp} and \eqref{fracheatsgrp2}, we have
\begin{align*}
e^{t(-\Delta_b)^\alpha}u(x)&=\int_{0}^{\infty}e^{\tau t^{1/\alpha}{(-\Delta_b)}}u(x)f_{1,\alpha}(\tau)\dd \tau\\
&=\int_{0}^{\infty}\left(\int_{\mathbb{G}}h(\tau t^{1/\alpha},y^{-1}x)u(y)\dd y\right)f_{1,\alpha}(\tau)\dd \tau
\\
&=\int_{\mathbb{G}}\left(\int_{0}^{\infty}h(\tau t^{1/\alpha},y^{-1}x)f_{1,\alpha}(\tau)\dd \tau\right) u(y)\dd y.
\end{align*}
Thus, the function 
\begin{equation}\label{fracheatsgrp3}
h_\alpha(t,y)=\int_{0}^{\infty}h(\tau t^{\frac{1}{\alpha}},y)f_{1,\alpha}(\tau)\dd \tau
\end{equation}
is the integral kernel of the semigroup $e^{t(-\Delta_b)^\alpha}$, i.e., 
\begin{equation}\label{fracheatsgrp4}
e^{t(-\Delta_b)^\alpha}u(x)=\int_{\mathbb{G}}h_\alpha(t,y^{-1}x)u(y) \dd y \qquad \mbox{for }u\in L^2(\mathbb{G}).
\end{equation}

\subsection{Besov Spaces}

We now recall the definition of the Besov space $B_{p,q}^{s}(\G)$ \cite{Saka}.

\begin{definition}
Let $0<s<1$, $1\leq p \leq \infty$ and $1\leq q \leq\infty$. 

The Besov space $B_{p,q}^{s}(\G)$ is defined for $q<\infty$ by
$$B_{p,q}^{s}(\G):=\left\{f\in L^{p}(\G): \int_{\G}\Big(\frac{\|f(xy)-f(x)\|_{L^{p}}}{|y|^{s}}\Big)^{q}\frac{\dd y}{|y|^{Q}}<\infty\right\}.$$

The Besov space $B_{p,\infty}^{s}(\G)$ is defined by
$$B_{p,\infty}^{s}(\G):=\left\{f\in L^{p}(\G): \sup_{y\neq 0}\frac{\|f(xy)-f(x)\|_{L^{p}}}{|y|^{s}}<\infty\right\}.$$

We define the corresponding semi-norms by
$$\|f\|_{\dot{B}^{s}_{p,q}}:=\left\{\begin{array}{ll}
\Big(\int_{\G}\Big(\frac{\|f(xy)-f(x)\|_{L^{p}}}{|y|^{s}}\Big)^{q}\frac{\dd y}{|y|^{Q}}\Big)^{\frac{1}{q}} &\qquad \text{if } q<\infty,\\
\sup_{y\neq 0}\frac{\|f(xy)-f(x)\|_{L^{p}}}{|y|^{s}} &\qquad \text{if } q=\infty.
 \end{array}
\right.
$$
\end{definition}

Note $B_{p,q}^{s}$ can also be defined as the completion of $\mathcal{S}(\mathbb{G})$ with respect to $\|\cdot\|_{L^{p}}+\|\cdot\|_{\dot{B}^{p,q}_{s}}$.

\section{Besov Spaces via Fractional Heat Kernel}\label{fractionalheat}

In this section we will provide a characterization of Besov spaces using the fractional heat kernel. Throughout this section we fix $\alpha \in (0,1)$.  
We first collect some properties of the function $h_{\alpha}$ defined in \eqref{fracheatsgrp3} by
\[h_\alpha(t,y)=\int_{0}^{\infty}h(\tau t^{\frac{1}{\alpha}},y)f_{1,\alpha}(\tau)\dd \tau.\]

\begin{proposition}\label{propalpha}
Given $\alpha \in (0,1)$, the function $h_{\alpha}$ has the following properties:
\begin{enumerate}
	\item $h_{\alpha}\in C^{\infty}(\hat{\G}\setminus \{(0,0)\})$,
	\item $h_{\alpha}(\lambda^{2\alpha} t, \delta_{\lambda} (x))=\lambda^{-Q} h_{\alpha}(t,x)$ for every $x\in \G$ and $t,\lambda>0$,
	\item $h_{\alpha}(t,x)=0$ for every $t<0$ and $\int_{\G} h_{\alpha}(t,x) \dd x=1$ for every $t>0$,
	\item $h_{\alpha}(t,x)=h_{\alpha}(t,x^{-1})$ for every $t>0$ and $x\in \G$.
\end{enumerate}
\end{proposition}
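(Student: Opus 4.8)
The plan is to derive each of the four properties of $h_\alpha$ from the corresponding properties of the ordinary heat kernel $h$ (Theorem~\ref{heatProp}) together with the properties of the subordinator density $f_{1,\alpha}$ recorded in the Semigroups subsection, using the representation formula \eqref{fracheatsgrp3}, namely $h_\alpha(t,y)=\int_0^\infty h(\tau t^{1/\alpha},y)\,f_{1,\alpha}(\tau)\dd\tau$.

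For property (3): positivity and support in $t>0$ follow immediately from $f_{1,\alpha}\geq 0$ and $h(t,\cdot)=0$ for $t\leq 0$; the normalization $\int_\G h_\alpha(t,x)\dd x=1$ follows by Fubini (justified by nonnegativity) from $\int_\G h(\tau t^{1/\alpha},x)\dd x=1$ and $\int_0^\infty f_{1,\alpha}(\tau)\dd\tau=1$ (the latter is the $\delta=0$, $\lambda\to 0$, or the $\delta=0$ case of the moment formula $\int_0^\infty f_{t,\alpha}(s)s^\delta\dd s$ with $t=1$, equivalently the Laplace transform identity at $\lambda=0$). For property (4): symmetry is inherited termwise, since $h(\tau t^{1/\alpha},x)=h(\tau t^{1/\alpha},x^{-1})$ for each $\tau$ by Theorem~\ref{heatProp}(4), and integrating in $\tau$ preserves the identity. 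For property (2), the scaling law, I would substitute $x\mapsto\delta_\lambda x$ and $t\mapsto\lambda^{2\alpha}t$ into \eqref{fracheatsgrp3}: then $\tau(\lambda^{2\alpha}t)^{1/\alpha}=\lambda^2\tau t^{1/\alpha}$, so $h(\tau(\lambda^{2\alpha}t)^{1/\alpha},\delta_\lambda x)=h(\lambda^2(\tau t^{1/\alpha}),\delta_\lambda x)=\lambda^{-Q}h(\tau t^{1/\alpha},x)$ by Theorem~\ref{heatProp}(2), and pulling the constant $\lambda^{-Q}$ out of the $\tau$-integral gives $h_\alpha(\lambda^{2\alpha}t,\delta_\lambda x)=\lambda^{-Q}h_\alpha(t,x)$; no change of variables in $\tau$ is even needed.

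The only property requiring real work is (1), the smoothness of $h_\alpha$ on $\hat\G\setminus\{(0,0)\}$. Here I would differentiate under the integral sign in \eqref{fracheatsgrp3} and justify this with a dominated-convergence argument valid locally uniformly away from the origin. The key inputs are the derivative estimates \eqref{heat_estimates_der} for $h$ — namely $|\partial_t^k D^\beta h(s,x)|\lesssim s^{-(Q+|\beta|+2k)/2}e^{-|x|^2/s}$ — and the two-sided bound \eqref{stimaz}, $f_{1,\alpha}(\tau)\leq\min\{1,\tau^{-1-\alpha}\}$. For fixed $(t_0,x_0)$ with $|x_0|>0$ and $t_0>0$, on a neighborhood where $t\approx t_0$ and $|x|\gtrsim|x_0|$, one substitutes $s=\tau t^{1/\alpha}$ into the derivative estimate and must check integrability in $\tau$ near $\tau=0$ and near $\tau=\infty$: near $\tau=\infty$ the Gaussian factor $e^{-|x_0|^2/(c\tau t_0^{1/\alpha})}\to 1$ but the polynomial $f_{1,\alpha}(\tau)\lesssim\tau^{-1-\alpha}$ decays and the factor $s^{-(Q+|\beta|+2k)/2}=(\tau t^{1/\alpha})^{-(Q+|\beta|+2k)/2}$ also decays, so convergence there is clear; near $\tau=0$ the negative power of $s$ blows up but is killed by $e^{-|x_0|^2/(c\tau t_0^{1/\alpha})}$, which vanishes to infinite order as $\tau\to 0^+$ — this is the crucial point, and it is exactly where the hypothesis $|x_0|\neq 0$ is used. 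Also one needs that chain-rule derivatives of $h(\tau t^{1/\alpha},y)$ in $t$ produce only finitely many terms with extra powers of $\tau$ and $t$, which is harmless by the same reasoning. Combining these, the integrand and all its $(t,x)$-derivatives are dominated by a fixed integrable function of $\tau$ on the neighborhood, so $h_\alpha\in C^\infty$ there; since $(t_0,x_0)\neq(0,0)$ was arbitrary (the case $t_0<0$ being trivial from (3), and the case $t_0=0$, $x_0\neq 0$ handled since then $|x|$ stays bounded below while $t\to 0$ makes the Gaussian even more favorable), this gives (1).

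The main obstacle, then, is purely the estimate in (1): setting up the domination carefully so that differentiation under the integral is legitimate on compact subsets of $\hat\G\setminus\{(0,0)\}$, with the delicate endpoint being $\tau\to 0^+$, resolved by the super-exponential decay of the Gaussian in $h$ once $|x|$ is bounded away from zero. Everything else is a one- or two-line consequence of Theorem~\ref{heatProp} and the $f_{1,\alpha}$ identities, with Fubini (justified by nonnegativity) the only other tool invoked.
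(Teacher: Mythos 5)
Your overall route is the same as the paper's: everything is read off from the subordination formula \eqref{fracheatsgrp3} together with Theorem \ref{heatProp} and the properties of $f_{1,\alpha}$. The paper in fact dismisses (1), (2) and (4) as trivial consequences of the corresponding items of Theorem \ref{heatProp} and only writes out the Fubini computation for the normalization in (3); your treatments of (2), (3) and (4) are correct and match it.

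The one place where your more detailed argument has a genuine gap is in (1), at points $(t_0,x_0)$ with $t_0>0$ and $x_0=0$. These points lie in $\hat\G\setminus\{(0,0)\}$, but your domination covers only neighborhoods on which $|x|$ is bounded below (plus the trivial cases $t_0<0$ and $t_0=0$, $x_0\neq 0$); the case $x_0=0$, $t_0>0$ is never addressed. There the Gaussian factor $e^{-|x|^2/(c\tau t^{1/\alpha})}$ gives no help as $\tau\to 0^+$, the heat-kernel bound only gives $|h(\tau t^{1/\alpha},x)|\lesssim(\tau t^{1/\alpha})^{-Q/2}$, and the bound \eqref{stimaz} that you invoke yields only $f_{1,\alpha}(\tau)\leq 1$ near $\tau=0$, so your proposed dominating function behaves like $\tau^{-Q/2}$ near $0$ and is not integrable (and is worse for derivatives). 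The missing ingredient is the decay of $f_{1,\alpha}$ at $0^+$: either the finiteness of all negative moments, $\int_0^\infty f_{1,\alpha}(s)s^{\delta}\dd s=\Gamma(1-\delta/\alpha)/\Gamma(1-\delta)<\infty$ for every $\delta<\alpha$ (stated in the Semigroups subsection), or the pointwise bound $f_{1,\alpha}(\tau)\lesssim \tau^{-1-\alpha}e^{-\tau^{-\alpha}}$ from \cite{Bog}, which the paper itself uses in the proof of Proposition \ref{pest} precisely to control the regime $|x|^{2\alpha}\leq t$. Inserting either of these restores the domination near $\tau=0$ uniformly for $x$ near $0$ and closes the argument; as written, though, the smoothness claim at $(t_0,0)$ with $t_0>0$ is unproved.
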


\begin{proof}
(1), (2) and (4) are trivial consequences of Theorem \ref{heatProp} (1), (2) and (4) respectively. Property (3) follows from Theorem \ref{heatProp} (3) by observing that for $t>0$ we have
\begin{equation}
\int_{\G} h_{\alpha}(t,x) \dd x=\int_{\G}\int_{0}^{\infty}h(\tau t^{\frac{1}{\alpha}},x)f_{1,\alpha}(\tau)\dd \tau \dd x=\int_{0}^{\infty}f_{1,\alpha}(\tau) \dd \tau=1,
\end{equation}
where the last equality is \cite[Proposition 3, Chapter IX]{Yosida}. 
\end{proof}

In what follows, given $f\in L^p(\G)$ we will use the notation
\begin{align}\label{defU}
u(t,x):=(h_{\alpha}*f)(t,x)&=\int_{\G}h_{\alpha}\left(t,y\right)f\left(y^{-1}x\right) \dd y \nonumber\\
&=\int_{\G}h_{\alpha}\left(t,xy^{-1}\right)f\left(y\right) \dd y.
\end{align}

Recall that $n$ is the topological dimension of $\G$.
\begin{proposition}\label{pest}
Let $k \in \mathbb{N}$ and $\beta\in \mathbb{N}^n$. Then $h_{\alpha}(t,x)$ satisfies for $t>0$ and $x\not=0$:
\begin{equation*}
\Big|\frac{\partial ^{k}}{\partial t^{k}}D^{\beta}h_{\alpha}(t,x)\Big|\lesssim\left\{\begin{array}{ll}
|x|^{-(Q+|\beta|+2\alpha k)} & \quad \text{if } |x|^{2\alpha}\geq t,\\
t^{-\frac{Q+|\beta|+2\alpha k}{2\alpha}} & \quad \text{if } |x|^{2\alpha}\leq t.
\end{array}
\right.
\end{equation*}
Further, for $1\leq p\leq r\leq \infty$ and $\delta=Q(1/p-1/r)$, we have for all $t>0$
\[
\Big\|\frac{\partial ^{k}}{\partial t^k}D^{\beta}u(t,x)\Big\|_{L^{r}}\lesssim t^{-\frac{|\beta|+2\alpha k +\delta}{2\alpha}} \|f\|_{L^{p}}.
\]
\end{proposition}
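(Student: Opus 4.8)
The plan is to obtain the pointwise kernel estimates first, and then deduce the $L^r$ bounds on $u$ by convolution (Young's inequality in the group setting, which holds exactly as in $\R^n$ since $|A|$ is Haar measure).

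For the pointwise estimates, I would start from the subordination representation $h_\alpha(t,x)=\int_0^\infty h(\tau t^{1/\alpha},x)f_{1,\alpha}(\tau)\dd\tau$ and differentiate under the integral sign, which is justified by the rapid decay of $f_{1,\alpha}$ at $0$ (from \eqref{stimaz}, $f_{1,\alpha}(\tau)\le \tau^{-1-\alpha}$) and the Gaussian decay of the heat-kernel derivatives \eqref{heat_estimates_der}. The cleanest route is actually to exploit homogeneity: by Proposition \ref{propalpha}(2), $h_\alpha(t,x)=\lambda^{-Q}h_\alpha(\lambda^{2\alpha}t,\delta_\lambda x)$, and applying $\partial_t^k D^\beta$ together with the fact that $D^\beta$ is built from left-invariant vector fields homogeneous of degree $1$, one gets the scaling identity
\[
\partial_t^k D^\beta h_\alpha(t,x)=\lambda^{Q+|\beta|+2\alpha k}\,(\partial_t^k D^\beta h_\alpha)(\lambda^{2\alpha}t,\delta_\lambda x).
\]
Choosing $\lambda=t^{-1/(2\alpha)}$ in the regime $|x|^{2\alpha}\le t$ reduces everything to a bound on $|(\partial_t^k D^\beta h_\alpha)(1,z)|$ for $|z|\le 1$, which is finite by smoothness (Proposition \ref{propalpha}(1)); choosing $\lambda=1/|x|$ in the regime $|x|^{2\alpha}\ge t$ reduces to bounding $|(\partial_t^k D^\beta h_\alpha)(s,w)|$ for $|w|=1$ and $s=t/|x|^{2\alpha}\le 1$, and for this one needs a uniform bound as $s\to 0^+$ with $w$ bounded away from $0$. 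That last bound is where the subordination integral is genuinely needed: one plugs \eqref{heat_estimates_der} into the differentiated integral, getting $\int_0^\infty (\tau s^{1/\alpha})^{-(Q+|\beta|+2\alpha k)/2}e^{-|w|^2/(\tau s^{1/\alpha})}\,s^{k/\alpha}\,\tau^k\,f_{1,\alpha}(\tau)\dd\tau$ times constants, and then uses $f_{1,\alpha}(\tau)\le \min\{1,\tau^{-1-\alpha}\}$ and the change of variables $r=\tau s^{1/\alpha}$; the Gaussian factor $e^{-|w|^2/r}$ kills the small-$r$ divergence and the $\tau^{-1-\alpha}$ tail (equivalently $r^{-1-\alpha}s^{1+1/\alpha}\cdot s^{-1/\alpha}\cdot\dots$) makes the large-$r$ part converge and stay bounded as $s\to0$. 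I expect this estimate — verifying uniform boundedness of the subordinated integral in the small-$t$, $|x|\sim 1$ regime after differentiation — to be the main technical obstacle; everything else is scaling.

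For the second assertion, write $u(t,x)=(h_\alpha(t,\cdot)*f)(x)$ and hence $\partial_t^k D^\beta u(t,\cdot)=(\partial_t^k D^\beta h_\alpha)(t,\cdot)*f$, moving the left-invariant derivatives onto the kernel. By Young's convolution inequality on $\G$, $\|\partial_t^k D^\beta u(t,\cdot)\|_{L^r}\le \|(\partial_t^k D^\beta h_\alpha)(t,\cdot)\|_{L^\sigma}\|f\|_{L^p}$ with $1+\tfrac1r=\tfrac1\sigma+\tfrac1p$, i.e. $\tfrac1\sigma=1-\tfrac1p+\tfrac1r$ and $\delta=Q/\sigma'=Q(\tfrac1p-\tfrac1r)$ in the conjugate form. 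It then remains to compute $\|(\partial_t^k D^\beta h_\alpha)(t,\cdot)\|_{L^\sigma}$ from the pointwise bound: split $\G=\{|x|^{2\alpha}\le t\}\cup\{|x|^{2\alpha}\ge t\}$, on the first region integrate the constant $t^{-(Q+|\beta|+2\alpha k)/(2\alpha)}$ over a ball of radius $t^{1/(2\alpha)}$ (volume $\approx t^{Q/(2\alpha)}$), on the second integrate $|x|^{-(Q+|\beta|+2\alpha k)\sigma}$, which converges at infinity precisely because $(Q+|\beta|+2\alpha k)\sigma>Q$; both pieces produce $t^{-(|\beta|+2\alpha k+\delta)/(2\alpha)}$ after collecting powers, using $\tfrac{Q}{2\alpha}\big(\tfrac1\sigma-1\big)=-\tfrac{\delta}{2\alpha}$. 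Combining gives the claimed bound. One should note the endpoint cases $p=1$ (so $\sigma=r$, $\delta=Q(1-1/r)$) and $r=\infty$ (so $\sigma=p'$) are handled the same way, and the case $|\beta|=k=0$, $p=r$ recovers $\|u(t,\cdot)\|_{L^p}\le\|f\|_{L^p}$ from Proposition \ref{propalpha}(3).

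A small remark: the statement of Proposition \ref{pest} as displayed has $e^{-|x|^2/t}$ suppressed in the pointwise bound (the paper's estimate \eqref{heat_estimates_der} carries a Gaussian, and one could likewise retain $\exp(-c|x|^{2\alpha/(2\alpha-1)}/t^{1/(2\alpha-1)})$-type decay for $h_\alpha$ via Laplace's method on the subordination integral); but since only the weaker power bounds are asserted, I would not pursue the sharp exponential decay — the power bounds above suffice for all the $L^r$ estimates, and the integrability at infinity in the computation of $\|(\partial_t^k D^\beta h_\alpha)(t,\cdot)\|_{L^\sigma}$ already follows from the homogeneity degree $-(Q+|\beta|+2\alpha k)$ being strictly less than $-Q$.
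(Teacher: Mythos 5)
Your proposal follows essentially the same route as the paper: both reduce the pointwise bounds to a uniform estimate on a reference set via the homogeneity $\partial_t^k D^\beta h_\alpha(\lambda^{2\alpha}t,\delta_\lambda x)=\lambda^{-(Q+|\beta|+2\alpha k)}\partial_t^k D^\beta h_\alpha(t,x)$, control that reference estimate by feeding the Gaussian heat-kernel derivative bounds and the bounds on $f_{1,\alpha}$ into the subordination integral, and then obtain the $L^r$ statement from Young's inequality together with the split of $\|\partial_t^k D^\beta h_\alpha(t,\cdot)\|_{L^\sigma}$ over $\{|x|^{2\alpha}\le t\}$ and $\{|x|^{2\alpha}\ge t\}$. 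Your explicit handling of the endpoint $|\beta|=k=0$, $\sigma=1$ via $\int h_\alpha(t,\cdot)=1$ is a welcome clarification of a point the paper's displayed computation glosses over.
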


\begin{proof}
We start with the pointwise estimates. By Proposition \ref{propalpha}(2) we have:
$$h_{\alpha}(r^{2\alpha}t,\delta_r(x))=r^{-Q}h_{\alpha}(t,x).$$
If $|x|^{2\alpha}\geq t$, we have
\begin{align}
h_{\alpha}(t,x)&=|x|^{-Q}h_{\alpha}(t|x|^{-2\alpha},\delta_{\frac{1}{|x|}}(x))\notag\\
&\leq |x|^{-Q}\sup_{\substack{|y|=1\\0<t_{0}\leq 1}}h_{\alpha}(t_{0},y)\notag.
\end{align}
Hence to conclude the case $|x|^{2\alpha}\geq t$ for the first inequality with $\beta=0$ and $k=0$, it suffices to prove that
\[\sup_{\substack{|y|=1\\0<t_{0}\leq 1}}h_{\alpha}(t_{0},y)<\infty.\]
Indeed, from the expression of $h_{\alpha}$, Theorem \ref{heatProp}, the boundedness of $h(t,y)$ on the set $|y|=1$ and the fact that $f_{1,\alpha}(\tau)$ is continuous and integrable in $\tau$ \cite[Proposition 3, Chapter IX]{Yosida} we have that
\begin{align*}
|h_{\alpha}(t_0,y)|&\leq \int_0^{\infty}|h(\tau t_0^{\frac{1}{\alpha}},y)|f_{1,\alpha}(\tau)\dd \tau\\
&\lesssim \int_0^\infty f_{1,\alpha}(\tau) \dd \tau\\
&<\infty.
\end{align*}
On the other hand, if $|x|^{2\alpha}\leq t$, then we have
\begin{align}
h_{\alpha}(t,x)&=h_{\alpha}(t,\delta_{t^{\frac{1}{2\alpha}}}\delta_{t^{-\frac{1}{2\alpha}}}(x))\notag\\
&\leq t^{-\frac{Q}{2\alpha}}\sup_{0<|y|\leq 1}h_{\alpha}(1,y).\notag
\end{align}
The thesis follows if we prove that $\sup_{0<|y|\leq 1}h_{\alpha}(1,y)<\infty$. Using again the expression of $h_{\alpha}$ and the fact that $h(\tau,y)$ is uniformly bounded if $\tau\geq 1$, we have
\begin{align*}
|h_{\alpha}(1,y)|&\leq \int_0^1 |h(\tau,y)|f_{1,\alpha}(\tau) \dd \tau + \int_1^{\infty} |h(\tau,y)|f_{1,\alpha}(\tau) \dd \tau\\
&\lesssim \int_0^1 \tau^{-\frac{Q}{2}}e^{-\frac{|y|^{2}}{c\tau}}f_{1,\alpha}(\tau) \dd \tau+ 1.
\end{align*}
By \eqref{stimaz}
$$\int_{0}^{\infty}e^{-\lambda a}f_{t,\alpha}(\lambda)d\lambda =e^{-ta^{\alpha}}.$$
Therefore, $f_{t,\alpha}$ is the density of an $\alpha$-stable subordinator. Now from \cite[Eq. 14]{Bog}, we have for $t>0$ and $\lambda>0$,
$$f_{t,\alpha}(\lambda)\lesssim t\lambda^{-1-\alpha}e^{-t\lambda^{-\alpha}}.$$
Hence,

$$\int_0^1 \tau^{-\frac{Q}{2}}e^{-\frac{|y|^{2}}{c\tau}}f_{1,\alpha}(\tau) \dd \tau \lesssim \int_{0}^{1}\tau^{-(\frac{Q}{2}+1+\alpha)}e^{-\frac{1}{\tau^{\alpha}}}\dd \tau<\infty$$
and we conclude as before. This provides us with the following estimate:
$$h_{\alpha}(t,y)\lesssim \max\left(t^{-\frac{Q}{2\alpha}},\frac{1}{|y|^{Q}}\right).$$
The proof of the pointwise estimates of the derivatives follows from the formula
$$\frac{\partial ^{k}}{\partial t^k}D^{\beta}h_{\alpha}(r^{2\alpha}t,\delta_r(x))=r^{-(Q+|\beta|+2k\alpha)}\frac{\partial ^{k}}{\partial t^k}D^{\beta}h_{\alpha}(t,x)$$
and Theorem \ref{heatProp}(6).
Let us move to the $L^{p}$ estimates. Using Young's inequalities, since $u(t,x)=(h_{\alpha}*f)(t,x)$, we have that
$$\Big\|\frac{\partial ^{k}}{\partial t^k}D^{\beta}u(t,x)\Big\|_{L^{p}}\leq \Big\|\frac{\partial ^{k}}{\partial t^k}D^{\beta}h_{\alpha}(t,\cdot)\Big\|_{L^{1}}\|f\|_{L^{p}}.$$
The conclusion follows observing that
\begin{align}
\int_{\G}\Big|\frac{\partial ^{k}}{\partial t^k}D^{\beta}h_{\alpha}(t,x)\Big|\dd x&=\int_{|x|<t^{\frac{1}{2\alpha}}}\Big|\frac{\partial ^{k}}{\partial t^k}D^{\beta}h_{\alpha}(t,x)\Big|\dd x +\int_{|x|\geq t^{\frac{1}{2\alpha}}}\Big|\frac{\partial ^{k}}{\partial t^k}D^{\beta}h_{\alpha}(t,x)\Big|\dd x\notag\\
&\lesssim \int_{|x|<t^{\frac{1}{2\alpha}}} t^{-\frac{Q+|\beta|+2\alpha k}{2\alpha}}\dd x+\int_{|x|\geq t^{\frac{1}{2\alpha}}}|x|^{-(Q+|\beta|+2\alpha k)}\dd x\notag\\
&\lesssim t^{-\frac{|\beta|+2\alpha k}{2\alpha}}+C_{Q}\int_{t^{\frac{1}{2\alpha}}}^{\infty} r^{Q-1} r^{-(Q+|\beta|+2\alpha k)}\, dr\\
&\lesssim t^{-\frac{|\beta|+2\alpha k}{2\alpha}}\notag.
\end{align}

If $q$ is chosen such that $1/r=1/p+1/q-1\geq 0$, then by Young's inequality
\begin{align*}
\Big\|\frac{\partial ^{k}}{\partial t^k}D^{\beta}u(t,x)\Big\|_{L^{r}}&\leq \Big\|\frac{\partial ^{k}}{\partial t^k}D^{\beta}h_{\alpha}(t,\cdot)\Big\|_{L^{q}}\|f\|_{L^{p}}\\
& \lesssim t^{-\frac{|\beta|+2\alpha k+Q(1-1/q)}{2\alpha}}\|f\|_{L^{p}}\\
& \lesssim t^{-\frac{|\beta|+2\alpha k+\delta}{2\alpha}}\|f\|_{L^{p}}.
\end{align*}
\end{proof}

We recall the following useful lemma \cite{mur}:

\begin{lemma}\label{mur}
Let $(S_{1},\mu_{1})$ and $(S_{2},\mu_{2})$ be $\sigma$-finite measure spaces. Fix a $\mu_{1}\times \mu_{2}$-measurable function $K$ for which there exists $C>0$ such that
\begin{enumerate}
\item $|K(x,y)|\leq C$ for $\mu_1\times\mu_2$ a.e. $(x,y)\in S_{1}\times S_{2}$,
\item $\int_{S_{1}}|K(x,y)|d\mu_{1}(x)\leq C$ for $\mu_2$ a.e. $y\in S_{2}$,
\item $\int_{S_{2}}|K(x,y)|d\mu_{2}(y)\leq C$ for $\mu_1$ a.e. $x\in S_{1}$.
\end{enumerate}
Then the integral operator defined by $T(f)=\int_{S_{2}}K(x,y)f(y)\dd \mu_{2}(y)$ is bounded from $L^{p}(S_{2},\mu_{2})$ to $L^{p}(S_{1},\mu_{1})$ for $1\leq p\leq \infty$.
\end{lemma}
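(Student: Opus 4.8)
The plan is to prove this Schur-test type estimate by treating the endpoint exponents $p=1$ and $p=\infty$ directly from hypotheses (2) and (3), and then obtaining the range $1<p<\infty$ by a single application of H\"older's inequality, rather than by interpolation. Throughout, the $\sigma$-finiteness of the two spaces is what justifies the Fubini--Tonelli interchanges, while hypothesis (1) guarantees that $|K(x,y)f(y)|$ is meaningfully integrable; in fact the estimates below show $Tf$ is finite $\mu_1$-a.e. for every $f\in L^p(S_2,\mu_2)$, so $T$ is well defined.

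For $p=1$, Tonelli's theorem and hypothesis (2) give
\[
\|Tf\|_{L^1(S_1,\mu_1)}\le \int_{S_2}|f(y)|\Big(\int_{S_1}|K(x,y)|\dd\mu_1(x)\Big)\dd\mu_2(y)\le C\|f\|_{L^1}.
\]
For $p=\infty$, hypothesis (3) gives the pointwise bound $|Tf(x)|\le \|f\|_{L^\infty}\int_{S_2}|K(x,y)|\dd\mu_2(y)\le C\|f\|_{L^\infty}$ for $\mu_1$-a.e. $x$, hence $\|Tf\|_{L^\infty(S_1,\mu_1)}\le C\|f\|_{L^\infty}$.

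For $1<p<\infty$ with conjugate exponent $p'$, I would factor $|K(x,y)|=|K(x,y)|^{1/p'}|K(x,y)|^{1/p}$ and apply H\"older in the $y$ variable together with (3) to obtain
\[
|Tf(x)|^p\le \Big(\int_{S_2}|K(x,y)|\dd\mu_2(y)\Big)^{p/p'}\int_{S_2}|K(x,y)||f(y)|^p\dd\mu_2(y)\le C^{p/p'}\int_{S_2}|K(x,y)||f(y)|^p\dd\mu_2(y);
\]
integrating in $x$ and using Tonelli together with (2) then yields $\|Tf\|_{L^p(S_1,\mu_1)}^p\le C^{p/p'+1}\|f\|_{L^p}^p=C^p\|f\|_{L^p}^p$, since $p/p'+1=p$.

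There is no substantial obstacle: this is the classical Schur test, and the only care needed is the measure-theoretic bookkeeping (joint measurability of $|K(x,y)f(y)|$ on the product and $\sigma$-finiteness for the applications of Tonelli). One could alternatively deduce the case $1<p<\infty$ by interpolating the $p=1$ and $p=\infty$ bounds via the Riesz--Thorin theorem, but the direct H\"older computation is shorter, self-contained, and keeps the operator norm bound explicitly equal to $C$.
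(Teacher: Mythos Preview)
Your argument is correct; it is the standard Schur test, and all three cases are handled cleanly. Note that the paper does not supply its own proof of this lemma but simply recalls it with a reference to \cite{mur}, so there is no in-paper proof to compare against.
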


Let $0<s<1$, $1\leq p \leq \infty$, $1\leq q<\infty$ and recall the function $u$ defined in \eqref{defU}. We consider the following semi-norm on Besov spaces:
$$\|f\|_{s,p,q}=\left(\int_{0}^{\infty}\left(t^{1-\frac{s}{2}}\left\|\frac{\partial u}{\partial t}(t,\cdot)\right\|_{L^p}\right)^{q}\frac{\dd t}{t}\right)^{\frac{1}{q}}.$$
We can now prove our characterization of Besov spaces using the fractional heat kernel. The following result will be crucial later.

\begin{theorem}\label{pp}
Let $0<s<1$, $0<\alpha<1$, $1\leq p \leq \infty$ and $1\leq q<\infty$. Then for any $f\in L^p(\G)$ we have
$$\|f\|_{s,p,q}\approx \|f\|_{\dot{B}^{\alpha s}_{p,q}}.$$
\end{theorem}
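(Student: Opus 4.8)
The plan is to establish the two-sided estimate $\|f\|_{s,p,q}\approx \|f\|_{\dot B^{\alpha s}_{p,q}}$ by proving each inequality separately, following the scheme of \cite{Saka} but with the fractional heat kernel $h_\alpha$ in place of the classical one; the crucial inputs are Proposition \ref{propalpha} (the scaling $h_\alpha(\lambda^{2\alpha}t,\delta_\lambda x)=\lambda^{-Q}h_\alpha(t,x)$ and the fact $\int_\G h_\alpha(t,\cdot)=1$) and the pointwise/$L^p$ estimates of Proposition \ref{pest}. The effective ``space scale'' attached to time $t$ is $t^{1/(2\alpha)}$, so I expect the correspondence to be between the time-derivative quantity at scale $t$ and the difference quotient at spatial scale $|y|\approx t^{1/(2\alpha)}$, i.e. $|y|^{\alpha s}\leftrightarrow t^{s/2}$; this is exactly why the target smoothness index is $\alpha s$ rather than $s$.

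\textbf{Step 1: $\|f\|_{s,p,q}\lesssim \|f\|_{\dot B^{\alpha s}_{p,q}}$.} Since $\int_\G h_\alpha(t,y)\dd y=1$ and $\int_\G \partial_t h_\alpha(t,y)\dd y=0$ (differentiating the previous identity, justified by Proposition \ref{pest}), I can write
\[
\frac{\partial u}{\partial t}(t,x)=\int_\G \partial_t h_\alpha(t,y)\big(f(y^{-1}x)-f(x)\big)\dd y.
\]
Taking $L^p_x$ norms and using Minkowski's integral inequality gives
\[
\Big\|\frac{\partial u}{\partial t}(t,\cdot)\Big\|_{L^p}\lesssim \int_\G |\partial_t h_\alpha(t,y)|\,\|f(\cdot y)-f(\cdot)\|_{L^p}\dd y.
\]
Now insert the pointwise bound for $|\partial_t h_\alpha(t,y)|$ from Proposition \ref{pest} (with $k=1$, $\beta=0$), splitting the $y$-integral into $|y|^{2\alpha}\le t$ and $|y|^{2\alpha}\ge t$. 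In the inner region use $\|f(\cdot y)-f(\cdot)\|_{L^p}\le |y|^{\alpha s}\|f\|_{\dot B^{\alpha s}_{p,\infty}}$-type control replaced by the honest $\dot B^{\alpha s}_{p,q}$ quantity, and track the power of $t$; one finds $t^{1-s/2}\|\partial_t u(t,\cdot)\|_{L^p}$ is dominated by a convolution (in the multiplicative group $(0,\infty)$ with Haar measure $\dd t/t$) of the function $t\mapsto \|f(\cdot \delta_{t^{1/(2\alpha)}}e)-f\|$-type averages against an integrable kernel, after the change of variables $|y|=t^{1/(2\alpha)}r$. Then $\|f\|_{s,p,q}\lesssim\|f\|_{\dot B^{\alpha s}_{p,q}}$ follows from Young's inequality on $((0,\infty),\dd t/t)$, or equivalently by invoking Lemma \ref{mur} with $S_1=S_2=(0,\infty)$.

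\textbf{Step 2: $\|f\|_{\dot B^{\alpha s}_{p,q}}\lesssim \|f\|_{s,p,q}$ (the reverse, harder direction).} Here one wants to recover the difference quotient from the flow. The standard device is the fundamental-theorem-of-calculus representation
\[
f - u(t,\cdot) = -\int_0^t \frac{\partial u}{\partial \tau}(\tau,\cdot)\dd\tau,
\]
valid because $u(t,\cdot)\to f$ as $t\to 0$ (from Theorem \ref{heatProp} via the subordination formula \eqref{fracheatsgrp3}-\eqref{fracheatsgrp4}, and Proposition \ref{propalpha}(3)). Then for fixed $y$ I write, with $t=|y|^{2\alpha}$,
\[
f(\cdot y)-f(\cdot) = \big(u(t,\cdot y)-u(t,\cdot)\big) + \big(f(\cdot y)-u(t,\cdot y)\big) - \big(f(\cdot)-u(t,\cdot)\big),
\]
estimate the middle and last terms in $L^p$ by $\int_0^t\|\partial_\tau u(\tau,\cdot)\|_{L^p}\dd\tau$, and estimate the first term using the spatial regularity of $h_\alpha$: $\|u(t,\cdot y)-u(t,\cdot)\|_{L^p}\lesssim |y|\,\|\nabla_\G u(t,\cdot)\|_{L^p}\lesssim |y|\,t^{-1/(2\alpha)}\|f\|_{L^p}$ is too crude, so instead one iterates the same trick, or more efficiently uses that $\nabla_\G u(t,\cdot)$ itself can be written via $\partial_t$ of a related flow; the clean route is $\|u(t,\cdot y)-u(t,\cdot)\|_{L^p}\lesssim |y|^{\alpha s}\,(\text{something controlled by }\|f\|_{s,p,q})$ obtained by writing $u(t,\cdot y)-u(t,\cdot)=\int_\G(h_\alpha(t,zy^{-1})-h_\alpha(t,z))f(\cdot z^{-1})\dd z$ — but again we only have $f\in L^p$. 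The genuinely efficient argument, as in \cite{Saka}, combines both: bound $\|f(\cdot y)-f(\cdot)\|_{L^p}\lesssim |y|^{2\alpha}\,t^{-1}\!\!\int_0^{2t}\!\|\partial_\tau u(\tau,\cdot)\|_{L^p}\dd\tau$-type inequality plus the semigroup property $h_\alpha(t+\tau,\cdot)=h_\alpha(t,\cdot)*h_\alpha(\tau,\cdot)$ (from \eqref{fracheatsgrp4}) to relate $\partial_t u(t+\tau,\cdot)$ to $\partial_\tau u$ convolved with $h_\alpha(t,\cdot)$, whose $L^1$ norm is $1$. Plugging $t=|y|^{2\alpha}$, multiplying by $|y|^{-\alpha s}$, taking the $L^q(\dd y/|y|^Q)$ norm and changing variables back to the time variable, the claim again reduces to Young's inequality (Lemma \ref{mur}) on $((0,\infty),\dd t/t)$ with an integrable kernel coming from the decay $|\partial_t h_\alpha(t,x)|\lesssim t^{-1}\min(t^{-Q/(2\alpha)},|x|^{-Q})$.

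\textbf{Main obstacle.} The delicate point is Step 2: producing a difference-quotient bound $\|f(\cdot y)-f(\cdot)\|_{L^p}$ in terms of $\|\partial_\tau u(\tau,\cdot)\|_{L^p}$ with the \emph{correct} power $|y|^{\alpha s}$ and with an $L^q(\dd t/t)$-integrable kernel. The naive FTC estimate loses too much (it only gives the behaviour as $t\to 0$), so one must exploit the semigroup/scaling structure of $h_\alpha$ to trade spatial smoothness for time smoothness — exactly the step where \cite{Saka} uses the heat semigroup property, and which here requires only the analogous property of $h_\alpha$ together with Proposition \ref{pest}. Once that inequality is in hand, both directions are routine applications of the multiplicative Young inequality / Lemma \ref{mur}. (The case $p=\infty$ or $q=\infty$ needs the obvious notational adjustments but no new ideas; since the theorem is stated for $1\le q<\infty$ the supremum case does not arise here.)
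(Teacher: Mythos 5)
Your Step 1 coincides with the paper's argument: the cancellation $\int_{\G}\partial_t h_{\alpha}(t,\cdot)=0$, Minkowski's inequality, the pointwise bounds of Proposition \ref{pest} split over $|y|^{2\alpha}\gtrless t$, and Lemma \ref{mur} on $((0,\infty),\dd t/t)$ and $(\G,\dd y/|y|^{Q})$. The gap is in Step 2, at exactly the point you flag as the main obstacle. Writing $\omega_p(y)=\|f(\cdot y)-f(\cdot)\|_{L^p}$, the concrete inequality you settle on, $\omega_p(y)\lesssim |y|^{2\alpha}t^{-1}\int_0^{2t}\|\partial_\tau u(\tau,\cdot)\|_{L^p}\dd\tau$ with $t=|y|^{2\alpha}$, is false for $\alpha>\tfrac12$: for a fixed smooth bump $f$ one has $\omega_p(y)\approx |y|$ as $y\to 0$, while $\|\partial_\tau u(\tau,\cdot)\|_{L^p}\to\|(-\Delta_b)^{\alpha}f\|_{L^p}$ as $\tau\to 0$, so the right-hand side is $\approx |y|^{2\alpha}$, which is $o(|y|)$ when $\alpha>\tfrac12$. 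A bound using only the small-time integral $\int_0^{2t}$ cannot capture the first-order spatial regularity carried by the term $u(t,\cdot y)-u(t,\cdot)$.

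What the paper actually does there is keep that term as $|y|\,\|\nabla_{\G}u(t,\cdot)\|_{L^p}$ and convert the gradient into a \emph{large-time} tail of the time derivative: by the semigroup factorization $u(t,\cdot)=h_{\alpha}(\tfrac t2,\cdot)*u(\tfrac t2,\cdot)$ and Proposition \ref{pest}, $\|\partial_t X_i u(t,\cdot)\|_{L^p}\lesssim t^{-1/(2\alpha)}\|\partial_t u(\tfrac t2,\cdot)\|_{L^p}$; since $\|X_i u(t,\cdot)\|_{L^{\infty}}\to 0$ as $t\to\infty$ one writes $X_i u(t,x)=-\int_t^{\infty}\partial_r X_i u(r,x)\dd r$ and deduces $\|X_i u(t,\cdot)\|_{L^p}\lesssim \int_{t/2}^{\infty}r^{-1/(2\alpha)}\|\partial_r u(r,\cdot)\|_{L^p}\dd r$. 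This yields $\omega_p(y)\lesssim \int_0^{t}\|\partial_r u\|_{L^p}\dd r+|y|\int_{t/2}^{\infty}r^{-1/(2\alpha)}\|\partial_r u\|_{L^p}\dd r$ with $t=|y|^{2\alpha}$, and each piece is then fed into Lemma \ref{mur} with its own kernel (for the tail, $K_2(t,y)=t^{\frac s2-\frac{1}{2\alpha}}|y|^{1-\alpha s}\chi_{t>|y|^{2\alpha}/2}$, admissible because $\alpha s<1$). Your plan names the semigroup property but never isolates this large-time term, which is precisely where the factor $|y|^{1}$ --- and hence the validity for the whole range $\alpha\in(0,1)$ --- comes from. (Also, the paper proves the equivalence first for $f\in\mathcal{S}(\G)$ and concludes by density, which is needed to justify the limits and the decay at infinity used above.)
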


\begin{proof}
We will show the equivalence of these two semi-norms for $f\in \mathcal{S}(\G)$ and the result will follow then by density. By Proposition \ref{propalpha}(3), $\int_{\G}\frac{\partial h_{\alpha}}{\partial t}\dd x=0$, therefore by \eqref{defU} and Proposition \ref{propalpha}(4)
$$\frac{\partial u}{\partial t}(t,x)=\int_{\G}\frac{\partial h_{\alpha}}{\partial t}(t,y)\Big(f(y^{-1}x)-f(x)\Big)\dd y.$$
Denoting $\omega_{p}(y)=\|f(xy)-f(x)\|_{L^{p}}$ and using Minkowski's integral inequality, we get 
\begin{align*}
\Big\|\frac{\partial u}{\partial t}\Big\|_{L^{p}}&=\left(\int_{\G}\left|\int_{\G} \frac{\partial h_{\alpha}}{\partial t}(t,y) (f(xy)-f(x)) \dd y\right|^p \dd x\right)^{\frac{1}{p}}\\
&\leq \int_{\G}\Big|\frac{\partial h_{\alpha}}{\partial t}(t,y)\Big|\omega_{p}(y)\dd y.
\end{align*}
Now using Proposition \ref{pest}, we have
$$t^{1-\frac{s}{2}}\Big\|\frac{\partial u}{\partial t}\Big\|_{L^{p}} \lesssim \left(t^{1-\frac{s}{2}}\int_{|y|^{2\alpha}\geq t}|y|^{-(Q+2\alpha)}\omega_{p}(y)\dd y+t^{-\frac{Q+s\alpha}{2\alpha}}\int_{|y|^{2\alpha}\leq t}\omega_{p}(y)\dd y\right).$$
Hence,
\begin{align}
\Big(\int_{0}^{\infty}\Big(t^{1-\frac{s}{2}}\Big\|\frac{\partial u}{\partial t}\Big\|_{L^{p}}\Big)^{q}\frac{\dd t}{t}\Big)^{\frac{1}{q}}&\lesssim \Big(\int_{0}^{\infty}\Big(t^{1-\frac{s}{2}}\int_{|y|^{2\alpha}\geq t}|y|^{-(Q+2\alpha)}\omega_{p}(y)\dd y\Big)^{q}\frac{\dd t}{t}\Big)^{\frac{1}{q}}\notag \\
&\quad+\Big(\int_{0}^{\infty}\Big(t^{-\frac{Q+s\alpha}{2\alpha}}\int_{|y|^{2\alpha}\leq t}\omega_{p}(y)\dd y\Big)^{q}\frac{\dd t}{t}\Big)^{\frac{1}{q}}\notag\\
&\lesssim \Big(\int_{0}^{\infty}\Big(\int_{\G}t^{1-\frac{s}{2}}|y|^{-2\alpha+\alpha s}\chi_{|y|^{2\alpha}\geq t}(y)|y|^{-\alpha s}\omega_{p}(y)|y|^{-Q}\dd y\Big)^{q}\frac{\dd t}{t}\Big)^{\frac{1}{q}}\notag \\
&\quad+\Big(\int_{0}^{\infty}\Big(\int_{\G}t^{-\frac{Q+\alpha s}{2\alpha}}|y|^{Q+\alpha s}\chi_{|y|^{2\alpha}\leq t}(y)|y|^{-\alpha s}\omega_{p}(y)|y|^{-Q}\dd y\Big)^{q}\frac{\dd t}{t}\Big)^{\frac{1}{q}}\notag\\
&=I_{1}+I_{2}\label{split}.
\end{align}
For the integral $I_{1}$, we apply Lemma \ref{mur} with
\begin{itemize}
\item $(S_{1},\mu_{1})=((0,\infty),\frac{\dd t}{t})$,
\item $(S_{2},\mu_{2})=(\G,\frac{\dd y}{|y|^{Q}})$,
\item $K=K_{1}(t,y)=t^{1-\frac{s}{2}}|y|^{\alpha s-2\alpha} \chi_{|y|^{2\alpha}\geq t}$, 
\item $p$ replaced by $q$,
\item $f$ replaced by $\tilde{f}(y)=|y|^{-\alpha s}\omega_{p}(y)$.
\end{itemize}
It is not hard to verify that the assumptions of the lemma are satisfied with $C=2$. For instance, to verify Lemma \ref{mur}(2) we need to show $\int_{S_{1}}|K(t,y)|d\mu_{1}(t)\leq C$. To see this we compute as follows
\begin{align*}
\int_{S_{1}}|K(t,y)|d\mu_{1}(t)&=\int_{0}^{\infty}t^{1-\frac{s}{2}}|y|^{\alpha s-2\alpha}\chi_{|y|^{2\alpha}\geq t} \frac{\dd t}{t}\\
&=|y|^{\alpha s-2\alpha}\int_{0}^{|y|^{2\alpha}}t^{-\frac{s}{2}}\dd t\\
&=\frac{1}{1-\frac{s}{2}}|y|^{\alpha s-2\alpha} |y|^{2\alpha (1-\frac{s}{2})}\\
&=\frac{1}{1-\frac{s}{2}}.
\end{align*}
To see $\tilde{f}\in L^{q}(S_{2},\mu_{2})$ we notice that for $|y|>1$, $\omega_{p}(y)\leq 2\|f\|_{L^{p}}$ and hence \[|\tilde{f}(y)|^{q}\lesssim \frac{1}{|y|^{q\alpha s}} \in L^{1}(|y|>1, \mu_{2}).\] Now for $|y|<1$, since $f\in \mathcal{S}(\G)$ we have $\omega_{p}(y)\lesssim |y|\|\nabla_{\G}f\|_{L^{p}}$. Thus 
\[|\tilde{f}(y)|^{q}\lesssim  |y|^{q(1-\alpha  s)} \in L^{1}(|y|<1,\mu_{2}).\]
Applying Lemma \ref{mur} with these parameters leads to the estimate:
\begin{align}\label{stimaI1}I_{1}\lesssim \Big(\int_{\G}\frac{\omega_{p}(y)^{q}}{|y|^{Q+qs\alpha}}\dd y\Big)^{\frac{1}{q}}.\end{align}
Similarly, in order to bound $I_{2}$, we use Lemma \ref{mur} with the same measure spaces, the same function $\tilde{f}$, and $$ K=K_2(t,y)=t^{-\frac{Q+\alpha s}{2s}}|y|^{Q+\alpha s} \chi_{|y|^{2\alpha}\leq t}.$$
Once again this yields
\begin{align}\label{stimaI2}I_{2}\lesssim \Big(\int_{\G}\frac{\omega_{p}(y)^{q}}{|y|^{Q+qs\alpha}}\dd y\Big)^{\frac{1}{q}}.\end{align}
Combining \eqref{stimaI1} and \eqref{stimaI2} we get
\begin{align}
\|f\|_{s,p,q} &= \Big(\int_{0}^{\infty}\Big(t^{1-\frac{s}{2}}\Big\|\frac{\partial u}{\partial t}\Big\|_{L^{p}}\Big)^{q}\frac{\dd t}{t}\Big)^{\frac{1}{q}}\\ &\lesssim \Big(\int_{\G}\frac{\omega_{p}(y)^{q}}{|y|^{Q+qs\alpha}}\dd y\Big)^{\frac{1}{q}}\notag\\
&= \|f\|_{\dot{B}^{\alpha s}_{p,q}}.\notag
\end{align}
On the other hand, we have
$$f(xy)-f(x)=\lim_{\varepsilon\to 0}\int_{\varepsilon}^{t}\left(-\frac{\partial}{\partial r}u(r,xy)+\frac{\partial}{\partial r}u(r,x)\right)\dd r+u(t,xy)-u(t,x).$$
Hence
$$\omega_{p}(y)\leq 2\int_{0}^{t}\Big\|\frac{\partial}{\partial r} u(r,x)\Big\|_{L^{p}}\dd r+\|u(t,xy)-u(t,x)\|_{L^{p}}.$$
But we know that
$$ \|u(t,xy)-u(t,x)\|_{L^{p}}\lesssim |y|\|\nabla_{\G}u(t,x)\|_{L^{p}}.$$
By the semigroup property $u(t,\cdot)=h_{\alpha}(\frac{t}{2},\cdot)*u(\frac{t}{2},\cdot)$ \cite{Yosida} and Proposition \ref{pest}, we get for any $i=1,\ldots, m_1$:
\begin{align}\label{scvs}
\Big\|\frac{\partial }{\partial t}X_{i}u(t,\cdot)\Big\|_{L^{p}}&\lesssim \Big\|X_{i}h_{\alpha}\left(\frac{t}{2},\cdot\right)\Big\|_{L^{1}}\Big\|\frac{\partial}{\partial t}u\left(\frac{t}{2},\cdot\right)\Big\|_{L^{p}}\\
&\lesssim t^{-\frac{1}{2\alpha}}\Big\|\frac{\partial}{\partial t}u\left(\frac{t}{2},\cdot\right)\Big\|_{L^{p}}.\notag
\end{align}
Since $\|X_{i}u(t,\cdot)\|_{L^{\infty}}\to 0$ as $t\to \infty$,
 we obtain
$$X_{i}u(t,x)=-\int_{t}^{\infty}\frac{\partial }{\partial r}X_{i}u(r,x)\dd r.$$
Thus by \eqref{scvs}
\begin{align}
\|X_{i}u(t,\cdot)\|_{L^{p}}&\lesssim \int_{t}^{\infty}r^{-\frac{1}{2\alpha}}\left\|\frac{\partial}{\partial r}u\left(\frac{r}{2},\cdot\right)\right\|_{L^{p}}\dd r\notag\\
&\lesssim \int_{\frac{t}{2}}^{\infty}r^{-\frac{1}{2\alpha}}\left\|\frac{\partial}{\partial r}u(r,\cdot)\right\|_{L^{p}}\dd r.\notag
\end{align}

Therefore,
$$\omega_{p}(y)\lesssim \int_{0}^{t}\left\|\frac{\partial}{\partial r} u(r,x)\right\|_{L^{p}}\dd r+|y|\int_{t/2}^{\infty}r^{-\frac{1}{2\alpha}}\left\|\frac{\partial}{\partial r}u(r,x)\right\|_{L^{p}}\dd r.$$
So, if one takes $t=|y|^{2\alpha}$, we have that
\begin{align}
 \|f\|_{\dot{B}^{\alpha s}_{p,q}} &=\Big(\int_{\G}(|y|^{-\alpha s} \omega_{p}(y))^{q}|y|^{-Q}\dd y\Big)^{\frac{1}{q}}\\
 &\lesssim \Big( \int_{\G}(\int_{0}^{|y|^{2\alpha}}|y|^{-\alpha s}\Big\|\frac{\partial}{\partial t}u(t,x)\Big\|_{L^{p}}\dd t)^{q}|y|^{-Q}\dd y\Big)^{\frac{1}{q}}\notag\\
&\quad+\Big( \int_{\G}(\int_{|y|^{2\alpha}/2}^{\infty}|y|^{1-\alpha s}t^{\frac{-1}{2\alpha}}\Big\|\frac{\partial}{\partial t}u(t,x)\Big\|_{L^{p}}\dd t)^{q}|y|^{-Q}\dd y\Big)^{\frac{1}{q}}\notag\\
&=: I_{1}+I_{2}\notag\\
&\lesssim \Big(\int_{0}^{\infty}(t^{1-\frac{s}{2}}\Big\|\frac{\partial}{\partial t}u(t,x)\Big\|_{L^{p}})^{q}\frac{\dd t}{t}\Big)^{\frac{1}{q}}\\\notag
&=\|f\|_{s,p,q}\notag.
\end{align}
Here we used Lemma \ref{mur} to get the last inequality. Indeed, we take $\tilde{f}(t)=t^{1-\frac{s}{2}}\Big\|\frac{\partial}{\partial t}u(t,x)\Big\|_{L^{p}}$, $(S_{1},\mu_1)=((0,\infty),\frac{\dd t}{t})$ and $(S_{2},\mu_{2})=(\G,\frac{\dd y}{|y|^{Q}})$. For $I_{1}$, we consider the kernel
$$K_{1}(t,y)=|y|^{-\alpha s}t^{\frac{s}{2}}\chi_{t<|y|^{2\alpha}},$$
and for $I_{2}$ we consider the kernel
$$K_{2}(t,y)=t^{\frac{s}{2}-\frac{1}{2\alpha}}|y|^{1-\alpha s}\chi_{t>|y|^{\alpha}/2}.$$
This completes the proof.
\end{proof}

\section{Besov Spaces via Fractional Poisson Kernel}\label{fractionalpoisson}

In this section we characterize Besov spaces $B^s_{p,q}(\mathbb{G})$ using the fractional Poisson kernel. Recall \cite[Eq. 26]{Fran} that for any $0<\alpha<1$ the fractional Poisson kernel can be written as
\[p_{\alpha}(t,x)=C_{\alpha}t^{2\alpha}\int_{0}^{\infty}r^{-(1+\alpha)}e^{-\frac{t^{2}}{4r}}h(r,x) \dd r,\]
where $C_{\alpha}=(4^{\alpha}\Gamma(\alpha))^{-1}$.

\begin{proposition}\label{stimap}
Let $n\in \mathbb{N}$ and $\beta\in \mathbb{N}^n$. Then
$$
\left|D^{\beta}p_{\alpha}(t,x)\right|\lesssim \left\{\begin{array}{ll}
|x|^{-(Q+|\beta|)}\quad & \text{if} \quad |x|\geq t,\\
t^{-(Q+|\beta|)} \quad & \text{if} \quad |x|\leq t
\end{array}
\right.
$$
\end{proposition}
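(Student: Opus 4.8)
The plan is to derive the estimate on $p_\alpha$ and its spatial derivatives directly from the integral representation $p_{\alpha}(t,x)=C_{\alpha}t^{2\alpha}\int_{0}^{\infty}r^{-(1+\alpha)}e^{-t^{2}/(4r)}h(r,x)\dd r$, by moving the derivative $D^\beta$ inside the integral (justified since $h$ and all its $X$-derivatives are smooth and rapidly decaying away from the origin, so differentiation under the integral sign is legitimate for $x\neq 0$) and then applying the heat kernel derivative bound \eqref{heat_estimates_der} from Theorem \ref{heatProp}(6), namely $|D^{\beta}h(r,x)|\lesssim r^{-(Q+|\beta|)/2}e^{-|x|^{2}/r}$. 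This reduces matters to the scalar estimate
\[
\left|D^{\beta}p_{\alpha}(t,x)\right|\lesssim t^{2\alpha}\int_{0}^{\infty}r^{-(1+\alpha)-\frac{Q+|\beta|}{2}}e^{-\frac{t^{2}}{4r}}e^{-\frac{|x|^{2}}{r}}\dd r.
\]
Combining the two exponentials into $e^{-(|x|^2 + t^2/4)/r} \le e^{-c(|x|+t)^2/r}$ for a suitable $c>0$, it suffices to prove that for $a := |x|+t > 0$ and any exponent $\gamma := (1+\alpha) + \frac{Q+|\beta|}{2} > 1$,
\[
t^{2\alpha}\int_{0}^{\infty}r^{-\gamma}e^{-\frac{c a^{2}}{r}}\dd r \lesssim t^{2\alpha} a^{-2(\gamma-1)} = t^{2\alpha}(|x|+t)^{-2\alpha-(Q+|\beta|)},
\]
where the integral is computed by the substitution $r = a^2/u$, yielding a convergent Gamma-type integral $\int_0^\infty u^{\gamma-2}e^{-cu}\dd u$ times $a^{-2(\gamma-1)}$ (convergence at $0$ uses $\gamma > 1$, convergence at $\infty$ is automatic from the exponential).

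The final step is to dispense with the $t^{2\alpha}$ factor and split into the two regimes. Since $(|x|+t)^{-2\alpha} \le t^{-2\alpha}$ always, we immediately get the clean bound
\[
\left|D^{\beta}p_{\alpha}(t,x)\right|\lesssim (|x|+t)^{-(Q+|\beta|)}.
\]
When $|x|\ge t$ one has $|x|+t \approx |x|$, so the right-hand side is $\lesssim |x|^{-(Q+|\beta|)}$; when $|x|\le t$ one has $|x|+t \approx t$, so it is $\lesssim t^{-(Q+|\beta|)}$. This is exactly the claimed dichotomy, so the proof concludes.

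I do not expect any serious obstacle here; the argument is a routine consequence of the subordination-type representation plus the Gaussian heat-kernel derivative bounds. The only points requiring a little care are: (i) justifying differentiation under the integral sign for $x\neq 0$ — this follows because for $x$ in a compact set bounded away from $0$ the integrand and all its $x$-derivatives are dominated by an $r$-integrable function, uniformly, using \eqref{heat_estimates_der} and the $e^{-t^2/(4r)}$ factor to control small $r$; and (ii) making sure the Gamma integral $\int_0^\infty u^{\gamma-2}e^{-cu}\,\dd u$ converges, which holds precisely because $\gamma - 2 = \alpha - 1 + \frac{Q+|\beta|}{2} > -1$ since $\alpha > 0$ and $Q \ge 1$. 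Note that the statement as written uses the letter $n$ for what is really an arbitrary nonnegative integer; since only $\beta \in \mathbb{N}^n$ (with $n$ the topological dimension, already fixed) is actually used, no generality is lost.
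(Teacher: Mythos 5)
Your proof is correct, but it proceeds by a genuinely different route from the paper's. The paper exploits the parabolic homogeneity $p_{\alpha}(\lambda t,\delta_{\lambda}(x))=\lambda^{-Q}p_{\alpha}(t,x)$ to reduce each regime to the finiteness of a supremum of $p_{\alpha}$ over a compact set ($\sup_{0<t_{0}\leq 1,\,|y|=1}p_{\alpha}(t_{0},y)$ when $|x|\geq t$, and $\sup_{|y|\leq 1}p_{\alpha}(1,y)$ when $|x|\leq t$), verifies those suprema by splitting the subordination integral, and then disposes of the derivatives by invoking the homogeneity relation $D^{\beta}p_{\alpha}(rt,\delta_{r}(x))=r^{-(Q+|\beta|)}D^{\beta}p_{\alpha}(t,x)$ together with the same compactness argument. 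You instead differentiate under the integral sign and estimate the subordination integral directly: inserting the Gaussian derivative bound \eqref{heat_estimates_der}, merging the two exponentials via $t^{2}/4+|x|^{2}\geq c(t+|x|)^{2}$, and evaluating the resulting Gamma-type integral gives the single unified bound $|D^{\beta}p_{\alpha}(t,x)|\lesssim t^{2\alpha}(|x|+t)^{-(Q+2\alpha+|\beta|)}$, from which both regimes follow at once. Your computation is sound (the substitution $r=a^{2}/u$ and the convergence condition $\gamma>1$ are correct, and the domination needed for differentiating under the integral is available from \eqref{heat_estimates_der} plus the $e^{-t^{2}/(4r)}$ factor). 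What your approach buys is twofold: it treats all derivatives uniformly in one computation, whereas the paper must separately verify boundedness of each $D^{\beta}p_{\alpha}$ on the relevant compact sets (a step it only sketches by reference to Proposition \ref{pest}); and it delivers, essentially for free, the sharper estimate $t^{2\alpha}(t^{2}+|x|^{2})^{-\frac{Q+2\alpha+|\beta|}{2}}$, which is precisely the content of the paper's subsequent Lemma \ref{lem4.2} for spatial derivatives. What the paper's scaling argument buys is independence from the precise Gaussian form of the heat-kernel bounds: it needs only local boundedness plus homogeneity, which is why the same template transfers verbatim to the fractional heat kernel in Proposition \ref{pest}, where no Gaussian upper bound is available.
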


\begin{proof}
Using Theorem \ref{heatProp}(2) it is easy to see that
\[
p_{\alpha}(\lambda t, \delta_{\lambda}(x))=\lambda^{-Q} p_{\alpha}(t,x)\ \mbox{for every}\ t>0,\ x\in\G\ \mbox{and}\ \lambda>0.
\]  
Therefore, if $t\leq |x|$ we get
$$p_{\alpha}(t,x)=|x|^{-Q}p_{\alpha}\left(\frac{t}{|x|},\delta_{\frac{1}{|x|}}(x)\right)\leq |x|^{-Q}\sup_{\substack{0<t_{0}\leq 1\\|y|=1}}p_{\alpha}(t_{0},y).$$
Hence to estimate $|p_{\alpha}(t,x)|$ for $|x|\geq t$ it is enough to show that \[\sup_{\substack{0< t_{0}\leq 1\\|y|=1}}p_{\alpha}(t_{0},y)<\infty.\] Indeed, from the expression of $p_{\alpha}$ and Theorem \ref{heatProp} (5) we have that
\begin{align}
|p_{\alpha}(t_{0},y)|&\lesssim t_{0}^{2\alpha}\int_{0}^{\infty}r^{-(1+\alpha)}e^{-\frac{t_{0}^{2}}{4r}}h(r,y)\dd r\notag\\
&\lesssim t_{0}^{2\alpha}\Big(\int_{0}^{1}r^{-(1+\alpha)}e^{-\frac{t_{0}^{2}}{4r}}\dd r +\int_{1}^{\infty}r^{-(1+\alpha)}e^{-\frac{t_{0}^{2}}{4r}}r^{-\frac{Q}{2}}\dd r\Big)\notag\\
&\lesssim I+II\notag.
\end{align}
The integral $II$ can be easily bounded, indeed we have
$$II\lesssim \int_{1}^{\infty}r^{-(1+\alpha+\frac{Q}{2})}\dd r<\infty.$$
For the integral $I$ we have, by the change of variable $s=\frac{r}{t_{0}^{2}}$,
$$I\lesssim \int_{0}^{\frac{1}{t_{0}^{2}}}s^{-(1+\alpha)}e^{-\frac{1}{4s}}\dd s \lesssim \int_{0}^{\infty}s^{-(1+\alpha)}e^{-\frac{1}{4s}}\dd s <\infty.$$
It follows that if $t\leq |x|$, then $p_{\alpha}(t,x)\lesssim |x|^{-Q}$.
Similarly, for $|x|\leq t$, we get
$$p_{\alpha}(t,x)=t^{-Q}p_{\alpha}\left(1,\delta_{\frac{1}{t}}(x)\right)\leq t^{-Q}\sup_{|y|\leq 1}p_{\alpha}(1,y).$$
Therefore, the estimate follows, if one shows $\sup_{|y|\leq 1}p_{\alpha}(1,y)<\infty$.  Using \cite[Theorem 4]{Saka} we can estimate
\begin{align*}
	p_{\alpha}(1,y)&\lesssim \int_{0}^{|y|^{2}}r^{-(1+\alpha)}e^{-\frac{1}{4r}}|y|^{-Q}\dd r+\int_{|y|^{2}}^{\infty}r^{-(1+\alpha)}e^{-\frac{1}{4r}}r^{-\frac{Q}{2}}\dd r\\
	&=I+II.
\end{align*}
By the strong decay of the exponential term, we have that
$$II\lesssim \int_{0}^{\infty}r^{-(1+\alpha+\frac{Q}{2})}e^{-\frac{1}{4r}}\dd r<\infty.$$
In order to bound $I$, one can assume that $|y|$ is small or else the bound is obvious. For any $N\in \N$, there exists $C=C(N)>0$ such that for $|y|$ sufficiently small we have
$$e^{-\frac{1}{4r}}\leq C r^{N} \text{ for all } r\in (0, |y|^2).$$
Hence if we pick $N>\frac{1}{2}Q+\alpha$, then
$$I\lesssim |y|^{-Q}\int_{0}^{|y|^2}r^{-(1+\alpha-N)}\dd r\lesssim |y|^{-(Q+2(\alpha-N))}.$$
This gives the desired result, leading to $p_{\alpha}(t,x)\lesssim t^{-Q}$.

Finally, from the homogeneity of $h$ we have
$$D^{\beta}p_{\alpha}(rt,\delta_r(x))=r^{-(Q+|\beta|)}D^{\beta}p_{\alpha}(t,x).$$
The bounds then follow as in Proposition \ref{pest}.
\end{proof}

If one wants to consider also derivatives in $t$, the bound becomes a bit more involved. We adapt the following notation: $f\lesssim_{k,n} g$ if $\partial_{t}^{i}D^{\beta}f\lesssim\partial_{t}^{i}D^{\beta}g$ for all $i\leq k$ and $|\beta|\leq n$.
\begin{lemma}\label{lem4.2}
For all $n,k\in \N$, $t\in (0,\infty)$ and $x\in \mathbb{G}$ we have
$$p_{\alpha}(t,x)\lesssim_{k,n}\frac{t^{2\alpha}}{(t^{2}+|x|^{2})^{\frac{Q+2\alpha}{2}}}.$$
\end{lemma}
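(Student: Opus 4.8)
The plan is to prove the bound
\[
p_{\alpha}(t,x)\lesssim_{k,n}\frac{t^{2\alpha}}{(t^{2}+|x|^{2})^{\frac{Q+2\alpha}{2}}}
\]
by exploiting homogeneity to reduce to a single scale, and then estimating the resulting integral representation directly. Recall $p_{\alpha}(t,x)=C_{\alpha}t^{2\alpha}\int_{0}^{\infty}r^{-(1+\alpha)}e^{-t^{2}/4r}h(r,x)\dd r$. Since $p_{\alpha}(\lambda t,\delta_{\lambda}(x))=\lambda^{-Q}p_{\alpha}(t,x)$ (established in the proof of Proposition \ref{stimap}), and since differentiating in $t$ and in the horizontal directions $D^{\beta}$ scales as $\partial_{t}^{i}D^{\beta}p_{\alpha}(\lambda t,\delta_{\lambda}(x))=\lambda^{-(Q+i+|\beta|)}\partial_{t}^{i}D^{\beta}p_{\alpha}(t,x)$, the quotient of $p_{\alpha}$ by the claimed right-hand side is scale invariant. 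So it suffices to prove the estimate when $t^{2}+|x|^{2}=1$, i.e. for $t\in(0,1]$ and $|x|\le 1$ (together with the complementary normalization $|x|^{2}\le t^{2}$, but these overlap and one range suffices after rescaling). First I would therefore fix $t\in (0,1]$, $|x|\le 1$ and show $|\partial_{t}^{i}D^{\beta}p_{\alpha}(t,x)|\lesssim t^{2\alpha}$ for all $i\le k$, $|\beta|\le n$; the general inequality follows by applying this to $(\delta_{1/\sqrt{t^{2}+|x|^{2}}}$-rescaled variables and undoing the scaling.

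The core estimate at the unit scale is carried out by splitting the $r$-integral. Differentiating under the integral sign, $\partial_{t}^{i}D^{\beta}p_{\alpha}(t,x)$ is a finite sum of terms of the form $C\,t^{2\alpha-a}\int_{0}^{\infty}r^{-(1+\alpha)}\,\partial_{t}^{b}\!\big(e^{-t^{2}/4r}\big)\,D^{\beta}h(r,x)\dd r$ with $a+b\le i$; each $t$-derivative of $e^{-t^{2}/4r}$ produces factors that are polynomials in $t/r$ times $e^{-t^{2}/4r}$, and on $|x|\le 1$, $t\le 1$ these are harmless. Using the heat kernel derivative bound \eqref{heat_estimates_der}, $|D^{\beta}h(r,x)|\lesssim r^{-(Q+|\beta|)/2}e^{-|x|^{2}/r}$, I would split $\int_{0}^{\infty}=\int_{0}^{1}+\int_{1}^{\infty}$. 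On $\int_{1}^{\infty}$ the integrand is bounded by $r^{-(1+\alpha)}r^{-(Q+|\beta|)/2}$ (possibly times a bounded power of $r$ coming from $t$-derivatives, but $\alpha>0$ gives room), which is integrable and contributes $O(t^{2\alpha})$ after restoring the prefactor. On $\int_{0}^{1}$ the crucial point is that $e^{-t^{2}/4r}$ (and its $t$-derivatives, which carry extra negative powers of $r$ absorbed by the Gaussian) decays faster than any power of $r$ as $r\to 0$: for any $N$ there is $C_N$ with $e^{-t^{2}/4r}\le C_{N}(r/t^{2})^{N}$; choosing $N$ large enough — larger than $(Q+|\beta|)/2+\alpha$ plus whatever powers the $t$-differentiation created — makes the integrand integrable near $0$, and pulling out the resulting power of $t$ together with the $t^{2\alpha-a}$ prefactor leaves at least $t^{2\alpha}$ (using $t\le 1$ to discard any leftover positive powers of $t$). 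This mirrors exactly the estimates for $I$ and $II$ already performed in the proof of Proposition \ref{stimap}, now with extra derivatives.

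Finally I would assemble the pieces: at unit scale $|\partial_{t}^{i}D^{\beta}p_{\alpha}(t,x)|\lesssim t^{2\alpha}$, and rescaling by $\lambda=(t^{2}+|x|^{2})^{-1/2}$ converts this into $|\partial_{t}^{i}D^{\beta}p_{\alpha}(t,x)|\lesssim (t^{2}+|x|^{2})^{-(Q+i+|\beta|)/2}\cdot\big(t/\sqrt{t^{2}+|x|^{2}}\big)^{2\alpha}$, which is the claimed bound applied with $i$ extra $t$-derivatives and $|\beta|$ horizontal derivatives (note the right-hand side $t^{2\alpha}(t^{2}+|x|^{2})^{-(Q+2\alpha)/2}$ has precisely this scaling behaviour under $\partial_t$ and $D^\beta$, so the notation $\lesssim_{k,n}$ is consistent). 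The main obstacle I anticipate is bookkeeping: making sure that the $t$-derivatives of $e^{-t^{2}/4r}$ — which introduce terms like $(t/r)^{j}e^{-t^{2}/4r}$ with various $j\le b$ and corresponding negative powers of $r$ — are controlled uniformly, so that the integrable-tail/fast-decay-at-zero dichotomy still works after choosing $N$ sufficiently large depending on $k$ and $n$. Once one verifies that each such term is dominated by $r^{-1-\alpha+(\text{controlled power})}e^{-(t^2+|x|^2)/cr}$ with the Gaussian always winning at $r\to 0^+$, the argument closes exactly as in Proposition \ref{stimap}.
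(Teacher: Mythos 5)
Your overall architecture --- exploit the homogeneity $\partial_t^i D^\beta p_\alpha(\lambda t,\delta_\lambda(x))=\lambda^{-(Q+i+|\beta|)}\partial_t^i D^\beta p_\alpha(t,x)$, reduce to a single scale, and then estimate the subordination integral $t^{2\alpha}\int_0^\infty r^{-(1+\alpha)}e^{-t^2/4r}h(r,x)\,\mathrm{d}r$ directly by splitting in $r$ --- is essentially the paper's strategy (the paper splits into the regimes $t<|x|$ and $t>|x|$ instead of normalizing to $t^2+|x|^2=1$, but this is the same use of homogeneity).

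However, there is a genuine error in your treatment of the $\int_0^1$ piece, and it occurs exactly in the regime that carries the content of the lemma. At unit scale the delicate case is $t\to 0$, $|x|\to 1$ (equivalently $t\ll|x|$ before rescaling): this is where the claimed bound $p_\alpha\lesssim t^{2\alpha}$ improves on the cruder $|x|^{-Q}$ bound of Proposition \ref{stimap}. In that regime your stated mechanism, $e^{-t^2/4r}\le C_N(r/t^2)^N$, produces a factor $t^{-2N}$, so after combining with the prefactor you are left with $t^{2\alpha-a-2N}$, not ``at least $t^{2\alpha}$''; the leftover powers of $t$ are \emph{negative}, and the bound degenerates as $t\to 0$. (The analogous step in Proposition \ref{stimap} works only because there $t=1$ is fixed, so nothing degenerates; you cannot import it verbatim.) The correct source of decay at $r\to 0^+$ in this regime is the Gaussian factor of the heat kernel itself, $h(r,x)\lesssim r^{-Q/2}e^{-|x|^2/cr}$, which at unit scale with $|x|$ bounded below gives $e^{-c'/r}$ uniformly in $t$; this is precisely what the paper does via the substitution $s=r/|x|^2$, reducing everything to the convergent integral $\int_0^\infty s^{-(1+\alpha+Q/2)}e^{-1/cs}\,\mathrm{d}s$. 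Your closing sentence, which invokes the combined factor $e^{-(t^2+|x|^2)/cr}$ (so that whichever of $t$, $|x|$ is bounded below at unit scale supplies the decay), is the right fix --- but it contradicts the explicit computation in your middle paragraph, and the proof as written does not close. Rewrite the small-$r$ estimate so that the decay comes from $e^{-(t^2+|x|^2)/cr}$ with no compensating negative powers of $t$, and the rest of your argument (the $\int_1^\infty$ tail, the bookkeeping of $t$-derivatives of $e^{-t^2/4r}$, and the rescaling) goes through.
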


\begin{proof}
We will write the proof for $\partial_{t}p_{\alpha}$, for higher derivatives the proof follows the same strategy. From the formula defining $p_{\alpha}$ we have that
\begin{align}\partial_{t}p_{\alpha}(t,x)&\lesssim t^{2\alpha-1}\int_{0}^{\infty}r^{-(1+\alpha)}e^{-\frac{t^{2}}{4r}}h(r,x)\dd r+t^{2\alpha+1}\int_{0}^{\infty}r^{-(2+\alpha)}e^{-\frac{t^{2}}{4r}}h(r,x)\dd r\notag\\
&= I+II.
\end{align}

Let us focus on I. Using the estimate $h(r,x)\lesssim r^{-\frac{Q}{2}}e^{-\frac{|x|^{2}}{cr}}$ and a substitution of the form $s=\frac{r}{|x|^{2}}$ we have for $t<|x|$,
$$(t^{2}+|x|^{2})^{\frac{Q+2\alpha}{2}}I\lesssim|x|^{2\alpha+Q}I\lesssim t^{2\alpha-1}\int_{0}^{\infty}s^{-(1+\alpha)}e^{-\frac{t^{2}}{4|x|^{2}s}}s^{-\frac{Q}{2}}e^{-\frac{1}{cs}}ds.$$
This last integral is uniformly bounded by $\int_{0}^{\infty}s^{-(1+\alpha+\frac{Q}{2})}e^{-\frac{1}{cs}}ds$, which is finite since the integrand at infinity behaves like $s^{-(1+\alpha+\frac{Q}{2})}$ and at zero it vanishes at infinite order. Therefore,
$$I\lesssim \frac{t^{2\alpha-1}}{(t^{2}+|x|^{2})^{\frac{Q+2\alpha}{2}}}.$$
Using the same method, we see that the second integral satisfies
$$II\lesssim \frac{t^{2\alpha-1}}{(t^{2}+|x|^{2})^{\frac{Q+2\alpha}{2}}}.$$
Let $t>|x|$. From the homogeneity of $h$ we have
\[
\partial_t^k p_{\alpha}(rt,\delta_{r}x)= r^{-(Q+k)} \partial_t^k p_{\alpha}(t,x).
\] 
Therefore using exactly the same procedure as in the proof of Proposition \ref{stimap} we get
$$\partial_{t}p_{\alpha}(t,x)\lesssim \frac{1}{t^{Q+1}}$$
and this proves our estimate.
\end{proof}

As in the previous section, given $f\in \mathcal{S}(\G)$ we will denote by 
\begin{equation}\label{defPo}
u(t,x)=(p_{\alpha}*f)(t,x)=\int_{\G}p_{\alpha}\left(t,y\right)f\left(y^{-1}x\right)\dd y=\int_{\G}p_{\alpha}\left(t,xy^{-1}\right)f\left(y\right) \dd y
\end{equation}

For the reverse inequality in Theorem \ref{Oct31} we need a non-degeneracy condition, that is a Calderon type formula. In the Euclidean setting one can make use of the Fourier transform, but in a more abstract setting such as ours we need a different tool. We took inspiration from \cite{GY} where a characterization of Besov spaces using the Littlewood-Paley approach is proved. Since we are dealing with continuous versions of the decomposition rather than the classical discrete one, we adopt the following notation. Given $\psi \in L^{1}(\G)$, we denote by $\psi_{t}$ the function
\[\psi_{t}(x)=t^{-Q}\psi(\delta_{\frac{1}{t}}(x)) \qquad \mbox{for all }x\in \G \mbox{ and }t>0.\]

\begin{lemma}\label{lemmaoct22}
There exists $\psi \in L^{1}(\G)$ such that $\int_{\G}\psi=0$ and 
\begin{equation}\label{tesi2}\int_{0}^{\infty}t\psi_{t}*\partial_{t}p_{\alpha}(t)\frac{\dd t}{t}=\delta\quad \mbox{in}\quad \mathcal{S}'(\G).
\end{equation}
\end{lemma}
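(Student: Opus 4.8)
The plan is to construct $\psi$ by prescribing its behavior under convolution with $\partial_t p_\alpha$, using the spectral calculus for $-\Delta_b$ rather than the Fourier transform. The starting point is the observation that the family $\{p_\alpha(t,\cdot)\}_{t>0}$ is the convolution semigroup associated with the multiplier $m(\lambda)=e^{-t\lambda^{\alpha}}$ (equivalently $e^{-t\sqrt{-\Delta_b}}$ when $\alpha=1/2$, and in general a Bochner-subordinated version), so that $t\,\partial_t p_\alpha(t,\cdot)$ is the convolution kernel of the operator $\varphi(t(-\Delta_b)^{\alpha})$ with $\varphi(\lambda)=-t\lambda\,\tfrac{d}{dt}e^{-t\lambda}\big|_{\text{scaled}}$; concretely, writing $G(\lambda)=\lambda e^{-\lambda}$ one has that $t\,\partial_t p_\alpha(t,\cdot)$ is (up to a harmless constant and the substitution $\lambda\mapsto t\lambda^\alpha$) the kernel $K_{G^t}$ in the notation of \eqref{stimafond}, with $G^t(\lambda)=G(t\lambda^\alpha)$. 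We then seek $\psi$ whose convolution kernel realizes a multiplier $\Psi$ with $\int_0^\infty \Psi(t\lambda^\alpha)G(t\lambda^\alpha)\,\tfrac{dt}{t}\equiv 1$ for every $\lambda>0$. After the change of variables $u=t\lambda^\alpha$ this integral becomes $\int_0^\infty \Psi(u)G(u)\,\tfrac{du}{u}$, which is a \emph{constant} independent of $\lambda$; so it suffices to choose $\Psi\in C_0^\infty((0,\infty))$ (hence compactly supported away from $0$ and $\infty$, which forces the mean-zero condition $\int_\G \psi=0$ since $\Psi(0)=0$, and gives $\psi\in L^1$ via the condition \eqref{condC} together with \cite[Lemma 6]{Cr}) with $\int_0^\infty \Psi(u)u\,e^{-u}\,\tfrac{du}{u}=\int_0^\infty \Psi(u)e^{-u}\,du=1$, which is trivially arrangeable since $e^{-u}>0$.

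The key steps, in order: (i) identify $t\,\partial_t p_\alpha(t,\cdot)=c\,K_{G^t}$ with $G(\lambda)=\lambda e^{-\lambda}$ by differentiating the spectral representation $p_\alpha(t,\cdot)*f = e^{-t(-\Delta_b)^\alpha}f$ from \eqref{fracheatsgrp4} and \eqref{fracheatsgrp3}, legitimate for $f\in\mathcal{S}(\G)$; (ii) fix $\Psi\in C_0^\infty((0,\infty))$ normalized so that $\int_0^\infty \Psi(u)e^{-u}\,du = 1/c$, let $\psi = K_\Psi$ be its convolution kernel (a Schwartz function, since $\Psi$ is smooth and compactly supported away from the origin, by the Mikhlin–Hörmander-type results quoted before Section 2.4 and the fact that $\Psi$ vanishing near $0$ gives decay), and record $\int_\G\psi = \Psi(0) = 0$; (iii) compute, for $\varphi\in\mathcal{S}(\G)$,
\[
\Big\langle \int_0^\infty t\,\psi_t * \partial_t p_\alpha(t)\,\tfrac{dt}{t},\ \varphi\Big\rangle
= \int_0^\infty \big\langle (\psi_t * t\partial_t p_\alpha(t))*\varphi,\ \delta\big\rangle\,\tfrac{dt}{t},
\]
use \eqref{stimafond} to rewrite the convolution of the two kernels as the kernel of the composed multiplier $\Psi(t(-\Delta_b)^\alpha)\,G(t(-\Delta_b)^\alpha)$, apply the spectral theorem to evaluate $\big(\Psi G\big)(t(-\Delta_b)^\alpha)\varphi$, and integrate in $t$ using Fubini (justified by the uniform $L^1$ bounds on the kernels from Lemma \ref{lem4.2} and Proposition \ref{stimap}, which make the double integral absolutely convergent against the Schwartz function $\varphi$) to obtain $\int_0^\infty (\Psi G)(t\lambda^\alpha)\,\tfrac{dt}{t} = c\int_0^\infty\Psi(u)e^{-u}\,du = 1$ in the spectral variable, hence the operator equals the identity and the kernel equals $\delta$.

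\textbf{Main obstacle.} The genuine difficulty is the rigorous interchange of the $t$-integral with the spectral integral (equivalently, with testing against $\varphi$): one must show $\int_0^\infty \|t\,\psi_t*\partial_t p_\alpha(t)*\varphi\|\,\tfrac{dt}{t}<\infty$ in a suitable norm, and control both the $t\to 0$ and $t\to\infty$ regimes. For small $t$ the cancellation $\int_\G\psi=0$ is essential: convolving $\varphi$ against $\psi_t$ produces an extra factor $t$ (via the moment bound $\|\psi_t*g - (\int\psi)g\|_{L^1}\lesssim t\|\nabla_\G g\|_{L^1}$-type estimate, here with $\int\psi=0$), which beats the $t^{2\alpha-1}$-type growth of $t\partial_t p_\alpha$ recorded in Lemma \ref{lem4.2}; for large $t$ one uses the smoothing of $e^{-t(-\Delta_b)^\alpha}$ on the Schwartz function $\varphi$ (any number of derivatives fall, each gaining a negative power of $t$) to force decay. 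Making this quantitative — choosing the right norm and assembling the pointwise kernel bounds from Proposition \ref{stimap} and Lemma \ref{lem4.2} with the cancellation — is the technical heart; everything else is the bookkeeping of the spectral-multiplier dictionary already set up in Section 2.4.
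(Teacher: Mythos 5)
Your overall architecture is the same as the paper's: realize $t\partial_t p_\alpha(t,\cdot)$ as a spectral multiplier with a scale-invariant profile, pick a compactly supported profile $G$ normalized so that $\int_0^\infty (\text{profile})\cdot G\,\frac{ds}{s}=1$, take $\psi$ to be the convolution kernel of $G(-\Delta_b)$, get $\psi\in L^1$ from Christ's multiplier lemma \cite{Cr}, and get $\int_\G\psi=0$ from the vanishing of $G$ near the origin. However, there is a genuine error at the very first step: you identify $p_\alpha(t,\cdot)$ with the subordinated semigroup $e^{-t(-\Delta_b)^\alpha}$, i.e.\ with the multiplier $e^{-t\lambda^\alpha}$. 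That is the \emph{fractional heat kernel} $h_\alpha$ of Section 2.4.2, not the fractional Poisson kernel of this section. The kernel $p_\alpha(t,x)=C_\alpha t^{2\alpha}\int_0^\infty r^{-(1+\alpha)}e^{-t^2/4r}h(r,x)\,dr$ is the Caffarelli--Silvestre-type extension kernel; by \cite[Theorem 4.4]{Fran} its multiplier is $\phi(\theta t^{2\alpha}\lambda^\alpha)=H_\alpha(t\sqrt\lambda)$, a Bessel-type profile in the variable $t\sqrt\lambda$, consistent with the scaling $p_\alpha(\lambda t,\delta_\lambda x)=\lambda^{-Q}p_\alpha(t,x)$ used throughout Section 4. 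Your claimed multiplier $G(t\lambda^\alpha)$ has the parabolic scaling $t\sim\lambda^{-\alpha}$ of $h_\alpha$ (Proposition \ref{propalpha}(2)), which is incompatible with Proposition \ref{stimap} and Lemma \ref{lem4.2}. Consequently your explicit normalization ``$\int_0^\infty\Psi(u)e^{-u}\,du=1$ is trivially arrangeable since $e^{-u}>0$'' rests on the wrong weight: with the true profile $\tilde H_\alpha(s)=sH_\alpha'(s)$ one must first argue that $\tilde H_\alpha$ is continuous and not identically zero, then support $\Psi$ in an interval where it does not vanish (this is exactly what the paper does with the cutoff $\eta$ and $G_1(s)=\eta(s)s/\tilde H_\alpha(s)$). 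The repair is routine but the claim as written is false.

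Two smaller points. First, the identity $\int_\G\psi=\Psi(0)=0$ is only heuristic in this non-Fourier setting; the paper makes it rigorous by factoring $G(s)=s^2G_2(s)$, so that $K_{\hat G}=-\Delta_b K_{\hat G_2}$ and its integral vanishes. Your $\Psi$ vanishes near $0$, so the same factorization applies, but you should say so. Second, your ``main obstacle'' paragraph proposes to justify the interchange of the $t$-integral with the pairing via pointwise kernel bounds and the cancellation of $\psi$; the paper instead stays entirely in the spectral picture, using Fubini for the scalar measures $d\mu_{f,g}(\lambda)\,dt$ and the weak-$*$ convergence $\int_\varepsilon^A\tilde H_\alpha(t\sqrt\lambda)G(t\sqrt\lambda)\frac{dt}{t}\to 1$ pointwise in $\lambda$. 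Your route could in principle work, but it is not carried out, whereas the spectral route is essentially immediate once the correct multiplier is in hand.
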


\begin{remark}
Equation \eqref{tesi2} is always to be interpreted to mean
\begin{align}\label{tesi}
\lim_{\varepsilon\to 0,A\to\infty}\int_{\varepsilon}^A t\psi_{t}*\partial_{t}p_{\alpha}(t)\frac{\dd t}{t}=\delta \quad \mbox{in} \quad \mathcal{S}'(\G).
\end{align}
More precisely, let $K_{A,\varepsilon}=\int_{\varepsilon}^A t\psi_{t}*\partial_{t}p_{\alpha}(t) \frac{\dd t}{t} \in \mathcal{S}'(\G)$, in the sense that $\langle K_{A,\varepsilon},f\rangle_{\mathcal{S}'}=\int_{\G}K_{A,\varepsilon}(y)f(y)\dd y$. The convergence of $K_{A,\varepsilon}\to \delta$ in $\mathcal{S}'(\G)$ means that $\langle K_{A,\varepsilon},f\rangle_{\mathcal{S}'}\to \langle \delta,f\rangle_{\mathcal{S}'}=f(0)$ for all $f\in \mathcal{S}(\G)$. Now, if we set $T_{A,\varepsilon}=K_{A,\varepsilon}*f$, then by definition of convolution of tempered distributions, we have for $\tilde{f}_{x}(y)=f(xy^{-1})$, that
$$T_{A,\varepsilon}(x)=\langle K_{A,\varepsilon},\tilde{f}_{x}\rangle_{\mathcal{S}'},$$
for all $x\in \G$. Thus, by the convergence of $K_{A,\varepsilon}\to \delta$ we have for all $f\in \mathcal{S}(\G)$ and all $x\in \G$, $$T_{A,\varepsilon}(x)\to f(x).$$
\end{remark}

\begin{proof}
Let us denote by $\{E_{\lambda}\}$ the spectral resolution of $-\Delta_{b}$ in $L^2(\G)$. 
Let $\phi\in C^{2-\alpha}([0,\infty))$ be as in \cite[Proposition 4.1]{Fran}. By \cite[Theorem 4.4]{Fran}, for any $u\in L^2(\G)$ and $t>0$ we have
\[
v(\cdot,t)=\int_0^{\infty}\phi(\theta t^{2\alpha}\lambda^{\alpha})\dd E(\lambda)u=u*p_{\alpha}(\cdot,t)
\]
where $\theta=(2\alpha)^{-2\alpha}$.
Moreover, by \cite[Proof of Theorem 4.4]{Fran}, the following formula holds:
\[
\phi(\theta t^{2\alpha}\lambda^{\alpha})=2^{-(\alpha+1)}c_{\alpha}\lambda^{\alpha/2}t^{\alpha}\theta^{1/2}\int_0^{\infty} \tau^{-(\alpha+1)}e^{-\tau\sqrt{\lambda}t}e^{-\frac{\sqrt{\lambda}t}{4\tau}} \dd \tau=H_{\alpha}(\sqrt{\lambda}t).
\]

%
Here $c_{\alpha}>0$ (for the precise expression see \cite[Proposition 4.1]{Fran}) and $H_{\alpha}:[0,\infty)\to \R$ denotes the continuous function defined as


$$H_{\alpha}(s)=2^{-(\alpha+1)}c_{\alpha}\theta^{1/2}s^{\alpha}\int_0^{\infty} \tau^{-(\alpha+1)}e^{-\tau s}e^{-\frac{s}{4\tau}} \dd \tau.$$

Therefore,
\begin{equation}\label{convs}
u*t\partial_t p_{\alpha}(\cdot,t)=\int_0^{\infty} \tilde{H}_{\alpha}(t\sqrt{\lambda})\dd E(\lambda)u
\end{equation}
where $\tilde{H}_{\alpha}(s)=sH_{\alpha}'(s)$. 

Now we want to find a continuous function $G:[0,\infty)\to \R$ such that
$$\int_{0}^{\infty}\tilde{H}_{\alpha}(t\sqrt{\lambda})G(t\sqrt{\lambda})\frac{\dd t}{t}=1,$$
which is equivalent to
$$\int_{0}^{\infty}\tilde{H}_{\alpha}(s)G(s)\frac{\dd s}{s}=1.$$
Since $\tilde{H}_{\alpha}(s)$ is continuous and not equal to the zero function, there exist an interval $I=[\frac{a}{2},2b]$ with $a>0$, where $|\tilde{H}_{\alpha}(s)|>0$ for all $s\in I$. Let $\eta$ be a smooth function supported in $[\frac{a}{2},2b]$, equals $1$ on $[a,b]$ and such that $0\leq \eta(s)\leq 1$ in $I$. For all $s\in I$ we define \[G_{1}(s)=\eta(s)\frac{s}{\tilde{H}_{\alpha}(s)}.\]
Therefore,
\[
\int_{0}^{\infty}\tilde{H}_{\alpha}(s)G_{1}(s)\frac{\dd s}{s}=\int_{a/2}^{2b} \eta(s)\dd s.
\]
Hence it suffices to take 
\[
G(s)=\frac{G_{1}(s)}{\int_{a/2}^{2b} \eta(t)\dd t}.
\]
Define $\hat{G}:[0,\infty)\to \R$ by $\hat{G}(\lambda)=G(\sqrt{\lambda})$. Since $G$ has compact support then using the results of Section 2.3, there exists $K_{\hat{G}}\in L^2(\mathbb{G})$ such that for all $u\in \mathcal{S}(\mathbb{G})$
\[
\hat{G}(-\Delta_b)u=u*K_{\hat{G}}.
\]
For every $t>0$ we define $\hat{G}^t(\lambda)=G(t\sqrt{\lambda})$, from (\ref{stimafond}) we get $K_{\hat{G}^t}(x)=t^{-Q}K_{\hat{G}}(\delta_{\frac{1}{t}}(x))$. To conclude the proof it suffices to take 
$\psi(x)=K_{\hat{G}}(x)$ which gives $\psi_t(x)=K_{\hat{G}^t}(x)$. Indeed, by \eqref{convs}, for all $u\in L^2(\mathbb{G})$
\begin{align}\label{scv12}
\int_{\varepsilon}^A u*t\partial_{t}p_{\alpha}(\cdot,t) \frac{\dd t}{t}=\int_{\varepsilon}^A\int_0^{\infty} \tilde{H}_{\alpha}(t\sqrt{\lambda})\dd E(\lambda)u\frac{\dd t}{t}.
\end{align}
Taking $u=f*\psi_t$ we get $u=\hat{G}^t((-\Delta_b))f=\int_0^{\infty} G(t\sqrt{\lambda}) \dd E(\lambda)f$. Therefore, \eqref{scv12} implies
\begin{align*}
\int_{\varepsilon}^A t\psi_t*\partial_{t}p_{\alpha}(t)*f \frac{\dd t}{t}&=\int_{\varepsilon}^{A}\int_0^{\infty} \tilde{H}_{\alpha}(t\sqrt{\lambda})G(t\sqrt{\lambda}) \dd E(\lambda)f\frac{\dd t}{t}\\
&=\int_{0}^\infty\int_{\varepsilon}^{A} \tilde{H}_{\alpha}(t\sqrt{\lambda})G(t\sqrt{\lambda})\frac{\dd t}{t}\dd E(\lambda)f
\end{align*}

Since, $\int_{\varepsilon}^{A} \tilde{H}_{\alpha}(t\sqrt{\lambda})G(t\sqrt{\lambda})\frac{\dd t}{t}\to 1$  as $A\to \infty$ and $\varepsilon\to 0$, for all $\lambda>0$, we have that
$$\lim_{A\to \infty,\varepsilon \to 0} \int_{\varepsilon}^A t\psi_t*\partial_{t}p_{\alpha}(t)*f \frac{\dd t}{t}=\int_{0}^{\infty}\dd E(\lambda)f=f.$$

Here the first equality follows from the fact that for the spectral measure $\dd E$, one has $\int f(\lambda)g(\lambda)\dd E(\lambda)=\int f(\lambda)\dd E(\lambda)\int g(\lambda)\dd E(\lambda)$ for all bounded Borel functions $f$ and $g$.
The second equality follows from Fubini's theorem. One way to see this is to localize at given functions $f$ and $g$, that is, in the form $$\langle\int_{a}^{b}\int_{0}^{\infty}G(t,\lambda)\dd E(\lambda)f,g\rangle_{L^{2}}\dd t=\int_{a}^{b}\int_{0}^{\infty}G(t,\lambda)\dd\mu_{f,g}(\lambda)\dd t$$
and then use the classical Fubini theorem for the measures $\dd \mu_{f,g}$ and $\dd t$ since now the measures are real valued, not operator valued.
The last convergence statement follows again from the fact that if $f_{n}$ converges pointwise to $f$, then the corresponding operators converge in the weak $*$-topology, that is $$\langle \int f_{n}(\lambda)\dd E(\lambda)u,v\rangle_{L^{2}}\to \langle \int f(\lambda)\dd E(\lambda)u,v\rangle_{L^{2}}$$ 
for all $u,v \in L^{2}$.

Finally, by \cite[Theorem 1, Lemma 6]{Cr}, we have that $\|\psi\|_{L^{1}}$ is finite. Moreover, notice that $G(s)=s^{2}G_{2}(s)$, where
$$G_{2}(s)=\frac{\eta(s)}{s\tilde{H}_{\alpha}(s)\int_{a/2}^{2b}\eta(t)\dd t}.$$
Notice that $G_{2}$ is well defined and smooth since it is supported away from $0$. Setting $\hat{G}_{2}(\lambda)=G_{2}(\sqrt{\lambda})$, we have that 
$$\hat{G}(-\Delta_{b})=-\Delta_{b}\hat{G}_{2}(-\Delta_{b}).$$
In particular, we have that $$K_{\hat{G}}=-\Delta_{b}K_{\hat{G}_{2}}.$$ 
Therefore, 
$$\int_{\G}\psi(x)\dd x=\int_{\G}-\Delta_{b}K_{\hat{G}_{2}}(x)\dd x=0.$$
\end{proof}

\begin{remark}
Notice that a similar construction can be also done for $t^{r}(-\Delta_{b})^{\frac{r}{2}}p_{\alpha}$ for $r\in [0,2]$. Namely, there for $r\in [0,1]$, there exists $\psi\in L^{1}(\G)$, with $\int_{\G}\psi=0$ and $\int_{0}^{\infty}t\psi_{t}*t^{r}(-\Delta_{b})^{r}p_{\alpha}(t)\frac{dt}{t}=\delta$. Indeed the spectral multiplier corresponding to $t^{r}(-\Delta_{b})^{\frac{r}{2}}p_{\alpha}$ can be written as $(\sqrt{\lambda}t)^{r}H(\sqrt{\lambda}t)$. Hence one needs to find $G$ such that 
$$\int_{0}^{\infty}s^{r}H(s)G(s)\frac{\dd s}{s}=1.$$
But then, using the same idea as before, we can pick $$G_{1}(s)=\eta(s)\frac{s^{1-r}}{H(s)}.$$ Another important remark is that if $\psi$ is chosen as in the proof of Lemma 4.3, then $-\Delta_{b}(\psi)\in L^{1}$. In particular, by real interpolation we have,
\[\|\nabla_{\G}\psi\|_{L^{1}}\lesssim \|\psi\|_{L^{1}}^{\frac{1}{2}}\|\psi\|_{S^{1}_{2}}^{\frac{1}{2}}\lesssim \|-\Delta_{b}\psi\|_{L^{1}}+\|\psi\|_{L^{1}}.\]
Hence $\nabla_{\G}\psi \in L^{1}(\G)$.
\end{remark}

\begin{theorem}\label{Oct31}
Let $f\in \mathcal{S}(\G)$ and $u$ be as in \eqref{defPo}. Then for $s\in (0,1)$ we have
$$\Big(\int_{0}^{\infty}(t^{1-s}\|\nabla_{\G}u(t,x)\|_{L^{p}})^{q}\frac{\dd t}{t}\Big)^{\frac{1}{q}}\approx \|f\|_{\dot{B}^{s}_{p,q}}.$$
Also, for $s<2\alpha<2$,
$$\Big(\int_{0}^{\infty}(t^{1-s}\|\frac{\partial}{\partial t}u(t,x)\|_{L^{p}})^{q}\frac{\dd t}{t}\Big)^{\frac{1}{q}}\approx \|f\|_{\dot{B}^{s}_{p,q}}$$
and for $s\in (0,2)$,
$$\Big(\int_{0}^{\infty}(t^{2-s}\|(-\Delta_b)u(t,x)\|_{L^{p}})^{q}\frac{\dd t}{t}\Big)^{\frac{1}{q}}\approx \|f\|_{\dot{B}^{s}_{p,q}}.$$
\end{theorem}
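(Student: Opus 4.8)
\emph{Strategy.} The plan is to prove all three equivalences by the same two-step scheme, run in parallel, so I describe it for the quantity built from $\partial_{t}u$ and indicate the (routine) bookkeeping changes for the other two. By density of $\mathcal{S}(\G)$ in $B^{s}_{p,q}(\G)$ it suffices to argue for $f\in\mathcal{S}(\G)$, exactly as at the start of the proof of Theorem~\ref{pp}. Throughout write $\omega_{p}(y)=\|f(\cdot y)-f(\cdot)\|_{L^{p}}$.

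\emph{The inequality ``$\lesssim$''.} Integrating the defining formula for $p_{\alpha}$ and using Theorem~\ref{heatProp}(3) gives $\int_{\G}p_{\alpha}(t,x)\,\dd x=1$ for all $t>0$; since moreover $p_{\alpha}(t,x)=p_{\alpha}(t,x^{-1})$, one gets $\int_{\G}\partial_{t}p_{\alpha}(t,y)\,\dd y=0$, and likewise $\int_{\G}X_{i}p_{\alpha}(t,y)\,\dd y=\int_{\G}\Delta_{b}p_{\alpha}(t,y)\,\dd y=0$ (integrals of derivatives). Hence
\[
\partial_{t}u(t,x)=\int_{\G}\partial_{t}p_{\alpha}(t,y)\bigl(f(y^{-1}x)-f(x)\bigr)\,\dd y ,
\]
and Minkowski's integral inequality yields $\|\partial_{t}u(t,\cdot)\|_{L^{p}}\le\int_{\G}|\partial_{t}p_{\alpha}(t,y)|\,\omega_{p}(y)\,\dd y$, with the analogues for $\nabla_{\G}u$ and $(-\Delta_{b})u$. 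I would then insert the pointwise estimates of Lemma~\ref{lem4.2} in the forms
\[
|\partial_{t}p_{\alpha}(t,y)|\lesssim\frac{t^{2\alpha-1}}{(t^{2}+|y|^{2})^{\frac{Q+2\alpha}{2}}},\qquad
|\nabla_{\G}p_{\alpha}(t,y)|\lesssim\frac{t^{2\alpha}}{(t^{2}+|y|^{2})^{\frac{Q+2\alpha+1}{2}}},\qquad
|\Delta_{b}p_{\alpha}(t,y)|\lesssim\frac{t^{2\alpha}}{(t^{2}+|y|^{2})^{\frac{Q+2\alpha+2}{2}}},
\]
split the $y$-integral into $\{|y|\le t\}$ and $\{|y|\ge t\}$, and apply Lemma~\ref{mur} twice, with $(S_{1},\mu_{1})=\bigl((0,\infty),\tfrac{\dd t}{t}\bigr)$, $(S_{2},\mu_{2})=\bigl(\G,\tfrac{\dd y}{|y|^{Q}}\bigr)$, $\tilde f(y)=|y|^{-s}\omega_{p}(y)$, and the power-weighted indicator kernels $K_{1}(t,y)=t^{-s-Q}|y|^{s+Q}\chi_{\{|y|\le t\}}$ and $K_{2}(t,y)=t^{a}|y|^{-a}\chi_{\{|y|\ge t\}}$, where $a$ equals $2\alpha-s$, $2\alpha+1-s$, $2\alpha+2-s$ respectively. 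The hypotheses of Lemma~\ref{mur} reduce, for $K_{1}$, to $s+Q>0$ (always true), and for $K_{2}$, to $a>0$; the latter is exactly the restriction $s<2\alpha$ of the second statement, and is automatic ($s<1<2\alpha+1$, resp.\ $s<2<2\alpha+2$) in the other two. The computations are identical in spirit to those for $I_{1},I_{2}$ in Theorem~\ref{pp}.

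\emph{The inequality ``$\gtrsim$''.} Here I would use the Calder\'on-type reproducing formula of Lemma~\ref{lemmaoct22} together with the variants for $t^{r}(-\Delta_{b})^{r/2}p_{\alpha}$ recorded in the Remark following it ($r=1,2$): there is $\psi\in L^{1}(\G)$ with $\int_{\G}\psi=0$ and, crucially, $\nabla_{\G}\psi\in L^{1}(\G)$, such that $f=\int_{0}^{\infty}\psi_{t}*g(t,\cdot)\,\tfrac{\dd t}{t}$ in $\mathcal{S}'(\G)$, where $g(t,\cdot)$ denotes $t\,\partial_{t}u(t,\cdot)$, $t\,\nabla_{\G}u(t,\cdot)$ or $t^{2}(-\Delta_{b})u(t,\cdot)$ according to the statement. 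From $[\psi_{t}*g](xy)-[\psi_{t}*g](x)=\bigl(\psi_{t}*(g(\cdot y)-g(\cdot))\bigr)(x)$ (a consequence of left- and right-invariance of Haar measure) one estimates, for $t\le|y|$, by $2\|\psi\|_{L^{1}}\|g(t,\cdot)\|_{L^{p}}$, while for $t\ge|y|$ one transfers the increment onto the kernel so as to gain a factor $t^{-1}$ from $\|\nabla_{\G}\psi_{t}\|_{L^{1}}=t^{-1}\|\nabla_{\G}\psi\|_{L^{1}}$; integrating over $t$ and substituting $g$ back in gives, in the $\partial_{t}$ case,
\[
\omega_{p}(y)\lesssim\int_{0}^{|y|}\|\partial_{t}u(t,\cdot)\|_{L^{p}}\,\dd t+|y|\int_{|y|}^{\infty}\|\partial_{t}u(t,\cdot)\|_{L^{p}}\,\frac{\dd t}{t} .
\]
A final application of Lemma~\ref{mur} --- now with the measure spaces interchanged, $\tilde f(t)=t^{1-s}\|\partial_{t}u(t,\cdot)\|_{L^{p}}$, and kernels $|y|^{-s}t^{s}\chi_{\{t<|y|\}}$, $|y|^{1-s}t^{s-1}\chi_{\{t>|y|\}}$ --- yields $\|f\|_{\dot{B}^{s}_{p,q}}\lesssim\bigl(\int_{0}^{\infty}(t^{1-s}\|\partial_{t}u\|_{L^{p}})^{q}\tfrac{\dd t}{t}\bigr)^{1/q}$, the condition $s<1$ entering through the second kernel. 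The remaining two cases are verbatim, with the weight $t^{1-s}$, resp.\ $t^{2-s}$, on the corresponding $g$.

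\emph{Main obstacle.} The delicate part is the reverse inequality, and within it the step of transferring the translation increment from $f$ onto the Littlewood--Paley kernel in the nonabelian setting: a right translation of a convolution does \emph{not} simply become a (fixed) translation of the kernel, since the relevant shift is a conjugate $w\mapsto wyw^{-1}$ whose size need not be comparable to $|y|$. Handling this requires using that conjugation grows only polynomially on a nilpotent group, together with the regularity and decay of $\psi$ as the convolution kernel of a compactly supported spectral multiplier of $-\Delta_{b}$ --- which is exactly why Lemma~\ref{lemmaoct22} and its Remark (resting on the spectral-multiplier machinery recalled in Section~\ref{preliminaries} and on \cite{Cr}) are set up to produce $\psi\in L^{1}$ with $\nabla_{\G}\psi\in L^{1}$. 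For the first statement there is the additional point that the natural reproducing formula is the vector-valued ($r=1$) one, so that $t\nabla_{\G}u$ genuinely appears rather than $t(-\Delta_{b})^{1/2}u$. Once these points are settled, everything else is bookkeeping with power weights and Lemma~\ref{mur}, in the spirit of the Euclidean arguments of \cite{lenz,candy}.
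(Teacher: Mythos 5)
Your treatment of the three direct inequalities follows the paper's template (moment\nobreakdash-zero kernel, Minkowski, pointwise bounds, then Lemma~\ref{mur} with power\nobreakdash-weighted indicator kernels), and in the $\partial_{t}$ case it is actually more direct than the paper's: you use $\int_{\G}\partial_{t}p_{\alpha}(t,y)\dd y=0$, which does hold (differentiate $\int_{\G}p_{\alpha}(t,\cdot)=1$ in $t$ under the integral sign, legitimate by the bounds of Lemma~\ref{lem4.2}), and your kernel conditions reproduce exactly the restriction $s<2\alpha$. Be aware that the paper asserts the opposite ($\int_{\G}\partial_{t}p_{\alpha}\dd y\neq 0$) and instead routes this case through the extension equation $\partial_{t}(t^{1-2\alpha}\partial_{t}p_{\alpha})=-t^{1-2\alpha}\Delta_{b}p_{\alpha}$ and an extra integration in $t$; your shortcut is defensible but you should justify the differentiation under the integral explicitly, since the whole case hinges on that cancellation.

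The genuine gap is in the reverse inequality, at precisely the step you flag as the ``main obstacle'' --- and the remedy you sketch would not work. With $\psi_{t}$ as the \emph{left} factor, the identity you record, $[\psi_{t}*g](xy)-[\psi_{t}*g](x)=(\psi_{t}*(g(\cdot y)-g))(x)$, puts the increment on $g$, not on $\psi_{t}$; forcing it onto the left factor produces the conjugate $wyw^{-1}$, whose homogeneous norm is not $O(|y|)$ uniformly in $w$ (already in the Heisenberg group $|wyw^{-1}|\gtrsim(|w|\,|y|)^{1/2}$ for horizontal $y$ and large $w$), so ``polynomial growth of conjugation'' cannot recover the factor $|y|t^{-1}$ you need --- at best it would degrade the exponent and it requires pointwise decay of $\nabla_{\G}\psi$ that is not available. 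The correct resolution, which is what the paper's proof implicitly does, is to place $\psi_{t}$ as the \emph{rightmost} factor: convolution kernels of spectral multipliers of $-\Delta_{b}$ commute with one another, so $\psi_{t}*G_{t}=G_{t}*\psi_{t}$ with $G_{t}=t(-\Delta_{b})^{1/2}p_{\alpha}(t,\cdot)$ (resp.\ $t\partial_{t}p_{\alpha}$, $t^{2}\Delta_{b}p_{\alpha}$), whence $f=\int_{0}^{\infty}(f*G_{t})*\psi_{t}\,\frac{\dd t}{t}$ and $f(xy)-f(x)=\int_{0}^{\infty}\bigl((f*G_{t})*[\psi_{t}(\cdot y)-\psi_{t}]\bigr)(x)\frac{\dd t}{t}$ is a genuine right\nobreakdash-translation increment of $\psi_{t}$, controlled by $\min\{2\|\psi\|_{L^{1}},|y|t^{-1}\|\nabla_{\G}\psi\|_{L^{1}}\}$ via Young and the Fundamental Theorem of Calculus inequality from the preliminaries. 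A second, smaller deviation: for the gradient statement you invoke a vector\nobreakdash-valued reproducing formula built from $t\nabla_{\G}p_{\alpha}$, which is not among the variants established in the remark following Lemma~\ref{lemmaoct22}; the paper keeps the scalar $r=1$ formula with $t(-\Delta_{b})^{1/2}u$ and then passes to $\|\nabla_{\G}u\|_{L^{p}}$ by the $L^{p}$\nobreakdash-boundedness of the Riesz transform \cite{BG}. With these two repairs your outline coincides with the paper's argument; the remaining bookkeeping with Lemma~\ref{mur} is correct as you state it.
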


\begin{proof}
We will focus first on the proof of the first part. In this case one inequality is easy to prove but the opposite one needs another ingredient provided by Lemma \ref{lemmaoct22}. First, notice that the fact $h(t,x)=h(t,-x)$ implies
\[
\int_{\G} X_i h(r,x) \dd x=0\qquad \mbox{for all } i=1,\ldots, m \mbox{ and all } r\in (0,\infty).
\]
Hence using the explicit form of $p_{\alpha}$ yields
\[\int_{\G}\nabla_{\G}p_{\alpha}(t,x)\dd x=0.\]
Next we have
$$\nabla_{\G}u(t,x)=\int_{\G}\nabla_{\G}p_{\alpha}(t,y)\Big(f(xy)-f(x)\Big)\dd y.$$
Therefore,
$$\|\nabla_{\G}u\|_{L^{p}}\leq \int_{\G}|\nabla_{\G}p_{\alpha}(t,y)|\omega_{p}(y)\dd y,$$
where $\omega_{p}(y)=\|f(xy)-f(x)\|_{L^{p}}$.
Thus, by Proposition \ref{stimap}
$$t^{1-s}\|\nabla_{\G}u\|_{L^{p}}\lesssim t^{1-s}\int_{|y|\geq t}|y|^{-(Q+1)}\omega_{p}(y)\dd y+\int_{|y|<t}t^{-(Q+s)}\omega_{p}(y)\dd y.$$
Again here, we use the same trick as in (\ref{split}). Indeed, for the first integral we take $K(t,y)=\chi_{|y|>t}\frac{t^{1-s}}{|y|^{1-s}}$ on the spaces $((0.+\infty),\frac{\dd t}{t})$ and $(\G,\frac{\dd y}{|y|^{Q}})$ and $f(y)=\frac{\omega_{p}(y)}{|y|^{s}}$. For the second integral, we take $K(t,y)=\chi_{|y|<t}t^{-(Q+s)}|y|^{Q+s}$ with the same measure spaces and function $f$. We then have
$$\Big(\int_{0}^{\infty}(t^{1-s}\|\nabla_{\G}u(t,x)\|_{L^{p}})^{q}\frac{\dd t}{t}\Big)^{\frac{1}{q}}\lesssim \|f\|_{\dot{B}^{s}_{p,q}}.$$
This proves the first inequality.

For the reverse inequality, we first use Remark 4.5 to see that 
$$f(x)=\int_{0}^{\infty}(\psi_{t}*t(-\Delta_{b})^{\frac{1}{2}}p_{\alpha}*f)(x) \frac{\dd t}{t}.$$
Hence, using Young's inequality for convolutions in the first inequality, 
\begin{align}
&\|f(xy)-f(x)\|_{L^{p}_{x}}\\
&\quad \lesssim \int_{0}^{\infty}t\|\psi_{t}(xy)-\psi_{t}(x)\|_{L^{1}_{x}} \|(-\Delta_{b})^{\frac{1}{2}}u\|_{L^{p}}\frac{\dd t}{t}\notag\\
&\quad \lesssim \int_{0}^{\infty}\chi_{|y|\geq t}t\|\psi\|_{L^{1}}\|(-\Delta_{b})^{\frac{1}{2}}u\|_{L^{p}}\frac{\dd t}{t} +\int_{0}^{\infty} \chi_{|y|\leq t} \|t \nabla_{\G}\psi_{t}\|_{L^{1}}|y|\|(-\Delta_{b})^{\frac{1}{2}}u\|_{L^{p}}\frac{\dd t}{t}\notag\\
&\quad \lesssim \int_{0}^{\infty}\chi_{|y|\geq t}t\|\psi\|_{L^{1}}\|(-\Delta_{b})^{\frac{1}{2}}u\|_{L^{p}}\frac{\dd t}{t} +\int_{0}^{\infty} \chi_{|y|\leq t} \|\nabla_{\G}\psi\|_{L^{1}}|y|\|(-\Delta_{b})^{\frac{1}{2}}u\|_{L^{p}}\frac{\dd t}{t}.\notag\\
\end{align}
Since $\psi$ and $\nabla_{\G}\psi\in L^{1}(\G)$ (see Remark 4.5), we have
\begin{align}
\omega_{p}(y) &\lesssim \int_{0}^{\infty}\chi_{|y|\geq t}t\|(-\Delta_{b})^{\frac{1}{2}}u\|_{L^{p}}\frac{\dd t}{t}+\int_{0}^{\infty} \chi_{|y|\leq t}|y|\|(-\Delta_{b})^{\frac{1}{2}}u\|_{L^{p}}\frac{\dd t}{t}\notag\\
&\lesssim \int_{0}^{\infty}\chi_{|y|\geq t}t\|\nabla_{\G}u\|_{L^{p}}\frac{\dd t}{t}+\int_{0}^{\infty} \chi_{|y|\leq t}|y|\|\nabla_{\G}u\|_{L^{p}}\frac{\dd t}{t}.\notag
\end{align}
Here, we used in the second inequality the continuity of the Riesz transform from $L^{p}$ to $L^{p}$ which gives $\|(-\Delta_{b})^{\frac{1}{2}}u\|_{L^{p}} \lesssim \|\nabla_{\G}u\|_{L^{p}}$ (see \cite{BG}). Hence,
\begin{align}
\Big(\int_{\G}(\frac{\omega_{p}(y)}{|y|^{s}})^{q}\frac{\dd y}{|y|^{Q}}\Big)^{\frac{1}{q}}&\lesssim \Big(\int_{\G}(\int_{0}^{\infty}\frac{t}{|y|^{s}}\chi_{t\leq |y|}\|\nabla_{\G}u\|_{L^{p}} \frac{\dd t}{t})^{q}\frac{\dd y}{|y|^{Q}}\Big)^{\frac{1}{q}}\notag\\
&\qquad+\Big(\int_{\G}(\int_{0}^{\infty}\frac{|y|}{|y|^{s}}\chi_{t\geq |y|}\|\nabla_{\G}u\|_{L^{p}} \frac{\dd t}{t})^{q}\frac{\dd y}{|y|^{Q}}\Big)^{\frac{1}{q}}\notag\\
&\lesssim \Big(\int_{0}^{\infty}t^{q}\|\nabla_{\G}u\|_{L^{p}}^{q}(\int_{|y|\geq t}\frac{1}{|y|^{Q+s}}\dd y)^{q}\frac{\dd t}{t}\Big)^{\frac{1}{q}}\notag\\
&\qquad + \Big(\int_{0}^{\infty}\|\nabla_{\G}u\|_{L^{p}}^{q}(\int_{|y|\leq t}\frac{1}{|y|^{Q-(1-s)}}\dd y)^{q}\frac{\dd t}{t}\Big)^{\frac{1}{q}}\notag\\
&\lesssim \Big(\int_{0}^{\infty}(t^{1-s}\|\nabla_{\G} u\|_{L^{p}})^{q}\frac{\dd t}{t}\Big)^{\frac{1}{q}}.
\end{align}
using the same trick of Lemma 3.3. Let us now move to the second equivalence. We will prove the direct inequality and the reverse one works exactly as in the previous setting. The main difference in this second equivalence, is the fact that $\int_{\G}\partial_{t}p_{\alpha}(t,y)\dd y\not =0$ so we need to do few more manipulations in order to have a similar setting as before. First recall from \cite{Fran}, that $p_{\alpha}$ satisfies the equation $$\partial_{t}(t^{1-2\alpha}\partial_{t}p_{\alpha})+t^{1-2\alpha}\Delta_{b}p_{\alpha}=0; \text{ for } t>0.$$ Hence, we have
$$\int_{\G}\partial_{t}(t^{1-2\alpha}\partial_{t}p_{\alpha})(t,y)\dd y=0.$$
Thus,
\begin{align}
t^{1-2\alpha}\partial_{t}u(t,x)&=\int_{\G}t^{1-2\alpha}\partial_{t}p_{\alpha}(t,y)f(xy)\dd y\notag\\
&=-\int_{\G}\int_{t}^{\infty}\partial_{r}(r^{1-2\alpha}\partial_{r}p_{\alpha})(r,y)(f(xy)-f(x))\dd r \dd y\notag\\
&=\int_{\G}\int_{t}^{\infty}r^{1-2\alpha}\Delta_{b}p_{\alpha}(r,y)(f(xy)-f(x))\dd r \dd y.
\end{align}
It follows then, that
\begin{align}
t^{1-2\alpha}\|\partial_{t}u\|_{L^{p}}&\lesssim \int_{\G}\int_{t}^{\infty}r^{1-2\alpha}|\Delta_{b}p_{\alpha}(r,y)|\omega_{p}(y)\dd r \dd y\notag\\
&\lesssim \int_{|y|<t}\int_{t}^{\infty}r^{1-2\alpha}|\Delta_{b}p_{\alpha}(r,y)|\omega_{p}(y)\dd r \dd y\\
&\qquad +\int_{|y|>t}\int_{t}^{\infty}r^{1-2\alpha}|\Delta_{b}p_{\alpha}(r,y)|\omega_{p}(y)\dd r \dd y\notag\\
&=I+II.
\end{align}
We first estimate $I$. Using the fact that $|y|<t$, we have from Lemma \ref{lem4.2}, that
$$r^{1-2\alpha}|\Delta_{b}p_{\alpha}(r,y)|\lesssim \frac{1}{r^{Q+1+2\alpha}}.$$
Thus, $$I\lesssim \int_{|y|<t}\frac{1}{t^{Q+2\alpha}}\omega_{p}(y)\dd y.$$
In particular, 
$$\int_{0}^{\infty}\Big[t^{2\alpha-s}I\Big]^{q}\frac{\dd t}{t}\lesssim \int_{0}^{\infty}\Big[\int_{\G}\chi_{|y|<t}\frac{1}{t^{Q+s}}\omega_{p}(y)\dd y\Big]^{q}\frac{\dd t}{t}.$$
We use then Lemma 3.3 with the same measure spaces as before, for the kernel $K(t,y)=\chi_{|y|<t}\frac{|y|^{Q+s}}{t^{Q+s}}$ and $f(y)=\frac{\omega_{p}(y)}{|y|^{s}}$. This leads to
$$\Big(\int_{0}^{\infty}\Big[t^{2\alpha-s}I\Big]^{q}\frac{\dd t}{t}\Big)^{\frac{1}{q}}\lesssim \|f\|_{\dot{B}^{s}_{p,q}}.$$
We move now to the second term. Indeed, we have
\begin{align}
II&\lesssim \int_{|y|>t}\int_{0}^{|y|}r^{1-2\alpha}|\Delta_{b}p_{\alpha}(r,y)|\omega_{p}(y)\dd r \dd y\notag\\
&\qquad +\int_{|y|>t}\int_{|y|}^{\infty}r^{1-2\alpha}|\Delta_{b}p_{\alpha}(r,y)|\omega_{p}(y)\dd r \dd y.
\end{align}
Now using again Lemma \ref{lem4.2}, we have that, for $r<|y|$,
$$r^{1-2\alpha}|\Delta_{b}p_{\alpha}(r,y)|\lesssim \frac{r^{1-2\alpha}}{|y|^{Q+2}}.$$
Therefore, since $\alpha<1$, we have
$$\int_{0}^{|y|}r^{1-2\alpha}|\Delta_{b}p_{\alpha}(r,y)|\dd r \lesssim \frac{1}{|y|^{Q+2\alpha}}.$$
Similarly, when $r>|y|$, using Lemma \ref{lem4.2}, we have
\begin{align}
\int_{|y|}^{\infty}r^{1-2\alpha}|\Delta_{b}p_{\alpha}(r,y)|\dd r&\lesssim \int_{|y|}^{\infty}r^{1-2\alpha}\frac{r^{2\alpha}}{(r^{2}+|y|^{2})^{\frac{Q+2\alpha+2}{2}}}\dd r\notag\\
&\lesssim \int_{|y|}^{\infty}\frac{1}{r^{Q+2\alpha+1}}\dd r\notag\\
&\lesssim  \frac{1}{|y|^{Q+2\alpha}}.
\end{align}
Thus, we have
$$t^{2\alpha-s}II\lesssim \int_{\G}\chi_{|y|>t}\frac{t^{2\alpha-s}}{|y|^{Q+2\alpha}}\omega_{p}(y)\dd y.$$
We use now Lemma 3.3 with the same measure spaces as before and $p=q$, for $K(t,y)=\chi_{|y|>t}\frac{t^{2\alpha-s}}{|y|^{2\alpha-s}}$ and $f(y)=\frac{\omega_{p}(y)}{|y|^{s}}$, keeping in mind that the assumptions of Lemma 3.3 hold when $2\alpha>s$, we have
$$\Big(\int_{0}^{\infty}\Big[t^{2\alpha-s}II\Big]^{q}\frac{\dd t}{t}\Big)^{\frac{1}{q}}\lesssim \|f\|_{\dot{B}^{s}_{p,q}}.$$
Therefore, we conclude that
$$\Big(\int_{0}^{\infty}\Big[t^{1-s}\|\partial_{t}u\|_{L^{p}}\Big]^{q}\frac{\dd t}{t}\Big)^{\frac{1}{q}}\lesssim \|f\|_{\dot{B}^{s}_{p,q}}.$$
The proof of the last equivalence, is exactly similar to the first one, hence we omit it.
\end{proof}

\section{Square Function and BMO Bounds}\label{bounds}

\subsection{Square Function Bounds}

The following square function bounds using the Sobolev norm will be useful later in the applications.

\begin{theorem}\label{Oct23}
Let $f\in \mathcal{S}(\G)$ and $1<p<\infty$. Then:

For $-Q<s<1$ we have
$$\left\|\Big(\int_{0}^{\infty}[t^{1-s}|\nabla_{\mathbb{G}}u(t,x)|]^{2}\frac{\dd t}{t}\Big)^{\frac{1}{2}}\right\|_{L^{p}} \lesssim \|(-\Delta_{b})^{\frac{s}{2}}f\|_{L^{p}}.$$

For $s<2\alpha$ we have
$$\left\| \Big(\int_{0}^{\infty}[t^{1-s}|\frac{\partial}{\partial t}u(t,x)|]^{2}\frac{\dd t}{t}\Big)^{\frac{1}{2}}\right\|_{L^{p}}\lesssim \|(-\Delta_{b})^{\frac{s}{2}}f\|_{L^{p}}.$$

For $-Q<s<2$ we have
$$\left\| \Big(\int_{0}^{\infty}[t^{2-s}|\nabla_{\mathbb{G}}^{2}u(t,x)|]^{2}\frac{\dd t}{t}\Big)^{\frac{1}{2}}\right\|_{L^{p}}\lesssim \|(-\Delta_{b})^{\frac{s}{2}}f\|_{L^{p}}.$$

\end{theorem}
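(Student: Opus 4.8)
The three bounds have a parallel structure, so I describe the plan for the first one; the others follow the same scheme with a different spectral multiplier. Fix $f\in\mathcal{S}(\G)$ and set $g=(-\Delta_b)^{s/2}f$, so that the right-hand side is $\|g\|_{L^p}$. The starting point is the spectral representation established in the proof of Lemma~\ref{lemmaoct22}, namely $u(t,\cdot)=p_\alpha(t,\cdot)*f=H_\alpha(t\sqrt{-\Delta_b})f$, where $H_\alpha$ is the continuous function of that proof; it satisfies $H_\alpha(0)=1$, decays exponentially at infinity (by the Laplace-type bound already used in Lemma~\ref{lem4.2}), and has the expansion $H_\alpha(\sigma)=1+c\,\sigma^{2\alpha}+o(\sigma^{2\alpha})$ as $\sigma\to0^+$, all of which can be read off from its integral representation. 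Consequently $(-\Delta_b)^{1/2}u(t,\cdot)=t^{-1}[\sigma H_\alpha(\sigma)]\big|_{\sigma=t\sqrt{-\Delta_b}}f$, and inserting $f=(-\Delta_b)^{-s/2}g$ on the spectral side gives
\[
t^{1-s}(-\Delta_b)^{1/2}u(t,\cdot)=\Phi\big(t\sqrt{-\Delta_b}\big)g,\qquad \Phi(\sigma):=\sigma^{1-s}H_\alpha(\sigma).
\]
Writing $X_j u(t,\cdot)=\big(X_j(-\Delta_b)^{-1/2}\big)\big((-\Delta_b)^{1/2}u(t,\cdot)\big)$, using that the horizontal Riesz transforms are bounded on $L^p$, $1<p<\infty$ \cite{BG}, hence --- being Calder\'on--Zygmund operators --- also on $L^p\big(\G;L^2((0,\infty),\tfrac{\dd t}{t})\big)$, and applying this diagonally in $t$, we reduce the claim to the scalar square function estimate
\[
\Big\|\Big(\int_0^\infty\big|\Phi(t\sqrt{-\Delta_b})g(x)\big|^2\,\tfrac{\dd t}{t}\Big)^{1/2}\Big\|_{L^p}\lesssim\|g\|_{L^p}.
\]

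The $L^2$ case is immediate from the spectral theorem: $\int_0^\infty\|\Phi(t\sqrt{-\Delta_b})g\|_{L^2}^2\,\tfrac{\dd t}{t}=\big(\int_0^\infty|\Phi(\sigma)|^2\,\tfrac{\dd\sigma}{\sigma}\big)\|g\|_{L^2}^2$, and the constant is finite because $\Phi$ decays exponentially at infinity while $\Phi(\sigma)\approx\sigma^{1-s}$ near $0$, so $\int_0|\Phi|^2\,\tfrac{\dd\sigma}{\sigma}<\infty$ precisely for $s<1$ --- this is where the range restriction enters. For general $1<p<\infty$ the plan is to realize $g\mapsto\big(\Phi(t\sqrt{-\Delta_b})g\big)_{t>0}$ as convolution with the $L^2(\tfrac{\dd t}{t})$-valued kernel $\big(K_{\Phi^t}\big)_{t>0}$, where by \eqref{stimafond} $K_{\Phi^t}(x)=t^{-Q}K_{\widehat{\Phi}}(\delta_{1/t}(x))$ with $\widehat{\Phi}(\lambda)=\Phi(\sqrt\lambda)$, and to verify the vector-valued H\"ormander regularity condition. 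The needed pointwise bounds on $K_{\widehat{\Phi}}$ and its horizontal derivatives --- decay of order $|x|^{-(Q+1-s)}$ at infinity, improving by one power for each extra derivative --- are obtained from the subordination integral defining $H_\alpha$ together with the Gaussian heat-kernel estimates of Theorem~\ref{heatProp}, by exactly the argument used for $p_\alpha$ and $\partial_t p_\alpha$ in Proposition~\ref{stimap} and Lemma~\ref{lem4.2}; the extra factor $\lambda^{-s/2}$ (for $s>0$, convolution by a kernel of type $s$) only replaces the exponential tail of $p_\alpha$ by this mild polynomial one, which still suffices since $s<1$. The $L^p$ bound for $1<p\le 2$ then follows from vector-valued Calder\'on--Zygmund theory, and duality plus interpolation give $2\le p<\infty$. (Since $-\Delta_b$ has Gaussian heat-kernel bounds and a bounded $H^\infty$-calculus on $L^p$, one may instead invoke the general Littlewood--Paley--Stein theory.)

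For the estimate involving $\partial_t u$ one uses, again from the proof of Lemma~\ref{lemmaoct22}, that $t\,\partial_t u(t,\cdot)=\widetilde{H}_{\alpha}(t\sqrt{-\Delta_b})f$ with $\widetilde{H}_{\alpha}(\sigma)=\sigma H_\alpha'(\sigma)$; no Riesz transform is required, and the relevant multiplier is $\widetilde{\Phi}(\sigma)=\sigma^{-s}\widetilde{H}_{\alpha}(\sigma)$. From $H_\alpha(\sigma)=1+c\,\sigma^{2\alpha}+o(\sigma^{2\alpha})$ one gets $\widetilde{H}_{\alpha}(\sigma)\approx\sigma^{2\alpha}$, hence $\widetilde{\Phi}(\sigma)\approx\sigma^{2\alpha-s}$ near $0$, and $\int_0|\widetilde{\Phi}|^2\,\tfrac{\dd\sigma}{\sigma}<\infty$ exactly for $s<2\alpha$, which is the stated range. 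For the estimate involving $\nabla_\G^2 u$ one replaces $\nabla_\G^2$ by $-\Delta_b$ via the $L^p$-boundedness (scalar and vector-valued) of the second-order Riesz transforms $X_iX_j(-\Delta_b)^{-1}$, and the multiplier becomes $\sigma^{2-s}H_\alpha(\sigma)$, which lies in $L^2(\tfrac{\dd\sigma}{\sigma})$ for $s<2$. In all three cases, when $s\le0$ the substitution $g=(-\Delta_b)^{s/2}f$ must be read on the spectral side rather than as convolution with a kernel of type $|s|$, and the lower constraint $s>-Q$ is used only to guarantee that $(-\Delta_b)^{s/2}f$ lies in $L^p$ for $f\in\mathcal{S}(\G)$.

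The main obstacle is the step $1<p<\infty$: establishing Calder\'on--Zygmund bounds for the $L^2(\tfrac{\dd t}{t})$-valued convolution kernel that are uniform in $t$. This requires quantitative decay and smoothness estimates for $H_\alpha$ and all of its derivatives at infinity, together with its precise vanishing order at the origin, and in the absence of a Fourier transform these have to be extracted from the subordination formula and the Gaussian bounds. The saving grace is that this is essentially the computation already carried out for $p_\alpha$ in Proposition~\ref{stimap} and Lemma~\ref{lem4.2}, so the remaining work is mostly bookkeeping rather than new analytic input.
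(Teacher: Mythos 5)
Your proposal is correct and follows essentially the same route as the paper: the paper also factors $t^{1-s}\nabla_{\G}u=\phi_{t}*(-\Delta_{b})^{\frac{s}{2}}f$ with $\phi_{t}$ a dilation of a fixed kernel built from $p_{\alpha}$ (Proposition \ref{propphit} and \eqref{phit}), proves the $L^{2}$ bound, and then obtains $1<p<\infty$ by viewing $t\mapsto\phi_{t}(x)$ as an $L^{2}(\frac{\dd t}{t})$-valued Calder\'on--Zygmund kernel whose decay comes from the pointwise estimates of Proposition \ref{stimap} and Lemma \ref{lem4.2} (Proposition \ref{sg}). The only cosmetic differences are that you pre-compose with (first- and second-order) Riesz transforms so as to work with scalar spectral multipliers $\Phi(t\sqrt{-\Delta_b})$ and get the $L^{2}$ bound from the spectral theorem (which transparently explains the thresholds $s<1$, $s<2\alpha$, $s<2$), whereas the paper keeps $\nabla_{\G}$ inside the kernel $\phi$ and derives the $L^{2}$ bound from the Calder\'on reproducing kernel $\int_{0}^{\infty}\phi_{t}*\phi_{t}\frac{\dd t}{t}$ being a kernel of type $(0,2)$.
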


In order to proceed with the proof of Theorem \ref{Oct23}, we first need to recall a few important properties of square functions. For further details, we refer the reader to \cite{FS,S}.

Let $\phi\in \mathcal{S}(\G)$ be such that $\int_{\G}\phi \dd x=0$ and $\phi_{t}(x)=t^{-Q}\phi(\delta_{\frac{1}{t}}x)$. Then we define the square functions
\begin{equation}\label{squareS}
S_{\phi}^{\beta}f(x):=\Big(\int_{0}^{\infty}\int_{|x^{-1}y|<\beta t}|f*\phi_{t}(y)|^{2}t^{-Q-1}\dd y\dd t\Big)^{\frac{1}{2}}
\end{equation}
where $\beta>0$ and
\begin{equation}\label{squareg}
g_{\phi}f(x):=\Big(\int_{0}^{\infty}|f*\phi_{t}|^{2}\frac{\dd t}{t}\Big)^{\frac{1}{2}}.
\end{equation}
In \cite{FS}, the authors proved the $L^{p}$ boundedness of these operators. More precisely, they show that for $0<p<\infty$, $g_{\phi}$ and $S_{\phi}^{\beta}$ are bounded from the Hardy space $H^{p}(\G)$ to $L^{p}(\G)$.  From now on, we will write $S_{\phi}$ for $S_{\phi}^{\beta}$. In order to use this result, we need to relax the assumption $\phi\in \mathcal{S}(\G)$. We will need these bounds for some specific functions $\phi$ which are not in $\mathcal{S}(\G)$.

\begin{proposition}\label{propphit}
Let $s>-1$ and $\phi(x)=\nabla_{\mathbb{G}}(-\Delta_{b})^{\frac{s}{2}}p_{\alpha}(1,x)$ which implies the formula $\phi_{t}(x)=t^{1+s}\nabla_{\mathbb{G}}(-\Delta_{b})^{\frac{s}{2}}p_{\alpha}(t,x)$.
Then the function $K_{a}^{b}$ defined by
$$K_{a}^{b}(x)=\int_{a}^{b}\phi_{t}*\phi_{t}(x) \frac{\dd t}{t}$$
converges as $a\to 0$ and $b\to \infty$ to a function $K$ in $\mathcal{S}'(\G)$ that is smooth on $\G\setminus \{0\}$ and homogeneous of degree $-Q$ around zero.
\end{proposition}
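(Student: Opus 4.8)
The plan is to exploit the exact dilation behaviour of $\phi_t$ to reduce the claim to size and cancellation properties of $\Phi:=\phi*\phi$ (where, $\phi$ being vector valued, $\phi*\phi:=\sum_{i=1}^{m}\phi^{(i)}*\phi^{(i)}$), the vanishing mean of $\phi$ acting as a Calder\'on-type reproducing device. First, by the homogeneity of $p_\alpha$ (cf.\ Proposition \ref{stimap}), of the horizontal fields $X_i$ (degree $1$), and of $(-\Delta_b)^{s/2}$ (degree $s$, cf.\ \eqref{identityC}), the function $\phi_t$ of the statement satisfies $\phi_t(x)=t^{-Q}\phi(\delta_{1/t}(x))$, in accordance with the convention $\psi_t(x)=t^{-Q}\psi(\delta_{1/t}(x))$; hence $(\phi_t*\phi_t)(x)=t^{-Q}\Phi(\delta_{1/t}(x))$, and the substitution $t\mapsto\lambda t$ gives $K_a^b(\delta_\lambda(x))=\lambda^{-Q}K^{b/\lambda}_{a/\lambda}(x)$ for all $\lambda>0$. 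Once convergence of $K_a^b$ is established, letting $a\to0^+$ and $b\to+\infty$ in this identity shows that the limit $K$ is homogeneous of degree $-Q$, even as a tempered distribution, since the regularisation is itself scale covariant.

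The first technical ingredient is that $\phi\in L^1(\G)\cap C^\infty(\G)$, that $\int_\G\phi=0$, and that there is $\varepsilon_0=\varepsilon_0(\alpha,s)>0$ (with $\varepsilon_0\to0$ as $s\to-1^+$) such that $|\nabla_\G^j\phi(x)|\lesssim_j(1+|x|)^{-Q-\varepsilon_0-j}$ for every $j\in\N$. This is obtained by combining Lemma \ref{lem4.2} for the factor $p_\alpha(1,\cdot)$ with the kernel-of-type estimates recalled in Section \ref{preliminaries} for $(-\Delta_b)^{s/2}$ (whose convolution kernel is $\tilde{R}_{-s}$, resp.\ $R_{|s|}$ when $s<0$): the exponent $\varepsilon_0$ records the slow tail produced by the nonlocal operator, and the hypothesis $s>-1$ is exactly what keeps $\varepsilon_0>0$. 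The cancellation $\int_\G\phi=0$ holds because $\phi=\nabla_\G\big((-\Delta_b)^{s/2}p_\alpha(1,\cdot)\big)$ is a horizontal gradient of a $C^1$, $L^1$ function vanishing at infinity, and each $X_i$ is divergence free for the Haar measure. Convolving then yields $\Phi\in C^\infty(\G)$, $\int_\G\Phi=\big(\int_\G\phi\big)^2=0$, and, expanding one factor by Taylor against the mean-zero other, the bounds $|\nabla_\G^j\Phi(y)|\lesssim_j(1+|y|)^{-Q-\varepsilon_0-j}$.

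To prove convergence, fix $f\in\mathcal{S}(\G)$. Using $\int_\G\Phi=0$ and the change of variable $x=\delta_t(y)$,
\[
\langle\phi_t*\phi_t,f\rangle=\int_\G\Phi(y)\big(f(\delta_t(y))-f(0)\big)\dd y,
\]
which is $O\big(t^{\min\{1,\varepsilon_0\}}\big)$ as $t\to0^+$ and $O(t^{-Q})$ as $t\to+\infty$, with constants bounded by finitely many Schwartz seminorms of $f$. Hence $\int_0^\infty|\langle\phi_t*\phi_t,f\rangle|\,\frac{\dd t}{t}<\infty$, so $K_a^b\to K$ in $\mathcal{S}'(\G)$, and $K$ is homogeneous of degree $-Q$ by the first step. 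For a fixed $x\ne0$, writing $|\delta_{1/t}(x)|=|x|/t$ and splitting the $t$-integral at $t=|x|$, the decay of $\nabla_\G^j\Phi$ above dominates $\nabla_\G^j(\phi_t*\phi_t)(x)=t^{-Q-j}(\nabla_\G^j\Phi)(\delta_{1/t}(x))$ locally uniformly near $x$, so one may differentiate under the integral sign; this shows that $K$ coincides on $\G\setminus\{0\}$ with a smooth function, which is then itself homogeneous of degree $-Q$.

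The main obstacle is the pointwise analysis of $\phi$ and of the derivatives of $\Phi$ in the second step: since $(-\Delta_b)^{s/2}$ is nonlocal and $p_\alpha(1,\cdot)$ decays only polynomially (it is not Schwartz), one has to control precisely the tail that the fractional operator creates and to check that horizontal differentiation still gains a power of decay, which requires some care in the convolution estimates defining $\Phi$. The other place where cancellation is genuinely essential is the convergence as $t\to0^+$: it rests entirely on $\int_\G\phi=0$, without which the $\frac{\dd t}{t}$-integral would diverge logarithmically at the origin.
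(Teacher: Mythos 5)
Your overall architecture is close to the paper's: both proofs reduce the claim to pointwise, homogeneity-driven bounds on a fixed profile and then integrate in $t$. Your treatment of the $\mathcal{S}'$-convergence (pairing against a test function, using $\int_{\G}\Phi=0$ to get $O(t^{\min\{1,\varepsilon_0\}})$ near $0$ and $O(t^{-Q})$ near $\infty$) is fine and in fact more explicit than what the paper writes. The identification $\varepsilon_0=1+s$ and the role of the hypothesis $s>-1$ are also correct.

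There is, however, a genuine gap at the estimate $|\nabla_{\G}^{j}\Phi(y)|\lesssim (1+|y|)^{-Q-\varepsilon_0-j}$, which you justify by ``expanding one factor by Taylor against the mean-zero other.'' A first-order expansion against a single vanishing moment gains exactly one power of decay, independently of $j$: writing $\nabla_{\G}^{j}\Phi=\phi*\nabla_{\G}^{j}\phi$ and using $\int\nabla_{\G}^{j}\phi=0$ gives at best $|x|^{-Q-\varepsilon_0-1}$, not $|x|^{-Q-\varepsilon_0-j}$, and the naive convolution bound (decay of $f*g$ equals the worse of the two decays) gives only $|x|^{-Q-\varepsilon_0}$. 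This matters: in your differentiation-under-the-integral step the $t\to 0$ piece is $\int_0^{|x|}t^{\varepsilon_0-j-1}\,\dd t$ if you only have the weaker decay, and this diverges for every $j\geq 1$ since $\varepsilon_0=1+s\leq 1$; so without the full $j$-power gain the smoothness of $K$ on $\G\setminus\{0\}$ does not follow. To repair it along your lines you would need a stratified Taylor expansion of order $j$ together with the fact that $\nabla_{\G}^{j}\phi$ annihilates all polynomials of homogeneous degree $<j$ — a real argument that must be written out. The paper avoids the issue entirely by combining the two gradients and the two fractional powers across the convolution, writing $\phi_t*\phi_t(x)=t^{2+2s}(-\Delta_b)^{s+1}(p_{\alpha}*p_{\alpha})(t,x)$, and then exploiting the \emph{joint} $(t,x)$-homogeneity of this two-variable kernel exactly as in Proposition \ref{stimap}: every derivative $D^{\beta}$ is jointly homogeneous of degree $-(Q+2s+2+|\beta|)$, so the decay $|x|^{-(Q+2s+2+|\beta|)}$ for $t\leq|x|$ comes for free from a sup over the unit sphere, and absolute convergence of $\int_0^{\infty}t^{1+2s}(\cdots)\,\dd t$ for $x\neq 0$ requires precisely $1+2s>-1$, i.e.\ $s>-1$. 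I would recommend adopting that reduction for the pointwise part while keeping your distributional convergence argument.
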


\begin{proof}
The formula $\phi_{t}(x)=t^{1+s}\nabla_{\mathbb{G}}(-\Delta_{b})^{\frac{s}{2}}p_{\alpha}(t,x)$ follows from the homogeneity of $p_{\alpha}$. Next notice that $p_{\alpha}*p_{\alpha}(t,x)=t^{-Q}p_{\alpha}*p_{\alpha}(1,\frac{x}{t})$. Indeed, this follows from the property that $f_{t}*g_{t}=(f*g)_{t}$. Here the convolution is only on the $x$ variable, while the scaling is in the $t$ variable). Now notice that
\[K_{a}^{b}(x)=\int_{a}^{b}t^{1+2s} (-\Delta_{b})^{s+1}(p_{\alpha}*p_{\alpha})(t,x)\dd t.\]
But then one can see, using Proposition 4.1 and an interpolation inequality of the form
\[\|(-\Delta_{b})^{1+s}u\|_{L^{\infty}(\Omega)}\lesssim \|\Delta_{b}u\|_{L^{\infty}(\Omega)}^{\theta}\|(-\Delta_{b})^{2}u\|_{L^{\infty}(\Omega)}^{1-\theta}\]
with $\Omega =\{R\leq |x|\leq 2R\}$, that
$$|(-\Delta_{b})^{s+1}p_{\alpha}*p_{\alpha}|\lesssim \left\{\begin{array}{ll}
|x|^{-(Q+2s+2)} &\quad \text{if} \quad  t\leq |x|\\
t^{-(Q+2s+2)} &\quad \text{if}\quad  |x|\leq t.
\end{array}
\right.$$
Therefore, for $|x|>0$, $t^{1+2s}(-\Delta_{b})^{s+1}p_{\alpha}*p_{\alpha}(t,x)=O(t^{1+2s})$ near zero and $t^{1+2s}(-\Delta_{b})^{s+1}p_{\alpha}*p_{\alpha}(t,x)=O(t^{-Q-1})$ near $\infty$. 
Thus, as long as $1+2s>-1$, the integral converges absolutely to a smooth function on $\G\setminus\{0\}$. Moreover, if we let $K=\lim_{a\to 0;b\to \infty}K_{a}^{b}$, we have that $K(rx)=r^{-Q}K(x)$ which finishes the proof. 
\end{proof}

One also has the same result for 
\begin{equation}\label{phit}
\phi_{t}=\left\{\begin{array}{ll}
t^{1+s}\nabla_{\mathbb{G}}(-\Delta_{b})^{\frac{s}{2}}p_{\alpha} & \quad \text{if} \quad s>-1,\\
t^{1+s}(-\Delta_{b})^{\frac{s}{2}}\frac{\partial }{\partial t}p_{\alpha} &\quad \text{if} \quad s>-2\alpha,\\
t^{2+s}(-\Delta)^{\frac{s}{2}}(-\Delta_{b})p_{\alpha} &\quad \text{if} \quad s>-2,\\
t^{2+s}\nabla_{\mathbb{G}}(-\Delta_{b})^{\frac{s}{2}}\frac{\partial }{\partial t}p_{\alpha} &\quad \text{if} \quad s>-1-2\alpha.
\end{array}
\right.
\end{equation}
Recall the square functions $g_{\phi}$ and $S_{\phi}^{1}=S_{\phi}$ defined in \eqref{squareS} and \eqref{squareg}.

\begin{proposition}\label{sg}
Let $\phi_{t}$ defined in (\ref{phit}). Then $S_{\phi}$ and $g_{\phi}$ are bounded from $L^{p}$ to $L^{p}$, for $1<p<\infty$.
\end{proposition}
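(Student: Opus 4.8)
The plan is to reduce the boundedness of $S_\phi$ and $g_\phi$ for the non-Schwartz functions $\phi$ in \eqref{phit} to the classical $H^p$-to-$L^p$ boundedness of square functions from \cite{FS}, via a self-improvement/duality argument built on Proposition \ref{propphit}. The key point is that, although each $\phi$ in \eqref{phit} fails to lie in $\mathcal{S}(\G)$, Proposition \ref{propphit} (and its analogues for the other three choices of $\phi_t$) tells us that the ``diagonal'' kernel $K=\int_0^\infty \phi_t*\phi_t\,\frac{\dd t}{t}$ is a Calder\'on--Zygmund kernel: smooth away from $0$, homogeneous of degree $-Q$, and with vanishing mean in the principal value sense (the latter because each factor $\phi_t$ has mean zero, using $\int_\G \nabla_\G p_\alpha(t,\cdot)=0$ and $\int_\G\partial_t(t^{1-2\alpha}\partial_t p_\alpha)(t,\cdot)=0$ as established in the proof of Theorem \ref{Oct31}). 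Hence the operator $T_K f=f*K$ is bounded on $L^p(\G)$ for $1<p<\infty$ by the standard singular integral theory on Carnot groups \cite{FS,S}.

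The main step is then the identity, valid for $f\in\mathcal{S}(\G)$,
\[
\|g_\phi f\|_{L^2}^2=\int_\G\int_0^\infty |f*\phi_t(x)|^2\,\frac{\dd t}{t}\,\dd x=\langle f*K, f\rangle_{L^2}
\]
(using Fubini and $\widetilde{\phi_t}*\phi_t$ symmetry, where $\phi_t$ is even up to inversion because $p_\alpha(t,\cdot)$ is), which gives $L^2$-boundedness of $g_\phi$ immediately, and then a vector-valued Calder\'on--Zygmund argument to pass from $L^2$ to $L^p$ for all $1<p<\infty$: one views $g_\phi f(x)=\|Rf(x)\|_{L^2((0,\infty),\dd t/t)}$ where $R$ is the operator-valued convolution with the kernel $x\mapsto (\phi_t(x))_{t>0}$, and checks the H\"ormander condition $\int_{|x|\geq 2|y|}\|\phi_t(y^{-1}x)-\phi_t(x)\|_{L^2(\dd t/t)}\,\dd x\lesssim 1$ using the pointwise gradient bounds from Lemma \ref{lem4.2} and Proposition \ref{stimap} (for the $\nabla_\G$ and $(-\Delta_b)$ versions, the higher-order analogues from \eqref{phit}). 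This H\"ormander estimate, together with $L^2$-boundedness, yields weak $(1,1)$ and hence, by interpolation and duality, strong $(p,p)$ for $1<p<\infty$. For the area integral $S_\phi$, one uses the standard pointwise comparison $\|S_\phi f\|_{L^p}\approx \|g_{\tilde\phi} f\|_{L^p}$ for $1<p<\infty$ — more precisely, $\|S_\phi f\|_{L^p}\lesssim \|g_\phi f\|_{L^p}$ follows from Fubini after integrating the defining integral of $S_\phi$ in the $y$ variable over $|x^{-1}y|<\beta t$ (a ball of measure $\approx t^Q$), which exactly cancels the weight $t^{-Q-1}$ and returns $g_\phi$; the reverse inequality, not needed here, would require a bit more care.

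The main obstacle I expect is verifying the H\"ormander-type regularity condition $\int_{|x|\gtrsim|y|}\|\phi_t(y^{-1}x)-\phi_t(x)\|_{L^2(\dd t/t)}\,\dd x\lesssim 1$ uniformly in $y$, because $\phi_t$ involves fractional powers $(-\Delta_b)^{s/2}$ applied to the Poisson kernel, and the pointwise estimates of Lemma \ref{lem4.2} are stated only for $p_\alpha$ and its integer $t$- and horizontal derivatives. One would need the corresponding bounds $|D^\beta(-\Delta_b)^{s/2}p_\alpha(t,x)|\lesssim t^{\sigma}(t+|x|)^{-(Q+|\beta|+\sigma)}$ for the relevant $\sigma$, which can be obtained either by writing $(-\Delta_b)^{s/2}p_\alpha$ through the subordination/Balakrishnan formula and differentiating under the integral sign as in Proposition \ref{stimap} and Lemma \ref{lem4.2}, or by invoking the kernel-of-type estimates of Proposition \ref{above} and \eqref{sx} together with $R_{-s}*\,\cdot\,$; combined with a mean value inequality along horizontal curves (the Fundamental Theorem of Calculus estimate recalled in Section \ref{preliminaries}) this gives the required difference bound. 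Once these pointwise estimates are in hand, the rest is the routine vector-valued singular integral machinery, and the conclusion follows for $1<p<\infty$.
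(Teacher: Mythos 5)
Your treatment of $g_{\phi}$ follows essentially the same route as the paper: the $L^{2}$ bound comes from pairing $\|g_{\phi}f\|_{L^{2}}^{2}=\langle f*K,f\rangle$ with the kernel $K=\int_{0}^{\infty}\phi_{t}*\phi_{t}\,\frac{\dd t}{t}$ of Proposition \ref{propphit} (a kernel of type $(0,2)$, hence $L^{2}$-bounded), and the passage to $L^{p}$ is a vector-valued singular integral argument for the $L^{2}((0,\infty),\dd t/t)$-valued kernel $x\mapsto(\phi_{t}(x))_{t>0}$. The paper verifies the pointwise kernel-of-type estimate $\|D^{\beta}\Phi(x)\|_{X}\lesssim |x|^{-(Q+|\beta|)}$ where you propose the H\"ormander integral condition; these are interchangeable, and your concern about extending the pointwise bounds of Lemma \ref{lem4.2} to $(-\Delta_{b})^{s/2}p_{\alpha}$ is legitimate and is exactly the point the paper handles (tersely) via the interpolation inequality in Proposition \ref{propphit}.

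There is, however, a genuine gap in your treatment of $S_{\phi}$. Integrating the defining expression of $S_{\phi}f(x)^{2}$ in $x$ and applying Fubini yields only the identity $\|S_{\phi}f\|_{L^{2}}^{2}\approx\|g_{\phi}f\|_{L^{2}}^{2}$; it does not produce a pointwise domination $S_{\phi}f(x)\lesssim g_{\phi}f(x)$ (none holds, since $S_{\phi}f(x)$ samples $f*\phi_{t}$ at points $y\neq x$), nor an $L^{p}$ inequality for $p\neq 2$ (you cannot exchange the outer $L^{p/2}$ norm with the inner $\dd y$ integral). The equivalence $\|S_{\phi}f\|_{L^{p}}\approx\|g_{\phi}f\|_{L^{p}}$ for $p\neq 2$ is itself a theorem requiring proof. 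The repair, and what the paper's closing sentence ``a similar bound holds for $S_{\phi}^{\beta}$'' refers to (following Folland--Stein), is to run the same vector-valued machinery directly on $S_{\phi}$: write $S_{\phi}f(x)=\|(f*\Psi)(x)\|_{Y}$ with $Y=L^{2}(\{(t,z):|z|<\beta t\},\,t^{-Q-1}\dd z\,\dd t)$ and $\Psi(v)(t,z)=\phi_{t}(vz)$, use the Fubini identity above for the $L^{2}\to L^{2}_{Y}$ bound, and verify the $Y$-valued kernel-of-type (or H\"ormander) condition from the same pointwise estimates you already invoke; this gives $L^{p}$ boundedness for $1<p<\infty$. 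With that substitution your argument is complete and matches the paper's.
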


\begin{proof}
We will follow here the proof in \cite{FS} for the case $\phi\in \mathcal{S}(\G)$ and we will present it for $\phi_{t}$ as in Proposition \ref{propphit} since the proof is similar for the remaining functions in (\ref{phit}). Indeed, one first proves the $L^{2}$ bound, that is 
\begin{align}
\|g_{\phi}f\|^{2}_{L^{2}}&=\int_{\G}\int_{0}^{\infty}f*\phi_{t} f*\phi_{t} \frac{\dd t}{t}\notag\\
&=\int_{\G}f*K(x)f(x)\dd x\leq \|K*f\|_{L^{2}}\|f\|_{L^{2}}.
\end{align}
But, from Proposition 3.2, $K$ is a kernel of type $(0,2)$, thus we have that
$$\|K*f\|_{L^{2}}\lesssim \|f\|_{L^{2}}.$$
Therefore $$\|g_{\phi}\|_{L^{2}}\lesssim \|f\|_{L^{2}}.$$
We define the space $X=L^{2}((0,\infty),\frac{\dd t}{t})$ and the $X-$valued distribution $\Phi$ defined for $f\in \mathcal{S}(\G)$ by
$$\langle \Phi,f\rangle (t)=\int_{G}f(x)\phi_{t}(x)\dd x.$$
We claim that this distribution is well defined. Indeed, we have
$$|\langle \Phi,f\rangle (t)|\leq \frac{1}{t^{Q}}\|\phi\|_{L^{\infty}}\|f\|_{L^{1}}.$$
Next, we notice that since $\int \phi =0$ we have that
$$|\langle \Phi,f\rangle (t)|\leq \int_{\G}|f(tx)-f(0)|\phi(x)\dd x.$$
Since $f\in \mathcal{S}(\G)$ we see that 
\[t\mapsto \Big|\frac{\langle \Phi,f\rangle (t)}{t}\Big|\]
is bounded near zero, therefore $\langle \Phi,f\rangle \in L^{2}((0,\infty),\frac{\dd t}{t})$. Hence, $g_{\phi}f(x)$ is well defined and 
$$g_{\phi}f(x)=\|f*\Phi\|_{X}.$$
And so far we have proved that $g_{\phi}$ is bounded from $L^{2}$ to $L^{2}_{X}$.  Moreover, if we look at $\Phi(x)(t)=\phi_{t}(x)$ we have that
\begin{align}
\|D^{\beta}\Phi(x)\|_{X}^{2}&=\int_{0}^{\infty}|t^{1+s}D^{\beta+1}(-\Delta_{b})^{\frac{s}{2}}p_{\alpha}(t,x)|^{2}\frac{\dd t}{t}\notag\\
&\lesssim \int_{0}^{|x|}t^{1+2s}|x|^{-2(Q+\beta+s+1)}\dd t+\int_{|x|}^{\infty}t^{1+2s}t^{-2(Q+\beta+1+s)}\dd t\notag\\
&\lesssim |x|^{-2(Q+\beta)}\notag
\end{align}
Hence $\Phi$ is an $X$-valued kernel of type $(0,r)$ for al $r>0$ which leads to the boundedness of $f*\Phi$ from $L^{p}$ to $L^{p}_{X}$ for $1<p<\infty$. Thus
$$\|g_{\phi}f\|_{L^{p}}\lesssim \|f\|_{L^{p}}.$$
A similar bound holds for the operator $S_{\phi}^{\beta}$.
\end{proof}

\begin{proof}[Proof of Theorem \ref{Oct23}]
The proof of Theorem \ref{Oct23} now is a straightforward consequence of Proposition \ref{sg}. First, we write 
$$t^{1-s}\nabla_{\G}u(t,x)=t^{1-s}\nabla_{\mathbb{G}}p_{\alpha}*f=t^{1-s}\nabla_{\mathbb{G}}(-\Delta_{b})^{-\frac{s}{2}}p_{\alpha}*(-\Delta_{b})^{\frac{s}{2}}f.$$
Applying Proposition \ref{sg}, we have the desired result for $ s<1$. 
\end{proof}

\subsection{BMO Bounds}

Next we provide some equivalent characterizations of the BMO norm that will be useful in the coming applications. First, given a function $f\in L^{1}_{loc}(\G)$ and $B$ a ball in $\G$, we define
$$m_{B}=\frac{1}{|B|}\int_{B}f(x)\dd x.$$
Let $\mathcal{B}$ be the collection of all the balls in $\G$. A function $f$ is said to be in $BMO$ if 
$$[f]_{BMO}=\sup_{B\in \mathcal{B}}\frac{1}{|B|}\int_{B}|f(x)-m_{B}|\dd x<\infty.$$
We recall next the characterization of the BMO norm using the Carleson measure. If we denote by $$T(B_{r}(x_{0}))=\{(t,x)\in \R^{+}\times \G: |x_{0}^{-1}x|<r-t\},$$
then we have the following proposition \cite{S}.

\begin{proposition}
Let $f\in \mathcal{S}(\G)$ and $\phi \in \mathcal{S}(\G)$ be such that $\int_{\G}\phi(x)\dd x=0$. Then
$$\sup_{B\in \mathcal{B}}\frac{1}{|B|}\int_{T(B)}|f*\phi_{t}|^{2}\frac{\dd t\dd x}{t}\lesssim [f]_{BMO}^{2}.$$
If we assume the existence of $\psi \in \mathcal{S}(\G)$, with $\int_{\G}\psi \dd x=0$ and $\int_{0}^{\infty}\phi_{t}*\psi_{t}\frac{\dd t}{t}=\delta_{0}$, then
$$\sup_{B\in \mathcal{B}}\left(\frac{1}{|B|}\int_{T(B)}|f*\phi_{t}|^{2}\frac{\dd t\dd x}{t}\right)^{\frac{1}{2}}\approx [f]_{BMO}.$$
\end{proposition}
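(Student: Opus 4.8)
The plan is to deduce the first (upper) Carleson bound from a John–Nirenberg/duality argument, and then the reverse equivalence from the reproducing formula $\int_0^\infty \phi_t*\psi_t\,\tfrac{\dd t}{t}=\delta_0$ together with the $H^1$–$BMO$ duality of Folland–Stein. Throughout, $\phi_t(x)=t^{-Q}\phi(\delta_{1/t}x)$ and the measure $\mathrm{d}\mu=|f*\phi_t|^2\,\tfrac{\dd t\,\dd x}{t}$ on $\R^+\times\G$; the statement asserts that $\mu$ is a Carleson measure with norm controlled by $[f]_{BMO}^2$, and conversely.

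For the upper bound, first I would fix a ball $B=B_r(x_0)$ and split $f=(f-m_{2B})\chi_{2B}+(f-m_{2B})\chi_{(2B)^c}+m_{2B}=:f_1+f_2+m_{2B}$. Since $\int_\G\phi=0$, the constant $m_{2B}$ contributes nothing: $m_{2B}*\phi_t=0$. For $f_1$, I use that $\phi\in\mathcal S(\G)$, so $g_\phi$ (as in \eqref{squareg}) and hence the full tent integral is $L^2$-bounded — more precisely $\int_{T(B)}|f_1*\phi_t|^2\tfrac{\dd t\,\dd x}{t}\le \|g_\phi f_1\|_{L^2}^2\lesssim \|f_1\|_{L^2}^2=\int_{2B}|f-m_{2B}|^2$, and by John–Nirenberg this last integral is $\lesssim |B|\,[f]_{BMO}^2$. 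For $f_2$, the point $(t,x)\in T(B)$ forces $t<r$ and $|x_0^{-1}x|<r$, while $f_2$ is supported outside $2B$; using the rapid decay of $\phi$ together with the standard annular decomposition $2^{k+1}B\setminus 2^kB$ and the mean-zero cancellation of $\phi_t$, one estimates $|f_2*\phi_t(x)|\lesssim \sum_{k\ge 1}(t/2^k r)^{N}\,\tfrac{1}{|2^kB|}\int_{2^{k+1}B}|f-m_{2B}|$, and since $\tfrac{1}{|2^kB|}\int_{2^{k+1}B}|f-m_{2B}|\lesssim k\,[f]_{BMO}$ the $t$-integration over $(0,r)$ produces a convergent sum bounded by $|B|\,[f]_{BMO}^2$. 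Adding the two pieces and dividing by $|B|$ gives the first inequality. I expect the $f_2$ term — getting honest decay for the off-diagonal convolution in the Carnot-group setting with only a homogeneous norm available — to be the main technical obstacle, though it is routine given the Schwartz decay of $\phi$ and the doubling property of Haar measure.

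For the reverse inequality, assuming $\psi\in\mathcal S(\G)$ with $\int_\G\psi=0$ and $\int_0^\infty\phi_t*\psi_t\,\tfrac{\dd t}{t}=\delta_0$, I would argue by duality: it suffices to show $\big|\int_\G f g\big|\lesssim \|\mathcal C(f)\|\,\|g\|_{H^1}$ for all $g$ in a dense subclass of $H^1(\G)$, where $\mathcal C(f)^2:=\sup_B\tfrac1{|B|}\int_{T(B)}|f*\phi_t|^2\tfrac{\dd t\,\dd x}{t}$, since $BMO=(H^1)^*$ by Folland–Stein. Writing $g=\int_0^\infty \psi_t*\phi_t*g\,\tfrac{\dd t}{t}$ via the reproducing formula and unwinding the convolutions, $\int_\G fg=\int_0^\infty\!\!\int_\G (f*\tilde\phi_t)(x)(g*\psi_t)(x)\,\tfrac{\dd x\,\dd t}{t}$ (with $\tilde\phi(x)=\phi(x^{-1})$, which is again Schwartz and mean-zero). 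Now one applies the Carleson-measure/tent-space pairing inequality of Coifman–Meyer–Stein type, valid in spaces of homogeneous type: $\int_0^\infty\!\!\int_\G |F(t,x)|\,|G(t,x)|\,\tfrac{\dd x\,\dd t}{t}\lesssim \|F\|_{\mathrm{Carleson}}\,\|S_\psi g\|_{L^1}$ where $S_\psi$ is the area integral of \eqref{squareS}; and $\|S_\psi g\|_{L^1}\lesssim \|g\|_{H^1}$ by Folland–Stein. This yields $\|f*\phi_t\|_{\mathrm{Carleson}}\gtrsim[f]_{BMO}$, i.e. the matching lower bound, and combined with the first part gives the claimed equivalence $\approx$.
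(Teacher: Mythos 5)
The paper does not actually prove this proposition: it is recalled from Folland--Stein/Stein (the citation \cite{S}) and used as a black box, so there is no in-paper argument to compare against. Your proof is the standard one and is essentially correct: the splitting $f=(f-m_{2B})\chi_{2B}+(f-m_{2B})\chi_{(2B)^c}+m_{2B}$, the $L^2$-boundedness of $g_\phi$ plus John--Nirenberg for the local piece, the annular decay estimate with $\tfrac{1}{|2^kB|}\int_{2^{k+1}B}|f-m_{2B}|\lesssim k[f]_{BMO}$ for the far piece, and $H^1$--$BMO$ duality combined with the Carleson/tent-space pairing (which is exactly the unnumbered Lemma following Proposition \ref{bmo} in the paper) and the $H^1\to L^1$ bound for $S_\psi$ for the converse.

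One point needs repair, because $\G$ is noncommutative. From the hypothesis $\int_0^\infty\phi_t*\psi_t\,\tfrac{\dd t}{t}=\delta_0$ you get $g=\int_0^\infty (g*\phi_t)*\psi_t\,\tfrac{\dd t}{t}$, and transposing the outer convolution onto $f$ yields
$\int_\G fg=\int_0^\infty\int_\G (f*\tilde\psi_t)(g*\phi_t)\,\tfrac{\dd x\,\dd t}{t}$ with $\tilde\psi(x)=\psi(x^{-1})$ --- i.e.\ $\psi$, not $\phi$, lands on $f$, which controls the wrong Carleson norm. Your displayed identity $\int fg=\int\int(f*\tilde\phi_t)(g*\psi_t)$ would instead require $\int_0^\infty\psi_t*\phi_t\,\tfrac{\dd t}{t}=\delta_0$, which is not the stated hypothesis when the group is noncommutative, and even then it bounds $\int fg$ by the Carleson norm of $f*\tilde\phi_t$ rather than of $f*\phi_t$. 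The fix is one line: apply $u\mapsto\tilde u$ to the reproducing identity, using $\widetilde{a*b}=\tilde b*\tilde a$ and $\tilde\delta_0=\delta_0$, to get $\int_0^\infty\tilde\psi_t*\tilde\phi_t\,\tfrac{\dd t}{t}=\delta_0$; then $g=\int_0^\infty(g*\tilde\psi_t)*\tilde\phi_t\,\tfrac{\dd t}{t}$ and transposition gives $\int_\G fg=\int_0^\infty\int_\G(f*\phi_t)(g*\tilde\psi_t)\,\tfrac{\dd x\,\dd t}{t}$, after which your Carleson--tent pairing and the area-integral bound for $S_{\tilde\psi}$ on $H^1$ (note $\tilde\psi\in\mathcal S(\G)$ with $\int\tilde\psi=0$) close the argument as you intended.
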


From the equivalence stated above, one gets the following proposition.
\begin{proposition}\label{bmo}
Let $f\in \mathcal{S}(\G)$. Then for $\phi_{t}$ defined in (\ref{phit}), we have
$$[f]_{BMO}\approx \sup_{B\in \mathcal{B}}\Big(\frac{1}{|B|}\int_{T(B)}|f*\phi_{t}|^{2}\frac{\dd t}{t}\Big)^{\frac{1}{2}}.$$
\end{proposition}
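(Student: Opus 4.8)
The plan is to deduce Proposition \ref{bmo} from the preceding Carleson-measure characterization of $BMO$ together with the two technical results established for the kernels $\phi_t$ in \eqref{phit}, namely Proposition \ref{propphit} (the function $K = \int_0^\infty \phi_t*\phi_t\,\frac{\dd t}{t}$ exists, is smooth away from the origin, and is homogeneous of degree $-Q$) and Proposition \ref{sg} ($g_\phi$ and $S_\phi$ are bounded on $L^p$). The only obstruction to applying the stated Carleson-measure proposition verbatim is that it requires $\phi \in \mathcal{S}(\G)$ with $\int \phi = 0$, whereas the $\phi$ in \eqref{phit} are not Schwartz — they decay only polynomially. So the task is really to check that the proof of that proposition survives the relaxation of the hypothesis on $\phi$.

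First I would record that each $\phi$ in \eqref{phit} has vanishing integral: this is exactly the computation $\int_\G \nabla_\G p_\alpha(t,x)\,\dd x = 0$ (resp. $\int_\G \partial_t(t^{1-2\alpha}\partial_t p_\alpha)\,\dd x = 0$, etc.) already used in the proof of Theorem \ref{Oct31}, so the cancellation condition needed for a Carleson estimate is in place. Next I would establish the easy (upper) bound: $\sup_{B}\frac{1}{|B|}\int_{T(B)}|f*\phi_t|^2\,\frac{\dd t}{t}\lesssim [f]_{BMO}^2$. The standard argument splits $f = (f-m_{2B})\chi_{2B} + (f-m_{2B})\chi_{(2B)^c} + m_{2B}$; the constant term is killed by $\int\phi = 0$, the local term is handled by the $L^2$-boundedness of $g_\phi$ (Proposition \ref{sg}) applied to $(f-m_{2B})\chi_{2B}$, and the tail term is controlled using the pointwise decay estimates for $\phi_t$ from Proposition \ref{stimap} and Lemma \ref{lem4.2} (which give $|\phi_t(x)| \lesssim t^{1+s-Q}/(1+|x|/t)^{N}$ type bounds, enough to run the usual John–Nirenberg / dyadic-annuli summation). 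This only uses polynomial decay of $\phi$, not Schwartz regularity, so it goes through.

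For the reverse inequality, the stated Carleson-measure proposition needs a reproducing formula $\int_0^\infty \phi_t * \psi_t\,\frac{\dd t}{t} = \delta_0$. I would obtain it from the square-function identity already isolated in the proof of Proposition \ref{sg}: there $\int_0^\infty \phi_t*\phi_t\,\frac{\dd t}{t} = K$ with $K$ a kernel of type $0$ (smooth away from $0$, homogeneous of degree $-Q$), hence $K*(\,\cdot\,)$ is an invertible Calder\'on–Zygmund operator on $L^2$; writing $\psi$ for the convolution kernel of $(K*)^{-1}$, one gets $\int_0^\infty \phi_t*(\phi*\psi)_t\,\frac{\dd t}{t}$ — more carefully, the Calder\'on-type identity $f = \int_0^\infty (f*\phi_t)*\tilde\psi_t\,\frac{\dd t}{t}$ with a suitable companion $\tilde\psi$. (Alternatively, and more cleanly, one invokes the construction in Lemma \ref{lemmaoct22} and Remark 4.5, which produces exactly such a companion function $\psi$ with $\int\psi = 0$ for each of the multipliers in \eqref{phit}.) Granting the reproducing formula, the duality argument of \cite{S} — pairing $f$ against an atom of $H^1$, using the boundedness of $S_\phi$ from $H^1$ to $L^1$ (Proposition \ref{sg}) and a tent-space $L^1$–$L^\infty$ duality between the Carleson functional and the area integral — yields $[f]_{BMO} \lesssim \sup_B(\cdots)^{1/2}$, again without needing $\phi\in\mathcal{S}(\G)$.

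The main obstacle I anticipate is purely bookkeeping rather than conceptual: verifying that every place where the original proof of the Carleson-measure proposition used $\phi\in\mathcal{S}(\G)$ (for instance to make sense of $f*\phi_t$ for $f\in BMO$, which grows, and to justify the tail estimates and the exchange of integrals in the duality step) can be rerun with only the decay $|D^\beta \phi_t(x)|\lesssim t^{1+s+|\beta|-Q}(1+|x|/t)^{-Q-|\beta|}$ coming from Proposition \ref{stimap}/Lemma \ref{lem4.2}. Since $BMO$ functions grow at most logarithmically and the kernels here decay like $|x|^{-Q-1}$ or faster (as $s > -1$, resp. $s>-2\alpha$, $s>-2$ in the respective cases of \eqref{phit}), $f*\phi_t$ is well-defined and the tail contributions are summable; so I expect the relaxation to cause no real trouble, and the proposition follows by assembling the two inequalities. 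I would state this explicitly and refer to \cite{S,FS} for the parts of the argument that are identical to the Schwartz case.
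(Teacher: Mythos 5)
Your proposal is correct and follows essentially the same route as the paper, which in fact gives no detailed argument at all: it simply asserts that Proposition \ref{bmo} "follows from the equivalence stated above," i.e.\ from the Carleson-measure characterization for Schwartz $\phi$, implicitly relying on the cancellation, decay (Proposition \ref{stimap}, Lemma \ref{lem4.2}), square-function bounds (Proposition \ref{sg}), and the companion function from Lemma \ref{lemmaoct22} and Remark 4.5 exactly as you describe. Your write-up supplies the verification the paper omits; the only point worth flagging is that for the gradient entries of \eqref{phit} the spectral-multiplier construction of $\psi$ applies to $(-\Delta_b)^{1/2}$ rather than to $\nabla_{\G}$ itself, so the reproducing formula there requires the Riesz-transform comparison already used in the proof of Theorem \ref{Oct31}.
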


From the previous proposition we get in particular:

\begin{align}
[f]_{BMO}&\approx\sup_{B\in \mathcal{B}}\Big(\frac{1}{|B|}\int_{T(B)}|t\nabla_{\mathbb{G}}u(t,x)|^{2}\frac{\dd t}{t}\Big)^{\frac{1}{2}}\notag\\
&\approx \sup_{B\in \mathcal{B}}\Big(\frac{1}{|B|}\int_{T(B)}|t^{2}\Delta_{\mathbb{G}}u(t,x)|^{2}\frac{\dd t}{t}\Big)^{\frac{1}{2}}\notag\\
&\approx\sup_{B\in \mathcal{B}}\Big(\frac{1}{|B|}\int_{T(B)}|t\frac{\partial}{\partial t}u(t,x)|^{2}\frac{\dd t}{t}\Big)^{\frac{1}{2}},\notag
\end{align}
and in the fractional setting:
\begin{align}
[f]_{BMO}&\approx \sup_{B\in \mathcal{B}}\Big(\frac{1}{|B|}\int_{T(B)}|t^{s}(-\Delta_{b})^{\frac{s}{2}}u(t,x)|^{2}\frac{\dd t}{t}\Big)^{\frac{1}{2}}\notag\\
&\approx \sup_{B\in \mathcal{B}}\Big(\frac{1}{|B|}\int_{T(B)}|t^{1+s}\nabla_{\mathbb{G}}(-\Delta_{b})^{\frac{s}{2}} u(t,x)|^{2}\frac{\dd t}{t}\Big)^{\frac{1}{2}}.
\end{align}

We finish now by recalling the following duality result between the Carleson measure and the square function \cite{S}.

\begin{lemma} Let $G,F:\R^{+}\times \G\to \R$ be two functions. Then
\begin{align*}
&\int_{\G}\int_{0}^{\infty}F(t,x)G(t,x)\frac{\dd t}{t}\dd x\\
&\qquad \lesssim \sup_{B\in\mathcal{B}}\Big(\frac{1}{|B|}\int_{T(B)}|F(t,y)|^{2}\frac{\dd t}{t}\dd y\Big)^{\frac{1}{2}}\int_{\G}\Big(\int_{|y^{-1}x|<t}|G(t,y)|^{2}\frac{\dd t}{t^{Q+1}}\dd y\Big)^{\frac{1}{2}}\dd x,
\end{align*}
whenever the right hand side is finite.
\end{lemma}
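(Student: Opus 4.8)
The plan is to run the classical stopping-time (tent-space) argument of Coifman--Meyer--Stein, using only that $\G$ is a doubling quasi-metric measure space on which the uncentered Hardy--Littlewood maximal operator $M$ is of weak type $(1,1)$. Since replacing $F,G$ by $|F|,|G|$ only increases the left-hand side, I would assume $F,G\ge 0$. Put
\[
\mathcal{N}:=\sup_{B\in\mathcal{B}}\Big(\frac{1}{|B|}\int_{T(B)}F(t,y)^{2}\,\frac{\dd t}{t}\,\dd y\Big)^{1/2},\qquad \mathcal{A}(x):=\Big(\int_{|y^{-1}x|<t}G(t,y)^{2}\,\frac{\dd t}{t^{Q+1}}\,\dd y\Big)^{1/2}.
\]
Assuming the right-hand side $\mathcal{N}\int_{\G}\mathcal{A}$ is finite (otherwise there is nothing to prove) and disposing of the degenerate cases (if $\mathcal{N}=0$ then $F=0$ a.e.\ on $(0,\infty)\times\G$, and if $\int_{\G}\mathcal{A}=0$ then $G=0$ a.e., so in both cases the left-hand side vanishes), by homogeneity in $F$ I would normalize $\mathcal{N}=1$. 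Note that $\mathcal{A}$ is lower semicontinuous (Fatou, since the cone $\{(t,y):|y^{-1}x|<t\}$ is open in $(0,\infty)\times\G$ and $x\mapsto|y^{-1}x|$ is continuous), so the level sets below are open.

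For $k\in\Z$ set $O_{k}:=\{x\in\G:\mathcal{A}(x)>2^{k}\}$ and $O_{k}^{*}:=\{x\in\G:M\mathbf{1}_{O_{k}}(x)>1/2\}$; then $O_{k}\subseteq O_{k}^{*}$, the set $O_{k}^{*}$ is open, and $|O_{k}^{*}|\lesssim|O_{k}|$ by the weak $(1,1)$ inequality. For an open set $\Omega\subseteq\G$ define the tent $\widehat{\Omega}:=\{(t,x)\in(0,\infty)\times\G:B(x,t)\subseteq\Omega\}$. I would record two facts. First, a Whitney decomposition $\Omega=\bigcup_{j}Q_{j}$ into balls, together with the inclusion $\widehat{\Omega}\subseteq\bigcup_{j}T(cQ_{j})$ for a fixed constant $c=c(\G)$, the definition of $\mathcal{N}$, and the doubling property, give
\[
\int_{\widehat{\Omega}}F(t,x)^{2}\,\frac{\dd t}{t}\,\dd x\ \lesssim\ \mathcal{N}^{2}\,|\Omega|\ =\ |\Omega|.
\]
Second, if $(t,x)\notin\widehat{\Omega^{*}}$ with $\Omega^{*}=\{M\mathbf{1}_{\Omega}>1/2\}$, then $B(x,t)\not\subseteq\Omega^{*}$, so $B(x,t)$ contains a point at which $M\mathbf{1}_{\Omega}\le\tfrac{1}{2}$; since $B(x,t)$ is an admissible ball for that point, $\tfrac{|B(x,t)\cap\Omega|}{|B(x,t)|}\le\tfrac{1}{2}$, hence $|B(x,t)\setminus\Omega|\ge\tfrac{1}{2}|B(x,t)|\approx t^{Q}$.

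Decompose $(0,\infty)\times\G=\bigcup_{k\in\Z}\Delta_{k}$ with $\Delta_{k}:=\widehat{O_{k}^{*}}\setminus\widehat{O_{k+1}^{*}}$, up to a leftover set on which $G=0$ a.e.; the identification of this leftover set (via a Lindel\"of covering argument, after an initial truncation of $F,G$ if one wishes) is exactly as in \cite{FS,S}. On $\Delta_{k}$ apply Cauchy--Schwarz in the measure $\tfrac{\dd t}{t}\,\dd x$. Since $\Delta_{k}\subseteq\widehat{O_{k}^{*}}$, the first fact gives $\int_{\Delta_{k}}F^{2}\,\tfrac{\dd t}{t}\,\dd x\lesssim|O_{k}^{*}|\lesssim|O_{k}|$. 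For the $G$ factor: if $(t,x)\in\Delta_{k}$ then $B(x,t)\subseteq O_{k}^{*}$ while $|B(x,t)\setminus O_{k+1}|\gtrsim t^{Q}$ (second fact with $\Omega=O_{k+1}$), so $|B(x,t)\cap(O_{k}^{*}\setminus O_{k+1})|\gtrsim t^{Q}$, and Tonelli together with $\mathbf{1}_{B(x,t)}(z)=\mathbf{1}_{\{|x^{-1}z|<t\}}$ and $|x^{-1}z|=|z^{-1}x|$ yields
\begin{align*}
\int_{\Delta_{k}}G^{2}\,\frac{\dd t}{t}\,\dd x
&\lesssim \int_{(0,\infty)\times\G}G(t,x)^{2}\,\frac{|B(x,t)\cap(O_{k}^{*}\setminus O_{k+1})|}{t^{Q+1}}\,\dd t\,\dd x\\
&= \int_{O_{k}^{*}\setminus O_{k+1}}\mathcal{A}(z)^{2}\,\dd z\ \le\ 2^{2(k+1)}|O_{k}^{*}|\ \lesssim\ 2^{2k}|O_{k}|,
\end{align*}
where the penultimate step uses $\mathcal{A}\le 2^{k+1}$ off $O_{k+1}$. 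Hence $\int_{\Delta_{k}}FG\,\tfrac{\dd t}{t}\,\dd x\lesssim |O_{k}|^{1/2}(2^{2k}|O_{k}|)^{1/2}=2^{k}|O_{k}|$, and summing over $k$ with the layer-cake comparison $\sum_{k}2^{k}|\{\mathcal{A}>2^{k}\}|\approx\int_{\G}\mathcal{A}$ gives $\int_{(0,\infty)\times\G}FG\,\tfrac{\dd t}{t}\,\dd x\lesssim\int_{\G}\mathcal{A}$, which is the claim once the normalization $\mathcal{N}=1$ is undone.

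The step I expect to be the main obstacle is the geometric bookkeeping of the middle paragraph: verifying the tent Carleson estimate through a Whitney decomposition adapted to the Carnot quasi-metric (so that the tent over a Whitney ball lies inside $T$ of a fixed dilate of that ball), and carefully pinning down the leftover set in the decomposition into the $\Delta_{k}$. The bilinear bound on each $\Delta_{k}$ is then merely Cauchy--Schwarz, and the rest is the layer-cake identity; since nothing beyond doubling and the weak $(1,1)$ maximal inequality enters, the argument is the Euclidean one of \cite{FS,S} transcribed to $\G$.
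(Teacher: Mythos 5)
The paper does not prove this lemma at all: it is quoted from Stein/Folland--Stein as the classical duality between Carleson measures and the area integral, so there is no in-paper argument to compare against. Your proof is the standard Coifman--Meyer--Stein stopping-time argument transcribed to a space of homogeneous type, and it is correct: the only inputs are doubling, the weak $(1,1)$ bound for the uncentered maximal function, the Whitney decomposition (both available on $\G$ by Coifman--Weiss), and the symmetry $|x^{-1}z|=|z^{-1}x|$ of the homogeneous norm, which is exactly what makes the Tonelli step $\int_{\Delta_k}G^2\,\frac{\dd t}{t}\dd x\lesssim\int_{O_k^*\setminus O_{k+1}}\mathcal{A}^2$ compatible with the paper's convention $B(x,r)=\{y:|y^{-1}x|<r\}$. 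Two points you leave compressed but which do close: (i) the leftover set is $\{(t,x):B(x,t)\not\subseteq O_k^*\ \mbox{for all}\ k\}$ (the other piece, $\bigcap_k\widehat{O_k^*}$, is empty because $|O_k^*|\lesssim 2^{-k}\int_{\G}\mathcal{A}\to 0$ while $|B(x,t)|>0$), and on it your Fact 2 gives $|B(x,t)\cap\{\mathcal{A}=0\}|\gtrsim t^{Q}>0$; since $0=\int_{\{\mathcal{A}=0\}}\mathcal{A}(z)^2\dd z=\int G(t,y)^2\,|B(y,t)\cap\{\mathcal{A}=0\}|\,t^{-Q-1}\dd t\dd y$ by Tonelli, $G$ vanishes a.e.\ wherever $|B(y,t)\cap\{\mathcal{A}=0\}|>0$, so the leftover contributes nothing and no Lindel\"of covering is actually needed; (ii) for the tent inclusion $\widehat{\Omega}\subseteq\bigcup_jT(cQ_j)$ one should record that $(t,x)\in\widehat{\Omega}$ forces $t\lesssim\mathrm{dist}(x,\Omega^{c})\approx r_j$ for the Whitney ball $Q_j=B(x_j,r_j)$ containing $x$, after which the paper's definition $T(B_r(x_0))=\{(t,x):|x_0^{-1}x|<r-t\}$ is satisfied for a fixed dilate $cQ_j$ (constants depending on the quasi-triangle constant of the homogeneous norm). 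With those two remarks your argument is a complete, self-contained proof of the statement the paper only cites.
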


\begin{corollary}\label{cor4}
Let $G:\R^{+}\times \G\to \R$ such that
\[\int_{\G}\Big(\int_{|y^{-1}x|<t}|G(t,y)|^{2}\frac{\dd t}{t^{Q+1}}\dd y\Big)^{\frac{1}{2}}\dd x<\infty.\]
Then
$$\int_{\G}\int_{0}^{\infty}(t\frac{\partial}{\partial t}u(t,x))G(t,x)\frac{\dd t}{t}\dd x\lesssim [f]_{BMO}\int_{\G}\Big(\int_{|y^{-1}x|<t}|G(t,y)|^{2}\frac{\dd t}{t^{Q+1}}\dd y\Big)^{\frac{1}{2}}\dd x.$$

This inequality still holds if we replace $t\frac{\partial}{\partial t}u(t,x)$ by any one of:
\begin{itemize}
\item $t\nabla_{\G}u(t,x)$,
\item $t^{2}\Delta_{b} u(t,x)$, 
\item $t^{s}(-\Delta_{b})^{\frac{s}{2}}u(t,x)$,
\item $t^{1+s}\tilde{\nabla}(-\Delta_{b})^{\frac{s}{2}} u(t,x)$.
\end{itemize}
\end{corollary}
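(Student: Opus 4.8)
The plan is to derive Corollary \ref{cor4} directly from the preceding duality Lemma and the BMO characterizations collected in Proposition \ref{bmo} together with the displays following it. Recall that the Lemma states that for two functions $F, G\colon \R^{+}\times\G\to\R$ one has
\[
\int_{\G}\int_{0}^{\infty} F(t,x)G(t,x)\frac{\dd t}{t}\dd x \lesssim \sup_{B\in\mathcal{B}}\Big(\frac{1}{|B|}\int_{T(B)}|F(t,y)|^{2}\frac{\dd t}{t}\dd y\Big)^{\frac{1}{2}}\int_{\G}\Big(\int_{|y^{-1}x|<t}|G(t,y)|^{2}\frac{\dd t}{t^{Q+1}}\dd y\Big)^{\frac{1}{2}}\dd x,
\]
whenever the right side is finite. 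The idea is to apply this with $F(t,x)=t\frac{\partial}{\partial t}u(t,x)$ and the given $G$.

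First I would check the hypothesis of the Lemma is met: the $G$-factor on the right is finite by assumption, and the $F$-factor is precisely $\sup_{B}\big(\tfrac{1}{|B|}\int_{T(B)}|t\partial_t u(t,x)|^2\tfrac{\dd t}{t}\dd x\big)^{1/2}$, which by the third line of the display following Proposition \ref{bmo} is comparable to $[f]_{BMO}$. Hence the Lemma applies and yields
\[
\int_{\G}\int_{0}^{\infty}\Big(t\frac{\partial}{\partial t}u(t,x)\Big)G(t,x)\frac{\dd t}{t}\dd x \lesssim [f]_{BMO}\int_{\G}\Big(\int_{|y^{-1}x|<t}|G(t,y)|^{2}\frac{\dd t}{t^{Q+1}}\dd y\Big)^{\frac{1}{2}}\dd x,
\]
which is the claimed inequality. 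For the remaining four substitutions, the only thing that changes is which Carleson-measure expression plays the role of the $F$-factor: for $t\nabla_{\G}u$ one uses the first line of the post-Proposition \ref{bmo} display; for $t^{2}\Delta_{b}u$ the second line; for $t^{s}(-\Delta_{b})^{s/2}u$ the first line of the fractional display; and for $t^{1+s}\nabla_{\G}(-\Delta_{b})^{s/2}u$ (here $\tilde\nabla$ denoting the horizontal gradient) the second line of the fractional display. In every case these displays assert that the relevant Carleson quantity is $\approx [f]_{BMO}$, so the Lemma gives the same conclusion verbatim.

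I do not anticipate a genuine obstacle: the corollary is a bookkeeping combination of the duality Lemma with the equivalences already established in Proposition \ref{bmo} and its consequences. The only mild point of care is making sure the finiteness hypothesis of the Lemma is actually satisfied, i.e. that the $[f]_{BMO}$ side is finite — but since $f\in\mathcal{S}(\G)$ this is automatic, and the $G$-side finiteness is assumed outright. One should also note that each $\phi_t$ appearing in \eqref{phit} has vanishing integral (this is used inside Proposition \ref{bmo}), which has already been verified in the discussion around Proposition \ref{propphit}, so no new verification is needed here.
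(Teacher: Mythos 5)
Your argument is correct and is exactly the intended one: the paper states Corollary \ref{cor4} without a separate proof precisely because it follows by applying the preceding duality Lemma with $F(t,x)$ equal to the relevant quantity and bounding the Carleson factor by $[f]_{BMO}$ via the equivalences recorded after Proposition \ref{bmo}. Your case-by-case matching of each substitution to the corresponding display line, and your check that both finiteness hypotheses hold, is the right bookkeeping.
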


\section{Applications}\label{applications}

Before starting this section we recall some relevant maximal functions bounds. Given a function $\phi\colon \G \to \R$ satisfying the growth condition
$$|\phi(x)|\lesssim \frac{1}{(1+|x|)^{\lambda}},$$
for a given $\lambda>0$, one can define the two maximal functions
$$(\mathcal{M}_{\phi}^{0}f)(x)=\sup_{t>0} (f*\phi_{t})(x)$$
and
$$(\mathcal{M}_{\phi}f)(x)=\sup\{|f*\phi_{t}|: |x^{-1}y|<t, 0<t<\infty\}.$$
With these definitions, one has the following theorem \cite{FS}.

\begin{theorem}\label{thmb}
For $\lambda>Q$, $\mathcal{M}_{\phi}^{0}$ and $\mathcal{M}_{\phi}$ are bounded from $L^{p}(\G)$ to $L^{p}(\G)$ for $p>1$ and from $L^{1}(\G)$ to weak $L^{1}(\G)$.
\end{theorem}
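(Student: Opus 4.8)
\textbf{Proof proposal for Theorem \ref{thmb}.}
The plan is to dominate both maximal operators pointwise by the (uncentered) Hardy--Littlewood maximal operator $Mf(x)=\sup_{B\ni x}\frac{1}{|B|}\int_{B}|f|$ on $\G$, and then invoke the classical mapping properties of $M$. Recall that $(\G,|\cdot|,\text{Haar})$ is a space of homogeneous type: balls are doubling because $|B(x,r)|\approx r^{Q}$ by homogeneity of the dilations, so the Vitali covering argument gives that $M$ is bounded on $L^{p}(\G)$ for $p>1$ and of weak type $(1,1)$. Thus it suffices to establish the pointwise bounds $\mathcal{M}_{\phi}^{0}f(x)\lesssim Mf(x)$ and $\mathcal{M}_{\phi}f(x)\lesssim Mf(x)$ for $f\in L^{p}(\G)$, $p\geq 1$.

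For the first bound, using $\phi_{t}(x)=t^{-Q}\phi(\delta_{1/t}(x))$ and $|\phi(z)|\lesssim (1+|z|)^{-\lambda}$ together with the homogeneity $|\delta_{1/t}(y)|=|y|/t$, I would write
\[
|(f*\phi_{t})(x)|\leq \int_{\G}|f(xy^{-1})|\,t^{-Q}\bigl(1+|y|/t\bigr)^{-\lambda}\dd y .
\]
Then I decompose the $y$-integral over the dyadic annuli $A_{0}=\{|y|<t\}$ and $A_{j}=\{2^{j-1}t\leq |y|<2^{j}t\}$ for $j\geq 1$. On $A_{j}$ one has $(1+|y|/t)^{-\lambda}\lesssim 2^{-j\lambda}$, while the annulus is contained in the ball $B(x,2^{j}t)$ of measure $\lesssim (2^{j}t)^{Q}$ (this uses the quasi-triangle inequality of $|\cdot|$, only up to constants). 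Hence the $j$-th term is
\[
\lesssim 2^{-j\lambda}\,t^{-Q}\!\!\int_{B(x,2^{j}t)}\!\!|f|\;\lesssim\;2^{-j\lambda}\,2^{jQ}\cdot\frac{1}{|B(x,2^{j}t)|}\!\!\int_{B(x,2^{j}t)}\!\!|f|\;\lesssim\;2^{-j(\lambda-Q)}Mf(x),
\]
and summing the geometric series in $j$ (which converges precisely because $\lambda>Q$) gives $|(f*\phi_{t})(x)|\lesssim Mf(x)$ uniformly in $t$, hence $\mathcal{M}_{\phi}^{0}f(x)\lesssim Mf(x)$.

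For the non-tangential version, fix $x$ and a pair $(t,z)$ with $|x^{-1}z|<t$. The only change is that the relevant balls are now centered at $z$ rather than $x$; but since $|x^{-1}z|<t$, the quasi-triangle inequality gives $B(z,2^{j}t)\subset B(x,C2^{j}t)$ for a fixed constant $C$, so the annular estimate above produces the same bound $|(f*\phi_{t})(z)|\lesssim Mf(x)$, now uniform over all admissible $(t,z)$; taking the supremum yields $\mathcal{M}_{\phi}f(x)\lesssim Mf(x)$. Combining the two pointwise estimates with the boundedness of $M$ finishes the proof. The only genuinely delicate point is bookkeeping the geometric constants coming from the quasi-triangle inequality of the homogeneous norm (and the equivalence $|B(x,r)|\approx r^{Q}$), but these are absorbed into the implicit constants; the summability over dyadic scales is where the hypothesis $\lambda>Q$ is used and cannot be relaxed.
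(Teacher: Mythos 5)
Your proof is correct: the pointwise domination $\mathcal{M}_{\phi}^{0}f\lesssim Mf$ and $\mathcal{M}_{\phi}f\lesssim Mf$ via dyadic annuli, followed by the mapping properties of the Hardy--Littlewood maximal operator on the space of homogeneous type $(\G,|\cdot|,dx)$, is exactly the standard argument; the paper gives no proof of Theorem \ref{thmb} and simply quotes it from \cite{FS}, where essentially this argument appears, so your write-up supplies the proof the paper omits. One cosmetic remark: for the centered bound the containment of the $j$-th annulus in $B(x,2^{j}t)$ follows from the change of variables $z=xy^{-1}$ alone (since $|z^{-1}x|=|y|$), with no quasi-triangle inequality needed; the quasi-triangle constant genuinely enters only in the non-tangential case, exactly as you indicate.
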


In this section if $f\in \mathcal{S}(\G)$ we will write $F_{\alpha}=f*p_{\alpha}$. We also use the notation $\tilde{\nabla}=\nabla_{\G}\oplus \frac{\partial}{\partial t}$, defined by $\tilde{\nabla}u=(\nabla_{\G}u,\frac{\partial u}{\partial t})$. In what follows, we will write $\mathcal{M}$ and $\mathcal{M}^{0}$ instead of $\mathcal{M}_{\phi}$ and $\mathcal{M}_{\phi}^{0}$, since the function $\phi$ will be different depending on the situation.

\subsection{Integral Inequalities}

\begin{theorem}\label{thmest1}
Let $f, g, h\in \mathcal{S}(\G)$ and $1<p_{1},p_{2},p_{3}<\infty$ such that $\frac{1}{p_{1}}+\frac{1}{p_{2}}+\frac{1}{p_{3}}=1$. Then 
\begin{enumerate}
\item For $s_{1},s_{2}\in (0,\min\{1,2\alpha\})$ and $Q>s_{3}\geq 0$ 
$$\int_{\R^{+}\times \G}t^{2-s_{1}-s_{2}+s_{3}}|\tilde{\nabla}F_{\alpha}||\tilde{\nabla}G_{\alpha}||H_{\alpha}|\frac{\dd x\dd t}{t}\lesssim \|(-\Delta_{b})^{\frac{s_{1}}{2}}f\|_{L^{p_{1}}}\|(-\Delta_{b})^{\frac{s_{2}}{2}}g\|_{L^{p_{2}}}\|I_{s_{3}}h\|_{L^{p_{3}}}.$$
\item For $s_{1}\in (0,\min\{1,2\alpha\})$ and $Q>s_{3},s_{2}\geq 0$ 
$$\int_{\R^{+}\times \G}t^{2-s_{1}+s_{2}+s_{3}}|\tilde{\nabla}F_{\alpha}||\tilde{\nabla}G_{\alpha}||H_{\alpha}|\frac{\dd x\dd t}{t}\lesssim \|(-\Delta_{b})^{\frac{s_{1}}{2}}f\|_{L^{p_{1}}}\|I_{s_{2}}g\|_{L^{p_{2}}}\|I_{s_{3}}h\|_{L^{p_{3}}}.$$
\item For $s_{1}\in (0,\min\{1+2\alpha,2\})$, $s_{2}\in (0,\min\{1,2\alpha\})$ and $0\leq s_{3}<Q$,  
$$\int_{\R^{+}\times \G}t^{3-s_{1}+s_{2}+s_{3}}|\nabla_{\G}\tilde{\nabla}F_{\alpha}||\tilde{\nabla}G_{\alpha}||H_{\alpha}|\frac{\dd x\dd t}{t}\lesssim \|(-\Delta_{b})^{\frac{s_{1}}{2}}f\|_{L^{p_{1}}}\|(-\Delta_{b})^{\frac{s_{2}}{2}}g\|_{L^{p_{2}}}\|I_{s_{3}}h\|_{L^{p_{3}}}.$$
\end{enumerate}
Here $I_{\alpha}$ is the fractional integration of order $\alpha$, that is $I_{\alpha}u=(-\Delta_{b})^{\frac{\alpha}{2}}u$.
\end{theorem}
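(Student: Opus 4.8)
The plan is to prove all three inequalities by one uniform scheme, the sub-Riemannian analogue of the square-function/tent-space argument of \cite{lenz}. In each case I would factor the weight into three pieces, one attached to each of the three functions: on $F_{\alpha}$ the weight $t^{1-s_{1}}$ (in the first two inequalities) or $t^{2-s_{1}}$ (in the third), on $G_{\alpha}$ the weight $t^{1-s_{2}}$ (first inequality) or $t^{1+s_{2}}$ (second and third), and on $H_{\alpha}$ the weight $t^{s_{3}}$; the exponents add up exactly to the ones stated. Freezing $x$ and applying the Cauchy--Schwarz inequality in $t$ against $\frac{\dd t}{t}$, the inner integral is bounded by the product of the two square functions
\[
\Big(\int_{0}^{\infty}\big(t^{1-s_{1}}|\tilde{\nabla}F_{\alpha}|\big)^{2}\tfrac{\dd t}{t}\Big)^{1/2},\qquad \Big(\int_{0}^{\infty}\big(t^{1\mp s_{2}}|\tilde{\nabla}G_{\alpha}|\big)^{2}\tfrac{\dd t}{t}\Big)^{1/2}
\]
(in the third inequality the first one carries $\nabla_{\G}\tilde{\nabla}F_{\alpha}$ with weight $t^{2-s_{1}}$) times $\sup_{t>0}t^{s_{3}}|H_{\alpha}(t,x)|$. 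Integrating in $x$ and using H\"older's inequality with exponents $p_{1},p_{2},p_{3}$ then reduces everything to one $L^{p_{i}}$ bound per factor.

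The two square-function factors are supplied by Theorem~\ref{Oct23} and Proposition~\ref{sg}: the former gives the $L^{p}$ bound by $\|(-\Delta_{b})^{s/2}u\|_{L^{p}}$ for the $\nabla_{\G}$-square function when $s<1$, for the $\partial_{t}$-one when $s<2\alpha$, and for the $\nabla_{\G}^{2}$-one when $s<2$, while the latter, applied to the functions $\phi_{t}$ listed in \eqref{phit}, also covers the mixed second-order square function $\nabla_{\G}\partial_{t}u$ for $s<1+2\alpha$. Taking $s=s_{1}$ on the $F_{\alpha}$-factor produces $\|(-\Delta_{b})^{s_{1}/2}f\|_{L^{p_{1}}}$ and, since $\tilde{\nabla}=\nabla_{\G}\oplus\frac{\partial}{\partial t}$ forces us to intersect the $\nabla_{\G}$- and $\partial_{t}$-ranges, the restriction $s_{1}\in(0,\min\{1,2\alpha\})$ (resp. $s_{1}\in(0,\min\{1+2\alpha,2\})$ in the third inequality, where the second-order square function is used). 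Taking $s=s_{2}$ on the $G_{\alpha}$-factor (weight $t^{1-s_{2}}$) gives $\|(-\Delta_{b})^{s_{2}/2}g\|_{L^{p_{2}}}$ and $s_{2}\in(0,\min\{1,2\alpha\})$, whereas taking $s=-s_{2}$ (weight $t^{1+s_{2}}$) gives $\|I_{s_{2}}g\|_{L^{p_{2}}}$ and $s_{2}\in[0,Q)$; here one rewrites $t^{1+s_{2}}\tilde{\nabla}G_{\alpha}(t,\cdot)=(I_{s_{2}}g)*\Xi_{t}$ with $\Xi_{t}$ exactly one of the functions from \eqref{phit}, the fractional power passing across the convolution because all kernels involved are functions of $-\Delta_{b}$.

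For the remaining factor I would write $H_{\alpha}=h*p_{\alpha}=(I_{s_{3}}h)*\big((-\Delta_{b})^{s_{3}/2}p_{\alpha}\big)$, and use the homogeneity relation $(-\Delta_{b})^{s_{3}/2}p_{\alpha}(t,\cdot)=t^{-s_{3}}\Psi_{t}$ with $\Psi=(-\Delta_{b})^{s_{3}/2}p_{\alpha}(1,\cdot)$ and $\Psi_{t}(x)=t^{-Q}\Psi(\delta_{1/t}(x))$, so that $t^{s_{3}}H_{\alpha}(t,\cdot)=(I_{s_{3}}h)*\Psi_{t}$ and hence $\sup_{t>0}t^{s_{3}}|H_{\alpha}(t,x)|\le\mathcal{M}_{\Psi}(I_{s_{3}}h)(x)$. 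One then checks that $|\Psi(x)|\lesssim(1+|x|)^{-\lambda}$ for some $\lambda>Q$: the convolution tail against the kernel $\tilde{R}_{-s_{3}}\approx|\cdot|^{-Q-s_{3}}$ from \eqref{sx} decays like $|x|^{-Q-s_{3}}$, and the remaining contributions, controlled by the pointwise estimates of Lemma~\ref{lem4.2} and the equation satisfied by $p_{\alpha}$, decay like $|x|^{-Q-2\alpha}$, so $\lambda=Q+\min\{s_{3},2\alpha\}>Q$ (and when $s_{3}=0$ simply $\Psi=p_{\alpha}(1,\cdot)$). Theorem~\ref{thmb} then yields $\|\mathcal{M}_{\Psi}(I_{s_{3}}h)\|_{L^{p_{3}}}\lesssim\|I_{s_{3}}h\|_{L^{p_{3}}}$, and H\"older combines the three $L^{p_{i}}$ estimates.

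The step I expect to be the main obstacle is precisely this verification: that the fractional powers of $-\Delta_{b}$ applied to the Poisson kernel and to its derivatives --- $(-\Delta_{b})^{s_{3}/2}p_{\alpha}(1,\cdot)$ for the $h$-factor, and $\nabla_{\G}(-\Delta_{b})^{s_{2}/2}p_{\alpha}(1,\cdot)$, $(-\Delta_{b})^{s_{2}/2}\partial_{t}p_{\alpha}(1,\cdot)$, etc. for the $g$-factor of the second inequality --- still decay strictly faster than $|x|^{-Q}$ and satisfy the size and regularity hypotheses required by Theorem~\ref{thmb} and Proposition~\ref{sg}; this rests on Proposition~\ref{stimap}, Lemma~\ref{lem4.2}, the Poisson equation for $p_{\alpha}$, and \eqref{sx}. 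Everything else is bookkeeping: the splitting of the $t$-exponents, checking that the claimed ranges of $s_{1},s_{2},s_{3}$ are exactly the intersection of the constraints coming from the invoked square-function and maximal-function estimates, and noting that in the third inequality it is the use of the second-order square function for $F_{\alpha}$ (weight $t^{2-s_{1}}$) that accounts for the extra power of $t$.
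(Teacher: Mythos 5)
Your proposal is correct and follows essentially the same route as the paper: split the weight as $t^{1-s_1}\cdot t^{1-s_2}\cdot t^{s_3}$ (with the obvious modifications in cases (2) and (3)), apply Cauchy--Schwarz in $t$ to produce two square functions controlled by Theorem \ref{Oct23} and Proposition \ref{sg}, bound $\sup_{t>0}t^{s_3}|H_{\alpha}|$ by a maximal function of $I_{s_3}h$ via Theorem \ref{thmb}, and conclude with H\"older. The only difference is that you spell out the decay verification $|(-\Delta_b)^{s_3/2}p_{\alpha}(1,\cdot)|\lesssim (1+|x|)^{-Q-\min\{s_3,2\alpha\}}$, which the paper simply asserts.
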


\begin{proof}
We will present the proof of (1). The proofs of (2) and (3) follow the same idea. We will apply the result of Theorem \ref{thmb} for $\phi_{t}=t^{s}(-\Delta_{b})^{\frac{s}{2}}p_{\alpha}$. Indeed, we have that
$$(-\Delta_{b})^{\frac{s}{2}}p_{\alpha}(1,x)\lesssim \frac{1}{(1+|x|)^{Q+s}}.$$
Hence, we have by definition of $\mathcal{M}_{\phi}^{0}$, that 
$$\sup_{t>0}t^{s}|H_{\alpha}|=\sup_{t>0} |t^{s}(-\Delta_{b})^{\frac{s}{2}}p_{\alpha}*I_{s}f|=\mathcal{M}_{\phi}^{0}(I_{s}f).$$
Hence,
$$
\int_{\G}\int_{0}^{\infty}t^{2-s_{1}-s_{2}+s_{3}}|\tilde{\nabla}F_{\alpha}||\tilde{\nabla}G_{\alpha}||H_{\alpha}|\frac{\dd x\dd t}{t}\lesssim\int_{\G}\mathcal{M}_{\phi}^{0}(I_{s_{3}}h)\int_{0}^{\infty} t^{1-s_{1}}|\tilde{\nabla}F_{\alpha}|t^{1-s_{2}}|\tilde{\nabla}G_{\alpha}| \frac{\dd t}{t}\dd x.$$
The result then follows from H\"{o}lder's inequality and Theorem \ref{Oct23}.
\end{proof}

\begin{theorem}\label{thmest2}
Let $f, g, h \in \mathcal{S}(\G)$, and $\frac{1}{p}+\frac{1}{q}=1$ with $1<p<\infty$. Then for $s\in(0,1)$
$$\int_{\R^{+}\times \G}t^{2+2(1-s)}|\tilde{\nabla}H_{\alpha}||\nabla_{\G}\tilde{\nabla}F_{\alpha}||\tilde{\nabla}G_{\alpha}|\frac{\dd x\dd t}{t}\lesssim [h]_{BMO}\|(-\Delta_{b})^{\frac{s}{2}}f\|_{L^{p}}\|(-\Delta_{b})^{\frac{s}{2}}g\|_{L^{q}},$$
and for $s<2\alpha$ we have
$$\int_{\R^{+}\times \G}t^{2-s}|\tilde{\nabla}H_{\alpha}||\frac{\partial}{\partial t}F_{\alpha}||G_{\alpha}|\frac{\dd x\dd t}{t}\lesssim [h]_{BMO}\|(-\Delta_{b})^{\frac{s}{2}}f\|_{L^{p}}\|g\|_{L^{q}}.$$
\end{theorem}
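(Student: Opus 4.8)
The approach I would take is to view Theorem~\ref{thmest2} as the borderline, $BMO$-type counterpart of Theorem~\ref{thmest1}: the factor built from $h$, which in Theorem~\ref{thmest1} was absorbed by a maximal function acting on an $L^{p_3}$ datum, will here be absorbed into a Carleson measure controlled by $[h]_{BMO}$ through the duality Lemma that precedes Corollary~\ref{cor4}; the other two factors are then dealt with as before, one via a cone square function and one via a cone maximal function, and the three pieces are recombined by H\"older's inequality with $\tfrac{1}{p}+\tfrac{1}{q}=1$. Throughout, the basic algebraic move is to transfer a (negative) power $(-\Delta_b)^{-s/2}$ off the datum and onto $p_\alpha$, compensating the homogeneity by the appropriate power of $t$ so that the resulting kernel is of the homogeneous type appearing in \eqref{phit}.

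For the first inequality I would rewrite the weight as $t^{2+2(1-s)}=t\cdot t^{2-s}\cdot t^{1-s}$ and regroup the integrand as $\big(t|\tilde{\nabla}H_\alpha|\big)\big(t^{2-s}|\nabla_\G\tilde{\nabla}F_\alpha|\big)\big(t^{1-s}|\tilde{\nabla}G_\alpha|\big)$. Applying the duality Lemma before Corollary~\ref{cor4} with $F=|t\tilde{\nabla}H_\alpha|$ and $G$ the product of the other two factors, and noting that $t\nabla_\G p_\alpha$ and $t\partial_t p_\alpha$ occur among the families $\phi_t$ of \eqref{phit}, Proposition~\ref{bmo} applied to each of these gives $\sup_{B\in\mathcal{B}}\big(\tfrac{1}{|B|}\int_{T(B)}|t\tilde{\nabla}H_\alpha|^2\tfrac{\dd t\,\dd x}{t}\big)^{1/2}\lesssim[h]_{BMO}$. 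What remains is $[h]_{BMO}$ times $\int_\G\big(\int_{|y^{-1}x|<t}t^{2(2-s)}|\nabla_\G\tilde{\nabla}F_\alpha(t,y)|^2\,t^{2(1-s)}|\tilde{\nabla}G_\alpha(t,y)|^2\,\tfrac{\dd t}{t^{Q+1}}\dd y\big)^{1/2}\dd x$. Inside the cone $|y^{-1}x|<t$ I would bound $t^{1-s}|\tilde{\nabla}G_\alpha(t,y)|=|(\psi_t*(-\Delta_b)^{s/2}g)(y)|\le\mathcal{M}_\psi\big((-\Delta_b)^{s/2}g\big)(x)$ with $\psi=\tilde{\nabla}(-\Delta_b)^{-s/2}p_\alpha(1,\cdot)$, which by Proposition~\ref{stimap} and Lemma~\ref{lem4.2} decays like $(1+|x|)^{-(Q+1-s)}$, hence is admissible for Theorem~\ref{thmb} precisely because $s<1$, so $\mathcal{M}_\psi$ is bounded on $L^q$. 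The surviving factor is $S_\phi\big((-\Delta_b)^{s/2}f\big)(x)$ with $\phi=\nabla_\G\tilde{\nabla}(-\Delta_b)^{-s/2}p_\alpha(1,\cdot)$, an $L^p$-bounded cone square function by Theorem~\ref{Oct23} together with Proposition~\ref{sg} (its components involve $\nabla_\G^2$ and $\nabla_\G\partial_t$ applied to $p_\alpha$, covered respectively by the third estimate of Theorem~\ref{Oct23} and by \eqref{phit}). H\"older's inequality then yields $\lesssim[h]_{BMO}\|(-\Delta_b)^{s/2}f\|_{L^p}\|(-\Delta_b)^{s/2}g\|_{L^q}$.

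The second inequality is entirely analogous: rewrite $t^{2-s}=t\cdot t^{1-s}\cdot 1$, regroup as $\big(t|\tilde{\nabla}H_\alpha|\big)\big(t^{1-s}|\partial_t F_\alpha|\big)\big(|G_\alpha|\big)$, control $|t\tilde{\nabla}H_\alpha|$ by $[h]_{BMO}$ as above, bound $|G_\alpha(t,y)|\le\mathcal{M}_{p_\alpha}(g)(x)$ in the cone (here $p_\alpha(1,\cdot)\lesssim(1+|x|)^{-(Q+2\alpha)}$, so $\mathcal{M}_{p_\alpha}$ is unconditionally $L^q$-bounded by Theorem~\ref{thmb}), recognize $t^{1-s}|\partial_t F_\alpha|$ as producing $S_\psi\big((-\Delta_b)^{s/2}f\big)$ with $\psi=(-\Delta_b)^{-s/2}\partial_t p_\alpha(1,\cdot)$ — an instance of \eqref{phit} admissible exactly when $s<2\alpha$ — and finish with H\"older, Proposition~\ref{sg} and Theorem~\ref{thmb}.

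The main obstacle, and the only place where the hypotheses $s\in(0,1)$ (respectively $s<2\alpha$) are genuinely used, is verifying that the kernels obtained by moving a \emph{negative} power of $-\Delta_b$ onto $p_\alpha$ and then differentiating — namely $\tilde{\nabla}(-\Delta_b)^{-s/2}p_\alpha(1,\cdot)$ and $(-\Delta_b)^{-s/2}\partial_t p_\alpha(1,\cdot)$ — still decay faster than $|x|^{-Q}$ at infinity, so that the maximal operator of Theorem~\ref{thmb} applies; this is exactly the balance between the decay-worsening fractional integration and the decay-improving derivative, and it is where $1-s>0$ (respectively $2\alpha-s>0$) enters. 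A secondary technical point is that the Carleson estimate is needed for the non-Schwartz symbols $t\nabla_\G p_\alpha$ and $t\partial_t p_\alpha$, so one must invoke the extension of the $BMO$ characterization recorded in Proposition~\ref{bmo} and Corollary~\ref{cor4} rather than the classical statement for $\phi\in\mathcal{S}(\G)$.
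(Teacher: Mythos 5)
Your proposal is correct and follows essentially the same route as the paper: the $h$-factor is absorbed into a Carleson measure bounded by $[h]_{BMO}$ via the duality lemma and Corollary~\ref{cor4}, the remaining cone integral is split into a cone maximal function times a cone square function (both built from the homogeneous kernels of \eqref{phit}), and H\"older's inequality with $\tfrac{1}{p}+\tfrac{1}{q}=1$ finishes the argument. The only (immaterial) difference is that in the first inequality the paper assigns the maximal function to the $F_{\alpha}$-factor and the square function to the $G_{\alpha}$-factor, whereas you make the symmetric choice; your accounting of where $s<1$ and $s<2\alpha$ enter (decay of the kernels after applying $(-\Delta_b)^{-s/2}$) is accurate and in fact more explicit than the paper's.
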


\begin{proof}
Let us again start by proving the first claim. Indeed, using Corollary \ref{cor4} we have that
\begin{align}
&\int_{\R^{+}\times \G}t^{2+2(1-s)}|\tilde{\nabla}H_{\alpha}||\nabla_{\G}\tilde{\nabla}F_{\alpha}||\tilde{\nabla}G_{\alpha}|\frac{\dd x\dd t}{t}\\
&\qquad \lesssim[h]_{BMO}\int_{\G}\Big(\int_{|y^{-1}x|<t}\Big(t^{1+2(1-s)}|\nabla_{\G}\tilde{\nabla}F_{\alpha}||\tilde{\nabla}G_{\alpha}|\Big)^{2}\frac{\dd t}{t^{Q+1}}\Big)^{\frac{1}{2}}\dd x\notag\\
&\qquad \lesssim[h]_{BMO}\int_{\G}\mathcal{M}((-\Delta_{b})^{\frac{s}{2}}f)(x)S_{\phi}^{1}((-\Delta_{b})^{\frac{s}{2}}g)(x)\dd x\notag\\
&\qquad \lesssim [h]_{BMO}\|(-\Delta_{b})^{\frac{s}{2}}f\|_{L^{p}}\|(-\Delta_{b})^{\frac{s}{2}}g\|_{L^{q}}\notag
\end{align}
A similar proof holds for the second claim.
\end{proof}

\subsection{Three Terms Commutator}

Let $u,v\in \mathcal{S}(\G)$. Then we define the three commutator $\mathcal{H}_{\alpha}(u,v)$ by:
$$\mathcal{H}_{\alpha}(u,v):=(-\Delta_{b})^{\alpha}(uv)-u(-\Delta_{b})^{\alpha}v-v(-\Delta_{b})^{\alpha}u.$$
This commutator was studied in the Euclidean setting in \cite{lenz,Schik} and in the case of Carnot groups in \cite{M}. We want also to point out that one can obtain easy bounds for this commutator in Besov spaces. Indeed

\begin{proposition}
Let $\alpha\in (0,1)$, assume that $\frac{1}{r}=\frac{1}{p_{1}}+\frac{1}{p_{2}}$ and $\frac{1}{q_{1}}+\frac{1}{q_{2}}=1$. We let $s_{1},s_{2}>0$ so that $s_{1}+s_{2}=2\alpha$. Then we have
$$\|\mathcal{H}_{\alpha}(u,v)\|_{L^{r}}\lesssim \|u\|_{\dot{B}_{p_{1},q_{1}}^{s_{1}}}\|v\|_{\dot{B}_{p_{2},q_{2}}^{s_{2}}}.$$
\end{proposition}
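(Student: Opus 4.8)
The plan is to rewrite $\mathcal{H}_{\alpha}(u,v)$ as a single absolutely convergent integral with a symmetric second-difference structure, and then estimate it by Minkowski's inequality followed by two H\"older inequalities.

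First I would note that, since $u,v\in\mathcal{S}(\G)$, we also have $uv\in\mathcal{S}(\G)$, so Theorem \ref{raplap} applies to each of $uv$, $u$ and $v$. Writing the three principal values over the common domain $\G\setminus B(x,\varepsilon)$ and using the pointwise identity
\[
u(y)v(y)-u(x)v(x)-u(x)\bigl(v(y)-v(x)\bigr)-v(x)\bigl(u(y)-u(x)\bigr)=\bigl(u(y)-u(x)\bigr)\bigl(v(y)-v(x)\bigr),
\]
one obtains
\[
\mathcal{H}_{\alpha}(u,v)(x)=\lim_{\varepsilon\to 0^{+}}\int_{\G\setminus B(x,\varepsilon)}\bigl(u(y)-u(x)\bigr)\bigl(v(y)-v(x)\bigr)\tilde{R}_{-2\alpha}(y^{-1}x)\dd y.
\]
By \eqref{sx}, $|\tilde{R}_{-2\alpha}(z)|\approx|z|^{-2\alpha-Q}$; since $u,v$ are smooth, the integrand is $O(|y^{-1}x|^{\,2-2\alpha-Q})$ as $y\to x$, which is locally integrable because $\alpha<1$, and it is $O(|y^{-1}x|^{-2\alpha-Q})$ at infinity. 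Hence the principal value is an ordinary absolutely convergent integral, and the change of variables $z=y^{-1}x$ (Haar measure being bi-invariant on the nilpotent group $\G$) gives
\[
\bigl|\mathcal{H}_{\alpha}(u,v)(x)\bigr|\lesssim\int_{\G}\bigl|u(xz^{-1})-u(x)\bigr|\,\bigl|v(xz^{-1})-v(x)\bigr|\,|z|^{-2\alpha-Q}\dd z.
\]

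Next I would take the $L^{r}$ norm in $x$. By Minkowski's integral inequality and then H\"older's inequality in $x$ with $\frac1r=\frac1{p_{1}}+\frac1{p_{2}}$ (applied for each fixed $z$),
\[
\|\mathcal{H}_{\alpha}(u,v)\|_{L^{r}}\lesssim\int_{\G}\bigl\|u(\cdot z^{-1})-u(\cdot)\bigr\|_{L^{p_{1}}}\bigl\|v(\cdot z^{-1})-v(\cdot)\bigr\|_{L^{p_{2}}}\,|z|^{-2\alpha-Q}\dd z.
\]
Since $\G$ is unimodular, $\|w(\cdot z^{-1})-w(\cdot)\|_{L^{p}}=\|w(\cdot z)-w(\cdot)\|_{L^{p}}=:\omega_{p}^{w}(z)$. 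Using $s_{1}+s_{2}=2\alpha$ to split $|z|^{-2\alpha-Q}=|z|^{-s_{1}}|z|^{-s_{2}}|z|^{-Q}$ and applying H\"older's inequality in the measure $|z|^{-Q}\dd z$ with exponents $q_{1},q_{2}$ (where $\frac1{q_1}+\frac1{q_2}=1$) yields
\[
\|\mathcal{H}_{\alpha}(u,v)\|_{L^{r}}\lesssim\Bigl(\int_{\G}\Bigl(\frac{\omega_{p_{1}}^{u}(z)}{|z|^{s_{1}}}\Bigr)^{q_{1}}\frac{\dd z}{|z|^{Q}}\Bigr)^{1/q_{1}}\Bigl(\int_{\G}\Bigl(\frac{\omega_{p_{2}}^{v}(z)}{|z|^{s_{2}}}\Bigr)^{q_{2}}\frac{\dd z}{|z|^{Q}}\Bigr)^{1/q_{2}}=\|u\|_{\dot{B}_{p_{1},q_{1}}^{s_{1}}}\|v\|_{\dot{B}_{p_{2},q_{2}}^{s_{2}}},
\]
with the obvious modification (an $L^{\infty}$ estimate in the last H\"older step and the $\sup$-form of the seminorm) when $q_{1}$ or $q_{2}$ equals $\infty$.

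The only genuinely delicate point is the first step: justifying that the three principal values recombine into a single integrand and that the resulting integral converges absolutely — this is precisely where one uses $\alpha<1$ together with the smoothness (Lipschitz bound) of $u,v$ near the diagonal and their decay at infinity. Once the symmetric form $\bigl(u(y)-u(x)\bigr)\bigl(v(y)-v(x)\bigr)\tilde{R}_{-2\alpha}(y^{-1}x)$ is available, the remainder is a routine application of Minkowski's inequality and two H\"older inequalities, and no Fourier analysis or kernel estimates finer than \eqref{sx} are required.
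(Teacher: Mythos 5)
Your proposal is correct and follows essentially the same route as the paper: both reduce the commutator to the symmetric integral $\int_{\G}[u(xy)-u(x)][v(xy)-v(x)]\tilde{R}_{-2\alpha}(y)\dd y$, then apply Minkowski's integral inequality, H\"older in $x$ with $\tfrac1r=\tfrac1{p_1}+\tfrac1{p_2}$, and H\"older in $y$ with $\tfrac1{q_1}+\tfrac1{q_2}=1$ after splitting $|y|^{-2\alpha}=|y|^{-s_1}|y|^{-s_2}$. The only difference is that you supply the justification of the symmetric representation and its absolute convergence explicitly, whereas the paper simply cites \cite{M} for that identity.
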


\begin{proof}
The proof follows directly from the pointwise expression of the commutator. Using
$$(-\Delta_{b})^{\alpha}u(x)=\int_{\G}(u(x)-u(y))\tilde{R}_{\alpha}(xy^{-1})\dd y,$$
we can write as in \cite{M},
$$\mathcal{H}_{\alpha}(u,v)(x)=\int_{\G}[u(xy)-u(x)][v(xy)-v(x)]\tilde{R}_{\alpha}(y)\dd y.$$
Since $\tilde{R}_{\alpha}\approx |y|^{-Q-2\alpha}$, we have for $\frac{1}{r}=\frac{1}{p_{1}}+\frac{1}{p_{2}}$, $1=\frac{1}{q_{1}}+\frac{1}{q_{2}}$ and $2\alpha=s_{1}+s_{2}$:
\begin{align}
\|\mathcal{H}_{\alpha}(u,v)\|_{L^{r}}&\lesssim \Big(\int_{\G}\Big(\int_{\G}\frac{[u(xy)-u(x)][v(xy)-v(x)]}{|y|^{2\alpha}}\frac{\dd y}{|y|^{Q}}\Big)^{r}\dd x \Big)^{\frac{1}{r}}\notag\\
&\lesssim \int_{\G}\Big(\int_{\G} \Big(\frac{[u(xy)-u(x)][v(xy)-v(x)]}{|y|^{2\alpha}}\Big)^{r}\dd x \Big)^{\frac{1}{r}}\frac{\dd y}{|y|^{Q}}\notag\\
&\lesssim \int_{\G} \frac{1}{|y|^{2\alpha}}\|u(xy)-u(x)\|_{L^{p_{1}}}\|v(xy)-v(x)\|_{L^{p_{2}}}\frac{\dd y}{|y|^{Q}}\notag\\
&\lesssim \|u\|_{\dot{B}_{p_{1},q_{1}}^{s_{1}}}\|v\|_{\dot{B}_{p_{2},q_{2}}^{s_{2}}}\notag
\end{align}
\end{proof}

The case of $L^{p}$ spaces is a little bit more difficult and technical as in \cite{M}. We will see here that for some range of $\alpha$, we can optain a relatively simple proof of some of these bounds and in fact extend the range of the estimates proved in \cite{M} to include a $BMO$ type estimate.

\subsubsection{$L^{p}$-Type Estimates}

\begin{theorem}\label{lp}
Let $\alpha \in (0,\frac{1}{2}]$, then one has
$$\|\mathcal{H}_{\alpha}(u,v)\|_{L^{p}}\lesssim \|(-\Delta_{b})^{\alpha}u\|_{L^{p}}[v]_{BMO}.$$
Moreover, for $\alpha=\alpha_{1}+\alpha_{2}$ with $\alpha_{1},\alpha_{2}\in (0.\frac{1}{2})$ and $\frac{1}{p}=\frac{1}{p_{1}}+\frac{1}{p_{2}}$, one has
$$\|\mathcal{H}_{\alpha}(u,v)\|_{L^{p}}\lesssim \|(-\Delta_{b})^{\alpha_{1}}u\|_{L^{p_{1}}}\|(-\Delta_{b})^{\alpha_{1}}v\|_{L^{p_{2}}}.$$
\end{theorem}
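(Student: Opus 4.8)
The plan is to follow \cite{lenz}: rewrite $\mathcal{H}_\alpha(u,v)$ through the fractional Poisson (Caffarelli--Silvestre) extension and then recognize the resulting trilinear integral as one controlled by the square-function, maximal-function and Carleson estimates of Section~\ref{bounds}. Write $U_\alpha=u*p_\alpha$, $V_\alpha=v*p_\alpha$ and $W_\alpha=(uv)*p_\alpha$ for the extensions, and recall from \cite{Fran} that each solves $L_a w:=\partial_t(t^{1-2\alpha}\partial_t w)+t^{1-2\alpha}\Delta_b w=0$ on $(0,\infty)\times\G$, that $w(t,\cdot)\to w(0,\cdot)$, and that $(-\Delta_b)^{\alpha}f=-c_\alpha\lim_{t\to0^+}t^{1-2\alpha}\partial_t(f*p_\alpha)(t,\cdot)$. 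The Leibniz defect of $L_a$ on a product is
$$L_a(U_\alpha V_\alpha)=U_\alpha L_aV_\alpha+V_\alpha L_aU_\alpha+2t^{1-2\alpha}\,\tilde{\nabla}U_\alpha\cdot\tilde{\nabla}V_\alpha=2t^{1-2\alpha}\,\tilde{\nabla}U_\alpha\cdot\tilde{\nabla}V_\alpha,$$
with $\tilde{\nabla}=\nabla_\G\oplus\partial_t$, so $w:=W_\alpha-U_\alpha V_\alpha$ has $w(0,\cdot)=0$ and $L_a w=-2t^{1-2\alpha}\tilde{\nabla}U_\alpha\cdot\tilde{\nabla}V_\alpha$. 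Applying $-c_\alpha\lim_{t\to0^+}t^{1-2\alpha}\partial_t$ to $W_\alpha$ returns $(-\Delta_b)^{\alpha}(uv)$, and to $U_\alpha V_\alpha$ returns $u(-\Delta_b)^{\alpha}v+v(-\Delta_b)^{\alpha}u$ by the boundary Leibniz rule, so the same operator on $w$ produces exactly $\mathcal{H}_\alpha(u,v)$. Solving the inhomogeneous equation with the Green function for $L_a$ on the half-space, and using that the interior conormal derivative $-c_\alpha\lim_{t\to0^+}t^{1-2\alpha}\partial_t$ of the Green function is the fractional Poisson kernel $p_\alpha$ itself, one arrives at
$$\mathcal{H}_\alpha(u,v)(x)=c_\alpha\int_0^\infty\!\!\int_\G p_\alpha(t,y^{-1}x)\,t^{1-2\alpha}\,\tilde{\nabla}U_\alpha(t,y)\cdot\tilde{\nabla}V_\alpha(t,y)\,\dd y\,\dd t,$$
the integral converging absolutely for $u,v\in\mathcal{S}(\G)$ and $0<\alpha<1$ by the scaling of $p_\alpha$, the asymptotics $t^{1-2\alpha}\partial_tU_\alpha\to-c_\alpha(-\Delta_b)^{\alpha}u$, and Proposition~\ref{stimap}.

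For the bilinear estimate I would test against $h\in\mathcal{S}(\G)$ with $\|h\|_{L^{p'}}\le1$, $\tfrac1{p'}=1-\tfrac1{p_1}-\tfrac1{p_2}$. Since $p_\alpha(t,x)=p_\alpha(t,x^{-1})$, the pairing becomes $\int_0^\infty\!\int_\G H_\alpha(t,y)\,t^{1-2\alpha}\tilde{\nabla}U_\alpha\cdot\tilde{\nabla}V_\alpha\,\dd y\,\dd t$ with $H_\alpha=h*p_\alpha$. Writing $\dd y\,\dd t=t\,\tfrac{\dd y\,\dd t}{t}$ and $t^{2-2\alpha}=t^{1-2\alpha_1}\cdot t^{1-2\alpha_2}$, and replacing $u=(-\Delta_b)^{-\alpha_1}[(-\Delta_b)^{\alpha_1}u]$, $v=(-\Delta_b)^{-\alpha_2}[(-\Delta_b)^{\alpha_2}v]$ (moving the negative power onto the kernel), one gets
$$t^{1-2\alpha_1}\tilde{\nabla}U_\alpha=\phi^{(1)}_t*(-\Delta_b)^{\alpha_1}u,\qquad t^{1-2\alpha_2}\tilde{\nabla}V_\alpha=\phi^{(2)}_t*(-\Delta_b)^{\alpha_2}v,$$
with $\phi^{(i)}_t$ of the form \eqref{phit} with $s=-2\alpha_i$; since $\alpha_1,\alpha_2>0$ we have $-2\alpha_i>-2\alpha$ and $-2\alpha_i>-1$, so both the $\nabla_\G$- and the $\partial_t$-lines of \eqref{phit} are admissible and Proposition~\ref{sg} applies. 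Then $|H_\alpha(t,y)|\le\mathcal{M}^0h(y)$ for $\phi=p_\alpha(1,\cdot)$ (licit by Theorem~\ref{thmb}, as $p_\alpha(1,\cdot)\lesssim(1+|x|)^{-Q-2\alpha}$ with $Q+2\alpha>Q$), a Cauchy--Schwarz in $\tfrac{\dd t}{t}$ (pointwise in $y$, using $|\tilde{\nabla}U_\alpha\cdot\tilde{\nabla}V_\alpha|\le|\tilde{\nabla}U_\alpha|\,|\tilde{\nabla}V_\alpha|$) produces $g_{\phi^{(1)}}((-\Delta_b)^{\alpha_1}u)$ and $g_{\phi^{(2)}}((-\Delta_b)^{\alpha_2}v)$, and H\"older with exponents $(p',p_1,p_2)$ together with Proposition~\ref{sg} and Theorem~\ref{thmb} yields $\big|\int_\G\mathcal{H}_\alpha(u,v)\,h\big|\lesssim\|h\|_{L^{p'}}\|(-\Delta_b)^{\alpha_1}u\|_{L^{p_1}}\|(-\Delta_b)^{\alpha_2}v\|_{L^{p_2}}$; taking the supremum over $h$ gives the second claim.

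The first estimate is the endpoint $p_2=\infty$ of the same scheme, with the $g$-function of $v$ replaced by Carleson control of $BMO$. Keeping $t^{1-2\alpha}\tilde{\nabla}U_\alpha\cdot\tilde{\nabla}V_\alpha=(t^{1-2\alpha}\tilde{\nabla}U_\alpha)\cdot(t\,\tilde{\nabla}V_\alpha)\,t^{-1}$, I would apply Corollary~\ref{cor4} with $F=t\,\tilde{\nabla}V_\alpha$ (Carleson norm $\lesssim[v]_{BMO}$ by Proposition~\ref{bmo}) and $G=(t^{1-2\alpha}\tilde{\nabla}U_\alpha)\,H_\alpha$, pull out the non-tangential maximal function $\mathcal{M}h$ on $\{|y^{-1}x|<t\}$, and recognize the rest as $S_{\phi^{(1)}}((-\Delta_b)^{\alpha}u)$ with $\phi^{(1)}_t=t^{1-2\alpha}\tilde{\nabla}(-\Delta_b)^{-\alpha}p_\alpha$ (i.e.\ \eqref{phit} with $s=-2\alpha$), closing by Theorem~\ref{Oct23} and Theorem~\ref{thmb} to get $\big|\int_\G\mathcal{H}_\alpha(u,v)\,h\big|\lesssim[v]_{BMO}\,\|h\|_{L^{p'}}\|(-\Delta_b)^{\alpha}u\|_{L^p}$. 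The restriction $\alpha\le\tfrac12$ is exactly what makes $S_{\phi^{(1)}}$ available: for $\alpha<\tfrac12$ the $\nabla_\G$-component is in the range of Theorem~\ref{Oct23}, while the $\partial_t$-component sits at the borderline $s=-2\alpha$ of \eqref{phit} but still works because the leading term of the relevant kernel cancels (leaving $O(|x|^{2\alpha-Q-2})$ decay, integrable for $\alpha<1$), or one first integrates by parts in $t$ via $\partial_t(t^{1-2\alpha}\partial_tU_\alpha)=-t^{1-2\alpha}\Delta_bU_\alpha$ to trade $\partial_tU_\alpha\,\partial_tV_\alpha$ for terms covered by the admissible lines of \eqref{phit}; the case $\alpha=\tfrac12$ is the ordinary harmonic extension ($a=0$), handled by $L^p$-boundedness of the Riesz transforms $\nabla_\G(-\Delta_b)^{-1/2}$ plus the same Carleson duality.

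I expect the main obstacle to be the first paragraph: rigorously identifying the degenerate Green function on $(0,\infty)\times\G$, verifying that its interior conormal derivative is $p_\alpha$, and justifying all boundary limits (which requires the sharp $t^{1-2\alpha}\partial_tU_\alpha\to-c_\alpha(-\Delta_b)^{\alpha}u$ asymptotics from \cite{Fran}). Once the representation formula is in hand, the two displayed inequalities are essentially bookkeeping with the machinery of Sections~\ref{bounds}--\ref{applications}, the only genuine analytic point being the borderline behavior at $\alpha=\tfrac12$ (and the companion endpoint $s=-2\alpha$ of \eqref{phit}) discussed above.
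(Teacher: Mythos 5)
Your proposal is correct and follows essentially the same route as the paper: both reduce the dual pairing $\int_{\G}\mathcal{H}_{\alpha}(u,v)h\,\dd x$ to the bulk trilinear integral $\int_{\G\times\R^{+}}t^{1-2\alpha}\,\tilde{\nabla}U_{\alpha}\cdot\tilde{\nabla}V_{\alpha}\,H_{\alpha}\,\dd x\,\dd t$ and then estimate it with the maximal-function, square-function and Carleson machinery of Section~\ref{bounds} (your unpacked estimates are precisely the proofs of Theorems~\ref{thmest1} and~\ref{thmest2}, with the same extra integration by parts in $t$ to handle the borderline exponent at $\alpha=\tfrac12$). The one point where you diverge --- constructing the Green function of $L_a$ on the half-space and identifying its conormal derivative with $p_{\alpha}$ so as to obtain a pointwise representation of $\mathcal{H}_{\alpha}(u,v)$, which you rightly flag as the main obstacle --- is an unnecessary detour: the paper never leaves the dual pairing, writing $\int_{\G}\mathcal{H}_{\alpha}(u,v)h\approx\int_{\G}\int_{0}^{\infty}\partial_{t}\bigl[t^{1-2\alpha}(U_{\alpha}V_{\alpha}\partial_{t}H_{\alpha}-U_{\alpha}H_{\alpha}\partial_{t}V_{\alpha}-V_{\alpha}H_{\alpha}\partial_{t}U_{\alpha})\bigr]\dd t\,\dd x$ and expanding via $\partial_{t}(t^{1-2\alpha}\partial_{t}W)=-t^{1-2\alpha}\Delta_{b}W$, which produces the same trilinear form using only the boundary asymptotics $t^{1-2\alpha}\partial_{t}U_{\alpha}\to c_{\alpha}(-\Delta_{b})^{\alpha}u$ and no Green function at all. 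If you adopt that shortcut, your first paragraph collapses to a two-line computation and the rest of your argument goes through as written.
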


\begin{proof}
We will start first by proving the second claim. We let $h\in L^{p'}$, and we propose to estimate $\int_{\G}\mathcal{H}_{\alpha}(u,v)h\dd x$. Using the fact that 
$$\lim_{t\to 0}\Big(t^{1-2\alpha}\frac{\partial}{\partial t}U_{\alpha}(t,x)\Big)=c_{\alpha}(-\Delta_{b})^{\alpha}u(x)$$
and that
$$\frac{\partial}{\partial t}\Big(t^{1-2\alpha}\frac{\partial}{\partial t}U_{\alpha}\Big)=-t^{1-2\alpha}\Delta_{b}U_{\alpha}$$
we have that
\begin{align}
\left| \int_{\G}\mathcal{H}_{\alpha}(u,v)h\dd x\right|&\approx \left|\int_{\G}\int_{0}^{\infty}\partial_{t}\Big[ t^{1-2\alpha}(U_{\alpha}V_{\alpha}\partial_{t}H_{\alpha}-U_{\alpha}H_{\alpha}\partial_{t}V_{\alpha}-V_{\alpha}H_{\alpha}\partial_{t}U)\Big]\dd t\dd x\right|\notag\\
&=\left|\int_{\G\times \R^{+}}t^{1-2\alpha}\Big[2\partial_{t}U_{\alpha}\partial_{t}V_{\alpha}+\nabla_{\G}V_{\alpha}\nabla_{\G}U_{\alpha}\Big]H_{\alpha}\dd x\dd t\right|\notag\\
&\lesssim \int_{\G\times \R^{+}}t^{2-2\alpha}|\tilde{\nabla}U_{\alpha}||\tilde{\nabla}V_{\alpha}||H_{\alpha}|\frac{\dd x\dd t}{t}.\notag
\end{align}
Now using Theorem \ref{thmest1}, we have that
$$\left|\int_{\G}\mathcal{H}_{\alpha}(u,v)h\dd x\right|\lesssim \|(-\Delta_{b})^{\alpha_{1}}u\|_{L^{p_{1}}}\|(-\Delta_{b})^{\alpha_{2}}v\|_{L^{p_{2}}}\|h\|_{L^{p'}}.$$
Notice that this also provides the proof of the first claim for $\alpha<\frac{1}{2}$ using Theorem \ref{thmest2}. It remains thus to treat the case $\alpha=\frac{1}{2}$. In this case, we have after another integration by parts
\begin{align}
\left|\int_{\G}\mathcal{H}_{\alpha}(u,v)h\dd x\right|&\approx \left|\int_{\G\times \R^{+}}\Big[2\partial_{t}U_{\alpha}\partial_{t}V_{\alpha}+\nabla_{\G}V_{\alpha}\nabla_{\G}U_{\alpha}\Big]H_{\alpha}\dd x\dd t\right|\notag\\
&=\left|\int_{\G\times \R^{+}}t\frac{\partial}{\partial t}\Big[(2\partial_{t}U_{\alpha}\partial_{t}V_{\alpha}+\nabla_{\G}V_{\alpha}\nabla_{\G}U_{\alpha})H_{\alpha}\Big]\dd x\dd t\right|\notag\\
&\lesssim \int_{\G\times \R^{+}}t\Big[|\tilde{\nabla}U_{\alpha}||\tilde{\nabla}V_{\alpha}||\tilde{\nabla}H_{\alpha}|+|\tilde{\nabla}V_{\alpha}||\tilde{\nabla}\nabla_{\G} U_{\alpha}||H_{\alpha}|\Big]\dd x\dd t.\notag
\end{align}
Next writing $t^{2}=tt^{0}t$ for the first term and $t^{2}=tt^{2-2(\frac{1}{2})}$ we get
$$\left|\int_{\G}\mathcal{H}_{\alpha}(u,v)h\dd x\right|\lesssim [v]_{BMO}\|h\|_{L^{p'}}\|(-\Delta_{b})^{\frac{1}{2}}u\|_{L^{p}}.$$
\end{proof}

\subsubsection{Riviere-Da Lio Three Term Commutator}

\begin{theorem}\label{riv}
If $2\alpha\leq 1$ we have
$$\|(-\Delta_{b})^{\alpha}\mathcal{H}_{\alpha}(u,v)\|_{H^{1}}\lesssim \|(-\Delta_{b})^{\alpha}u\|_{L^{p}}\|(-\Delta_{b})^{\alpha}u\|_{L^{p'}}.$$
\end{theorem}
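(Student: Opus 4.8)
The plan is to combine $H^{1}$--$BMO$ duality (in the form of the $H^{1}$--$\mathrm{VMO}$ duality) with the Caffarelli--Silvestre extension representation of $\mathcal{H}_{\alpha}$ already exploited in the proof of Theorem~\ref{lp}. Since for $u,v\in\mathcal{S}(\G)$ the distribution $(-\Delta_{b})^{\alpha}\mathcal{H}_{\alpha}(u,v)$ is tempered, has vanishing integral and enough decay, and since $\mathcal{S}(\G)$ is dense in $\mathrm{VMO}(\G)$ with $\mathrm{VMO}(\G)^{*}=H^{1}(\G)$, it suffices to prove
\[
\Big|\int_{\G}(-\Delta_{b})^{\alpha}\mathcal{H}_{\alpha}(u,v)\,h\,\dd x\Big|\lesssim[h]_{BMO}\,\|(-\Delta_{b})^{\alpha}u\|_{L^{p}}\,\|(-\Delta_{b})^{\alpha}v\|_{L^{p'}}
\]
for all $h\in\mathcal{S}(\G)$. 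First I would move $(-\Delta_{b})^{\alpha}$ onto $h$ by self-adjointness, obtaining $\langle\mathcal{H}_{\alpha}(u,v),(-\Delta_{b})^{\alpha}h\rangle$, and then apply the extension identity from the proof of Theorem~\ref{lp} with test function $(-\Delta_{b})^{\alpha}h$, whose fractional Poisson extension is $(-\Delta_{b})^{\alpha}H_{\alpha}$ with $H_{\alpha}=h*p_{\alpha}$; writing $U_{\alpha}=u*p_{\alpha}$, $V_{\alpha}=v*p_{\alpha}$, the same integrations by parts in $t$ and in $x$ give
\[
\langle\mathcal{H}_{\alpha}(u,v),(-\Delta_{b})^{\alpha}h\rangle\approx\int_{\R^{+}\times\G}t^{1-2\alpha}\big[\,2\,\partial_{t}U_{\alpha}\,\partial_{t}V_{\alpha}+\nabla_{\G}U_{\alpha}\cdot\nabla_{\G}V_{\alpha}\,\big]\,(-\Delta_{b})^{\alpha}H_{\alpha}\,\dd x\,\dd t .
\]

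Next I would recast this as a Carleson-measure/area-function pairing. Writing $t^{1-2\alpha}\,\dd t=t^{2\alpha}\cdot t^{2-4\alpha}\,\tfrac{\dd t}{t}$ and setting $G(t,x)=t^{2-4\alpha}\big[2\,\partial_{t}U_{\alpha}\,\partial_{t}V_{\alpha}+\nabla_{\G}U_{\alpha}\cdot\nabla_{\G}V_{\alpha}\big]$, the integral becomes $\int\!\!\int[t^{2\alpha}(-\Delta_{b})^{\alpha}H_{\alpha}]\,G\,\tfrac{\dd t}{t}\,\dd x$. Since $t^{s}(-\Delta_{b})^{s/2}H_{\alpha}$ with $s=2\alpha$ is exactly one of the quantities covered by Corollary~\ref{cor4} (applied with $f=h$), that corollary bounds it by $[h]_{BMO}\int_{\G}\big(\int_{|y^{-1}x|<t}|G(t,y)|^{2}\,t^{-Q-1}\,\dd t\,\dd y\big)^{1/2}\dd x$, so it remains to estimate this $L^{1}$ norm of the area function $SG$. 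Using $2\alpha\le1$, hence $1-2\alpha\ge0$, I would split $t^{2-4\alpha}=t^{1-2\alpha}\cdot t^{1-2\alpha}$ and distribute the two factors; by the homogeneity of $p_{\alpha}$ one has $t^{1-2\alpha}\tilde{\nabla}U_{\alpha}(t,\cdot)=\phi_{t}*(-\Delta_{b})^{\alpha}u$ and $t^{1-2\alpha}\tilde{\nabla}V_{\alpha}(t,\cdot)=\phi_{t}*(-\Delta_{b})^{\alpha}v$, where $\phi=\tilde{\nabla}(-\Delta_{b})^{-\alpha}p_{\alpha}(1,\cdot)$ is a combination of the kernels listed in \eqref{phit}. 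Bounding the area integral of the product pointwise by the nontangential maximal function of the $u$-factor times the area function of the $v$-factor, applying Hölder's inequality in $x$ with exponents $p$ and $p'$, and invoking Theorem~\ref{thmb} and Proposition~\ref{sg}, gives $\|SG\|_{L^{1}}\lesssim\|(-\Delta_{b})^{\alpha}u\|_{L^{p}}\|(-\Delta_{b})^{\alpha}v\|_{L^{p'}}$. Chaining the three steps proves the displayed duality bound and hence the theorem.

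The main obstacle is the behaviour at the boundary of the admissible range: the $\partial_{t}$-type pieces correspond to the endpoint $s=-2\alpha$ of the list \eqref{phit}, and the full endpoint $2\alpha=1$ makes $\phi=\tilde{\nabla}(-\Delta_{b})^{-1/2}p_{\alpha}(1,\cdot)$ decay only like $|x|^{-Q}$, which is exactly critical for the maximal and square-function bounds. I would resolve this as in the case $\alpha=\tfrac12$ of Theorem~\ref{lp}: perform one further integration by parts in $t$ in the extension identity and use the extension equation $\partial_{t}(t^{1-2\alpha}\partial_{t}W_{\alpha})=-t^{1-2\alpha}\Delta_{b}W_{\alpha}$ to trade a $\partial_{t}^{2}$ for a $\Delta_{b}$ plus a lower-order term, producing expressions with a second-order derivative on $U_{\alpha}$ and a zeroth- or first-order derivative on $H_{\alpha}$; these fall into the strict ranges of Theorems~\ref{Oct23}, \ref{thmest1} and \ref{thmest2} and are estimated by a direct appeal to those results. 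The remaining routine points are the justification of the duality step and the mean-zero and decay properties of the auxiliary kernels, which follow from the spectral-multiplier description in Lemma~\ref{lemmaoct22} (the relevant symbols vanish at the spectral origin), from the total-mass normalisation of $p_{\alpha}$, and from the pointwise bounds in Proposition~\ref{stimap} and Lemma~\ref{lem4.2}.
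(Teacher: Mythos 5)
Your proposal follows essentially the same route as the paper's proof: $H^{1}$--BMO duality, the fractional Poisson extension identity already used for Theorem \ref{lp} with test function $(-\Delta_{b})^{\alpha}h$, the Carleson-measure/area-function pairing of Corollary \ref{cor4} (which is exactly what Theorem \ref{thmest2} packages), and further integrations by parts via the extension equation $\partial_{t}(t^{1-2\alpha}\partial_{t}W_{\alpha})=-t^{1-2\alpha}\Delta_{b}W_{\alpha}$ to regularize the endpoint pieces $t^{1-2\alpha}\partial_{t}U_{\alpha}\,\partial_{t}V_{\alpha}$ and the borderline case $2\alpha=1$. The only small discrepancy is quantitative: a single further integration by parts still leaves the residual term $t^{2-2\alpha}\partial_{t}U_{\alpha}\,\partial_{t}V_{\alpha}\,\partial_{t}G_{\alpha}$, which remains at the endpoint and forces one to iterate the same trick once more, exactly as the paper does.
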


\begin{proof}
Here we will use the duality between $H^{1}$ and BMO (see \cite{FS}). 

\vspace{.5cm}

\textbf{Case $2\alpha<1$:} Let $h\in \mathcal{S}(\G)$ and $g=(-\Delta_{b})^{\alpha}h$. Then we have
\begin{align}
\left|\int_{\G}\mathcal{H}_{\alpha}(u,v)g \dd x\right|&=\left|\int_{\G}uv(-\Delta_{b})^{\alpha}g-ug(-\Delta_{b})^{\alpha}v-vg(-\Delta_{b})^{\alpha}u\dd x\right|\notag\\
&=\left|\int_{\G\times \R^{+}}\partial_{t}(t^{1-2\alpha}(U_{\alpha}V_{\alpha}\partial_{t}G_{\alpha}-U_{\alpha}G_{\alpha}\partial_{t}V_{\alpha}-V_{\alpha}G_{\alpha}\partial_{t}U_{\alpha}))\dd t\dd x\right|\notag\\
&\lesssim \int_{\G\times \R^{+}}t^{1-2\alpha}|\nabla_{\G} U_{\alpha}||\nabla_{\G} V_{\alpha}||G_{\alpha}|\dd t\dd x\notag\\
&\qquad +\left|\int_{\G\times \R^{+}}t^{1-2\alpha}[\partial_{t}U_{\alpha}\partial_{t}V_{\alpha}G_{\alpha}]\dd t\dd x\right|.\notag
\end{align}
We write the first term as $t^{1-2\alpha}|\nabla_{\G} U_{\alpha}|t^{1-2\alpha}|\nabla_{\G} V_{\alpha}|t^{2\alpha}|G_{\alpha}|\frac{1}{t}$ to get, using Theorem \ref{thmest2}, an estimate of the form
$$\int_{\G\times \R^{+}}t^{1-2\alpha}|\nabla_{\G} U_{\alpha}||\nabla_{\G} V_{\alpha}||G_{\alpha}|\dd t\dd x\lesssim [g]_{BMO}\|(-\Delta_{b})^{\alpha}u\|_{L^{p}}\|(-\Delta_{b})^{\alpha}v\|_{L^{q}}.$$
The second term is a little more involved as in the proof of Theorem \ref{lp}, an extra integrations by parts is needed. Indeed,
\begin{align}
\int_{\G\times \R}t^{1-2\alpha}[\partial_{t}U_{\alpha}\partial_{t}V_{\alpha}G_{\alpha}]&=\frac{1}{2\alpha}\int_{\G\times \R^{+}}t^{2\alpha}\partial_{t}(t^{1-2\alpha}\partial_{t}U_{\alpha}t^{1-2\alpha}\partial_{t}V_{\alpha}G_{\alpha})\dd t\dd x\notag\\
&=\frac{-1}{2\alpha}\int_{\G\times\R^{+}}t^{2-2\alpha}(\Delta_{b}U_{\alpha}\partial_{t}V_{\alpha}G_{\alpha}+\Delta_{b}V_{\alpha}U_{\alpha}G_{\alpha}\dd t\dd x\notag\\
&\qquad+\frac{1}{2\alpha}\int_{\G\times \R^{+}}t^{2-2\alpha}\partial_{t}U_{\alpha}\partial_{t}V_{\alpha}\partial_{t}G_{\alpha}\dd t\dd x\notag\\
&=\frac{-1}{2\alpha}\int_{\G\times\R^{+}}t^{2-2\alpha}(\Delta_{b}U_{\alpha}\partial_{t}V_{\alpha}G_{\alpha}+\Delta_{b}V_{\alpha}U_{\alpha}G_{\alpha}\dd t\dd x\notag\\
&\qquad+\frac{1}{8\alpha}\int_{\G\times \R^{+}}t^{4\alpha}\partial_{t}(t^{1-2\alpha}\partial_{t}U_{\alpha}t^{1-2\alpha}\partial_{t}V_{\alpha}t^{1-2\alpha}\partial_{t}G_{\alpha})\dd x\dd t.\notag
\end{align}
Now the first two terms can be easily bounded by the desired quantity. It remains to bound the last one.
\begin{align}
&\int_{\G\times \R^{+}}t^{4\alpha}\partial_{t}(t^{1-2\alpha}\partial_{t}U_{\alpha}t^{1-2\alpha}\partial_{t}V_{\alpha}t^{1-2\alpha}\partial_{t}G_{\alpha})\dd x\dd t\notag\\
&=-\int_{\G\times\R^{+}}t^{3-2\alpha}\Big[
(\Delta_{b}U_{\alpha}\partial_{t}V_{\alpha}+\Delta_{b}V_{\alpha}\partial_{t}U_{\alpha})\partial_{t}G_{\alpha}-(\partial_{t}\nabla_{\G}U_{\alpha}\partial_{t}V_{\alpha}+\partial_{t}\nabla_{\G}V_{\alpha}\partial_{t}U_{\alpha})\nabla_{\G}G_{\alpha}\Big]\dd t\dd x.\notag
\end{align}
Again all the terms here have the right form of Theorems \ref{thmest1} and \ref{thmest2} and they provide the desired bound.

\vspace{.5cm}

\textbf{Case $2\alpha=1$:}
In this case, we have
\begin{align}
|\int_{\G}\mathcal{H}_{\frac{1}{2}}(u,v)g\dd x|&=|\int_{\G\times \R^{+}}\partial_{t}\Big(U_{\frac{1}{2}}V_{\frac{1}{2}}(-\Delta_{b})H_{\frac{1}{2}}-\partial_{t}U_{\frac{1}{2}}V_{\frac{1}{2}}\partial_{t}H_{\frac{1}{2}}-\partial_{t}V_{\frac{1}{2}}U_{\frac{1}{2}}\partial_{t}H_{\frac{1}{2}})\dd t\dd x|\notag\\
&=|\int_{\G\times \R^{+}}t\partial_{t}((U_{\frac{1}{2}}V_{\frac{1}{2}})(-\Delta_{b})H_{\frac{1}{2}}-(U_{\frac{1}{2}}V_{\frac{1}{2}})_{tt}\partial_{t}H_{\frac{1}{2}})\dd t\dd x\notag\\
&=|\int_{\G\times \R^{+}}t\partial_{t}(\tilde{\Delta_{b}}(U_{\frac{1}{2}}V_{\frac{1}{2}})\partial_{t}H_{\frac{1}{2}})\dd t\dd x\notag\\
&=2|\int_{\G\times \R^{+}}t\partial_{t}(\tilde{\nabla}U_{\frac{1}{2}}\tilde{\nabla}V_{\frac{1}{2}}\partial_{t}H_{\frac{1}{2}})\dd t\dd x\notag\\
&\lesssim \int_{\G\times \R^{+}}t|\partial_{t}(\tilde{\nabla}U_{\frac{1}{2}}\tilde{\nabla}V_{\frac{1}{2}})||\partial_{t}H_{\frac{1}{2}})|\dd t\dd x\notag\\
&\qquad+\int_{\G\times \R^{+}}t|\nabla_{\G}(\tilde{\nabla}U_{\frac{1}{2}}\tilde{\nabla}V_{\frac{1}{2}})||\nabla_{\G}H_{\frac{1}{2}}|\dd t\dd x,\notag
\end{align}
where here $\tilde{\Delta_{b}}=\Delta_{b}+\partial_{tt}$ and we used the harmonicity of the extension. Notice here that we can finish the proof as in the previous case.
\end{proof}

Notice that one can also capture the $(L^{p},L^{q})\to L^{r}$ type estimates in \cite{M} by slightly modifying the proof and using the $L^{p}$ estimates for the Riesz potential.

\subsection{Chanillo Type Commutator}
We recall here the commutator estimate proved by Chanillo \cite{Chan}:
$$\|[I_{s},v]u\|_{L^{p}}\lesssim \|u\|_{L^{q}}[v]_{BMO}.$$
Notice that
$$[I_{s},v]u=I_{s}(uv)-vI_{s}(u).$$
Therefore, if we set $u=(-\Delta_{b})^{\frac{s}{2}}a$, we have
$$\int_{\G}(-\Delta_{b})^{\frac{s}{2}}([I_{s},v]u)h=\int_{\G}v[(-\Delta_{b})^{\frac{s}{2}}a-a(-\Delta_{b})^{\frac{s}{2}}h]\dd x.$$
So we propose to estimate an integral of the form
$$\int_{\G}v[(-\Delta_{b})^{\frac{s}{2}}u-u(-\Delta_{b})^{\frac{s}{2}}h]\dd x.$$

\begin{theorem}\label{chan}
For $\frac{1}{p}+\frac{1}{r}-\frac{s}{Q}=1$, we have
$$\left|\int_{\G}v[(-\Delta_{b})^{\frac{s}{2}}u-u(-\Delta_{b})^{\frac{s}{2}}h]\dd x\right|\lesssim [v]_{BMO}\|(-\Delta_{b})^{\frac{s}{2}}u\|_{L^{p}}\|\|(-\Delta_{b})^{\frac{s}{2}}h\|_{L^{r}}.$$

\end{theorem}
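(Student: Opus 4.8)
The plan is to mirror the extension-plus-duality scheme already used for Theorems \ref{lp}, \ref{riv}, \ref{thmest1} and \ref{thmest2}, and to absorb the gain of derivatives encoded in the hypothesis $\tfrac1p+\tfrac1r-\tfrac sQ=1$ by a single application of the Riesz potential bound \eqref{stimaR} at the very end. Throughout I take $u,h\in\mathcal S(\G)$ and $v\in BMO(\G)$, the general case of $v$ following by approximation. Write $\alpha=s/2$ (so $0<\alpha<1$) and let $U=u*p_\alpha$, $V=v*p_\alpha$, $H=h*p_\alpha$ be the fractional Poisson extensions as in \eqref{defPo}. Denoting the left-hand side by $J$, the reduction preceding the statement puts it in the form $J=\int_\G v\big[h(-\Delta_b)^{s/2}u-u(-\Delta_b)^{s/2}h\big]\,\dd x$. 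Using $U(t,\cdot)\to u$ and $t^{1-2\alpha}\partial_t U(t,\cdot)\to c_\alpha(-\Delta_b)^{s/2}u$ as $t\to0^+$ (and likewise for $h$), the equation $\partial_t(t^{1-2\alpha}\partial_t p_\alpha)+t^{1-2\alpha}\Delta_b p_\alpha=0$, and the decay at $t\to\infty$ of $U,H,V$ and their first derivatives supplied by Proposition \ref{stimap} and Lemma \ref{lem4.2}, an integration by parts in $t$ — exactly as in the proofs of Theorems \ref{lp} and \ref{riv} — gives, up to a harmless constant,
\[
J=\int_{\R^+\times\G}\partial_t\!\Big[t^{1-2\alpha}V\big(H\,\partial_t U-U\,\partial_t H\big)\Big]\,\dd x\,\dd t .
\]
Expanding the $t$-derivative (the two $\partial_t H\,\partial_t U$ terms cancel, and $\partial_t(t^{1-2\alpha}\partial_t U)=-t^{1-2\alpha}\Delta_b U$) and then moving $\Delta_b$ off $U,H$ by integration by parts on $\G$ (the $\nabla_\G H\cdot\nabla_\G U$ terms cancel), $J$ becomes a finite sum of terms of the two shapes $\int_{\R^+\times\G}t^{1-2\alpha}(\partial_t V)\,W\,(\partial_t W')\,\dd x\,\dd t$ and $\int_{\R^+\times\G}t^{1-2\alpha}(\nabla_\G V)\cdot W\,(\nabla_\G W')\,\dd x\,\dd t$, where $\{W,W'\}=\{U,H\}$ are the extensions of $\{w,w'\}=\{u,h\}$; note that $u$ and $h$ exchange the roles of ``undifferentiated'' factor $W$ and ``differentiated'' factor $W'$ between the terms.

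For a generic term I would write $t^{1-2\alpha}\,\dd x\,\dd t=t^{2-s}\,\tfrac{\dd x\,\dd t}{t}$ and split $t^{2-s}=t\cdot t^{1-s}$, putting $t$ on $\tilde\nabla V$ and $t^{1-s}$ on $\tilde\nabla W'$. Since, by Proposition \ref{bmo}, $t\,\tilde\nabla V$ has Carleson norm controlled by $[v]_{BMO}$, Corollary \ref{cor4} bounds the term by
\[
[v]_{BMO}\int_\G\Big(\int_{|y^{-1}x|<t}\big|W(t,y)\,t^{1-s}\tilde\nabla W'(t,y)\big|^2\,\frac{\dd t}{t^{Q+1}}\,\dd y\Big)^{1/2}\dd x .
\]
Next I would estimate $|W(t,y)|\le \mathcal M_{p_\alpha}w(x)$ inside the cone over $x$ — legitimate by Theorem \ref{thmb}, since $|p_\alpha(1,\cdot)|\lesssim(1+|\cdot|)^{-(Q+2\alpha)}$ by Lemma \ref{lem4.2} — pull $\mathcal M_{p_\alpha}w(x)$ out, and recognise the remaining cone integral: by the scaling of $p_\alpha$ one has $t^{1-s}\nabla_\G p_\alpha(t,\cdot)=\chi_t$ with $\chi=\nabla_\G(-\Delta_b)^{-s/2}p_\alpha(1,\cdot)$, hence $t^{1-s}\nabla_\G W'(t,\cdot)=\big((-\Delta_b)^{s/2}w'\big)*\chi_t$, and $\chi$ is of the type \eqref{phit} with parameter $-s$ (valid for $s<1$), so Proposition \ref{sg} gives
\[
\Big\|\Big(\int_{|y^{-1}x|<t}\big|t^{1-s}\nabla_\G W'(t,y)\big|^2\,\tfrac{\dd t}{t^{Q+1}}\,\dd y\Big)^{1/2}\Big\|_{L^{a}}\lesssim\big\|(-\Delta_b)^{s/2}w'\big\|_{L^{a}},\qquad 1<a<\infty,
\]
and analogously for the $\partial_t$-version (with $t\,\partial_t V$ again a Carleson function by Proposition \ref{bmo}). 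H\"older with exponents $a,a'$ and then \eqref{stimaR}, which upgrades $\|\mathcal M_{p_\alpha}w\|_{L^{a'}}\lesssim\|w\|_{L^{a'}}$ (Theorem \ref{thmb}) to $\|w\|_{L^{a'}}=\|R_s(-\Delta_b)^{s/2}w\|_{L^{a'}}\lesssim\|(-\Delta_b)^{s/2}w\|_{L^{a''}}$ with $\tfrac1{a'}=\tfrac1{a''}-\tfrac sQ$, bound the term by $[v]_{BMO}\|(-\Delta_b)^{s/2}u\|_{L^p}\|(-\Delta_b)^{s/2}h\|_{L^r}$ once $\{a'',a\}=\{p,r\}$; the compatibility $\tfrac1{a'}+\tfrac1a=1$ then reads exactly $\tfrac1p+\tfrac1r-\tfrac sQ=1$, which is precisely the standing hypothesis.

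The main obstacle is not a single deep step but the bookkeeping: one must (i) justify that the boundary contributions at $t=0$ and $t=\infty$ in the integration by parts vanish, via the pointwise bounds of Proposition \ref{stimap}, Lemma \ref{lem4.2}, and the decay of the semigroup; (ii) push through the scaling identities so that each cone square function falls under Theorem \ref{Oct23}/Proposition \ref{sg}, which confines the clean argument to $s\in(0,1)$ through the $\nabla_\G$-terms (treating $1\le s<2$ would instead require the higher-order square functions in \eqref{phit}, where the range restrictions become delicate); and (iii) track the finitely many terms and the swap of $u\leftrightarrow h$ so that both factors are normed in the correct spaces. Summing the resulting estimates then yields the asserted bound.
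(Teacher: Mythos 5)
Your overall strategy is the same as the paper's: integrate by parts in $t$ using $\partial_t(t^{1-s}\partial_t P)=-t^{1-s}\Delta_b P$ for the extensions with parameter $\alpha=s/2$, reduce to terms of the form $t^{1-s}(\tilde\nabla V)\,W\,(\tilde\nabla W')$, and then run Carleson-measure duality (Corollary \ref{cor4}) against a maximal function times a cone square function, closing with H\"older, Proposition \ref{sg} and the Riesz potential bound \eqref{stimaR}. For the $\nabla_\G$-terms this is exactly what the paper does (packaged as an appeal to Theorem \ref{thmest2} followed by the Sobolev embedding $\frac1r=\frac1q+\frac sQ$), and your bookkeeping of the exponents correctly recovers the hypothesis $\frac1p+\frac1r-\frac sQ=1$.

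There is, however, a genuine gap in your treatment of the $\partial_t$-terms, which you dismiss with ``analogously for the $\partial_t$-version.'' After splitting $t^{2-s}=t\cdot t^{1-s}$, the square-function factor you need is built from $\phi_t=t^{1-s}(-\Delta_b)^{-s/2}\partial_t p_{s/2}(t,\cdot)$, i.e.\ the second line of \eqref{phit} with parameter $s'=-s$. That line requires $s'>-2\alpha$; since the extension parameter is $\alpha=s/2$, this reads $-s>-s$, which fails with equality. The same borderline obstruction appears in Theorem \ref{Oct23} and Theorem \ref{thmest2}, whose $\partial_t$-estimates are stated only for $s<2\alpha$, never for $s=2\alpha$. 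So Proposition \ref{sg} does not apply to $t^{1-s}\partial_t W'$ here, and the kernel $K=\int_0^\infty\phi_t*\phi_t\,\frac{\dd t}{t}$ of Proposition \ref{propphit} would not even converge at $t=0$ in this endpoint case. The paper flags exactly this (``the second term \dots\ cannot be bounded directly since we are in the case $s=2\alpha$'') and resolves it by performing one more integration by parts in $t$, writing $t^{1-s}\partial_t U\,H-t^{1-s}U\,\partial_t H$ as $t^{s}\,\partial_t(\cdots)$ against $t^{1-s}\partial_t V$, which trades the borderline $\partial_t$-kernels for $\Delta_b$- and $\nabla_\G$-kernels carrying an extra power of $t$ that do lie strictly inside the admissible ranges of \eqref{phit}. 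Your proof needs this additional step (or an equivalent endpoint argument) to handle the $\partial_t V$-terms; as written, that part of the estimate does not go through.
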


\begin{proof}
Again, we use the same trick, that is we write
\begin{align}
\left|\int_{\G}v[h(-\Delta_{b})^{\frac{s}{2}}u-u(-\Delta_{b})^{\frac{s}{2}}h]\dd x\right|&\approx \left|\int_{\G\times \R^{+}}\partial_{t}\Big[t^{1-s}(\partial_{t}U_{\frac{s}{2}}H_{\frac{s}{2}}-\partial_{t}H_{\frac{s}{2}}U_{\frac{s}{2}})V_{\frac{s}{2}}\Big]\dd t\dd x\right|\notag\\
&\lesssim\left|\int_{\G\times \R^{+}}t^{1-s}(\nabla_{\G}U_{\frac{s}{2}}H_{\frac{s}{2}}-\nabla_{\G}U_{\frac{s}{2}}H_{\frac{s}{2}})\nabla_{\G}V_{\frac{s}{2}}\dd t\dd x\notag \right|\\
&\quad+\left|\int_{\G\times \R^{+}}t^{1-s}(\partial_{t}U_{\frac{s}{2}}H_{\frac{s}{2}}-\partial_{t}U_{\frac{s}{2}}H_{\frac{s}{2}})\partial_{t}V_{\frac{s}{2}}\dd t\dd x\right|\notag
\end{align}
Using Theorem \ref{thmest2}, we have that
\begin{align}
\left|\int_{\G\times \R^{+}}t^{1-s}(\nabla_{\G}U_{\frac{s}{2}}H_{\frac{s}{2}}-\nabla_{\G}U_{\frac{s}{2}}H_{\frac{s}{2}})\nabla_{\G}V_{\frac{s}{2}}\dd t\dd x\right|\lesssim [v]_{BMO}\|(-\Delta_{b})^{\frac{s}{2}}u\|_{L^{p}}\|h\|_{L^{q}}\notag\\
\lesssim [v]_{BMO}\|(-\Delta_{b})^{\frac{s}{2}}u\|_{L^{p}}\|(-\Delta_{b})^{\frac{s}{2}}h\|_{L^{r}}\notag
\end{align}
where the second inequality follows from the sobolev embeddings with $\frac{1}{r}=\frac{1}{q}+\frac{s}{Q}$.\\
The second term on the other hand, cannot be bounded directly since we are in the case $s=2\alpha$. That is why, we perform another integration by parts:
\begin{align}
&\int_{\G\times \R^{+}}t^{1-s}(\partial_{t}U_{\frac{s}{2}}H_{\frac{s}{2}}-\partial_{t}U_{\frac{s}{2}}H_{\frac{s}{2}})\partial_{t}V_{\frac{s}{2}}\dd t\dd x\\
&\qquad =\frac{1}{s}\int_{\G\times \R^{+}}t^{s}\partial_{t}(t^{1-s}(\partial_{f}U_{\frac{s}{2}}H_{\frac{s}{2}}-U_{\frac{s}{2}}\partial_{t}H_{\frac{s}{2}})t^{1-s}\partial_{t}V_{\frac{s}{2}})\dd t\dd x\notag\\
&\qquad =\frac{1}{s}\int_{\G\times \R^{+}}-t^{2-s}(\Delta_{b}U_{\frac{s}{2}}H_{\frac{s}{2}}-U_{\frac{s}{2}}\Delta_{b}H_{\frac{s}{2}})\partial_{t}V_{\frac{s}{2}}\notag\\
&\qquad \qquad-t^{2-s}(\partial_{t}U_{\frac{s}{2}}H_{\frac{s}{2}}-U_{\frac{s}{2}}\partial_{t}H_{\frac{s}{2}})\Delta_{b}V_{\frac{s}{2}}\dd t\dd x\notag\\
&\qquad =-\int_{\G\times \R^{+}}t^{2-s}(\Delta_{b}U_{\frac{s}{2}}H_{\frac{s}{2}}-U_{\frac{s}{2}}\Delta_{b}H_{\frac{s}{2}})\partial_{t}V_{\frac{s}{2}}\notag\\
&\qquad \qquad+t^{2-s}(\partial_{t}\nabla_{\G}U_{\frac{s}{2}}H_{\frac{s}{2}}+\partial_{t}U_{\frac{s}{2}}\nabla_{\G}H_{\frac{s}{2}})\nabla_{\G}V_{\frac{s}{2}}\notag\\
&\qquad \qquad-t^{2-s}(\nabla_{\G}U_{\frac{s}{2}}\partial_{t}H_{\frac{s}{2}}-U_{\frac{s}{2}}\partial_{t}\nabla_{\G}H_{\frac{s}{2}})\nabla_{\G}V_{\frac{s}{2}}\dd t\dd x\notag
\end{align}
The first term can be bounded easily as in Theorem \ref{riv}. For the last two terms, we also have the right bound since $s<1+2\alpha=1+s$.
\end{proof}

\end{document}